\theoremstyle{plain}
\newtheorem{theorem}{Theorem}[section]
\newtheorem{corollary}[theorem]{Corollary}
\newtheorem{lemma}[theorem]{Lemma}
\newtheorem{proposition}[theorem]{Proposition}
\theoremstyle{definition}
\newtheorem{definition}[theorem]{Definition}
\newtheorem{conjecture}[theorem]{Conjecture}
\newtheorem{remark}[theorem]{Remark}
\newtheorem{question}[theorem]{Question}
\newcommand{\CPb}{\overline{\mathbb{CP}}{}^{2}}
\newcommand{\CP}{{\mathbb{CP}}{}^{2}}
\newcommand{\CL}{{\mathbb{CP}}{}^{1}}
\newcommand{\Z}{\mathbb{Z}}
\newcommand{\K}{{\rm K3}}
\newcommand{\Pa}{\partial}
\def\Diff{\operatorname{Diff}}
\def\id{\operatorname{id}}
\def\Crit{\operatorname{Crit}}
\def \x {\times}
\def \eu{{\text{e}}}
\begin{document}

\title[Unchaining surgery and symplectic $4$-manifolds]
{Unchaining surgery\,and \\ topology of symplectic $4$-manifolds}

\author[R. \.{I}. Baykur]{R. \.{I}nan\c{c} Baykur}
\address{Department of Mathematics and Statistics, University of Massachusetts, Amherst, MA 01003-9305, USA}
\email{baykur@math.umass.edu}

\author[K. Hayano]{Kenta Hayano}
\address{Department of Mathematics, Faculty of Science and Technology, Keio University, Yokohama, Kanagawa, 223-8522, Japan}
\email{k-hayano@math.keio.ac.jp}

\author[N. Monden]{Naoyuki Monden}
\address{Department of Mathematics, Faculty of Science, Okayama University, Okayama, Okayama 700-8530, Japan}
\email{n-monden@okayama-u.ac.jp}

\begin{abstract}
We study a symplectic surgery operation we call \emph{unchaining}, which effectively reduces the second Betti number and the symplectic Kodaira \linebreak dimension at the same time. Using unchaining, we give novel constructions of symplectic Calabi-Yau surfaces from complex surfaces of general type, as well as from rational and ruled surfaces via the natural inverse of this operation. Combining the unchaining surgery with others, which all correspond to certain monodromy substitutions for Lefschetz pencils, we provide further applications, such as a complete resolution of a conjecture of Stipsicz on the existence of exceptional sections in Lefschetz fibrations, new constructions of exotic symplectic \mbox{$4$-manifolds},  and inequivalent pencils of the same genera and the same number of base points on families of symplectic $4$--manifolds.  Meanwhile, we give a handy criterion for determining from the monodromy of a pencil whether its total space is spin or not.
\end{abstract}

\maketitle

\tableofcontents

\clearpage

\section{Introduction} 

Over the past couple of decades, many new surgeries, such as rational blowdown, generalized fiber sum, knot surgery, and Luttinger surgery, have been introduced to and successfully employed in the study of symplectic $4$--manifolds. These symplectic surgeries have typically emerged as more flexible versions of complex algebraic operations or more rigid versions of topological ones. A diverse family of symplectic surgeries, many of which seem to have very little history in the complex algebraic or topological worlds, come from excising a compact Stein subdomain and replacing it with a new one, which too induces the same contact structure on its convex boundary. In this article we undertake an extensive study of a symplectic surgery of this kind, which we call \emph{unchaining surgery}, and demonstrate several interesting features and applications of this surgery.

Since Lefschetz pencils and allowable Lefschetz fibrations are topological counterparts of closed symplectic $4$--manifolds and compact Stein domains \cite{Donaldson, GS, LP, AO}, these operations have natural interpretations in the framework of positive factorizations of Dehn twists in the mapping class group of fibers. Swapping Stein subdomains locally correspond to swapping positive factorizations of the monodromy of a supporting open book of the boundary contact $3$--manifold.  Moreover, whenever the Stein subdomain, as a positive allowable Lefschetz fibration, embeds into a Lefschetz pencil on the closed symplectic $4$--manifold, swapping the Stein subdomains has a global interpretation: it corresponds to a monodromy substitution in the global monodromy of the pencil. The unchaining surgery takes its name in this context from an important relation in the mapping class group of a surface: the \emph{chain relation}, which exchanges a product of Dehn twists along an odd number of chain of curves on a compact (sub)surface, with a pair of Dehn twists along the two boundary components of the tubular neighborhood of this chain; see Lemma\ref{T:chain relation}. (And the analogous surgery corresponding to the chain relation for an even number of chains shares very similar features; see Remark~\ref{evenchains}.) 


In all the examples we produce in this article, we will perform unchaining from the vantage point of substitutions in positive factorizations, and aim to preserve the global Lefschetz fibration structures, so we can determine the \emph{Kodaira dimension} of the resulting symplectic $4$--manifolds ---which is a notion that measures the positivity of the symplectic canonical class, extended from the case of compact complex surfaces, and is a diffeomorphism invariant \cite{Li1}. We will read off the symplectic Kodaira dimension using the additional data we derive on their exceptional multisections (symplectic $(-1)$--spheres which intersect all the fibers positively); see Theorem~\ref{KodFromLF} and \cite{Sato, BaykurHayano}. Precise definitions and background results for all of the above are given in Sections~\ref{Sec:preliminaries} and\ref{Sec:unchaining}.

Our first application concerns the topology of \emph{symplectic Calabi-Yau} surfaces, which constitute symplectic $4$--manifolds of Kodaira dimension zero, up to finite covers. Recall that a  symplectic Calabi-Yau surface is a symplectic \mbox{$4$--manifold} with a trivial canonical class, similar to a complex Calabi-Yau surface. Up to date, the only known examples of symplectic Calabi-Yau surfaces are torus bundles over tori and complex $\K$ surfaces, and any symplectic Calabi-Yau surface is known to necessarily have the rational homology type of these manifolds \cite{Li4, Bauer}. It is an alluring open question whether this fairly short list of  symplectic Calabi-Yau surfaces is complete up to diffeomorphisms, and in particular if there is no symplectic Calabi-Yau surfaces with $b_1=0$, that is not diffeomorphic to a $\K$ surface \cite{Li1, Donaldson_SCY, FriedlVidussi}. As Tian-Jun Li points out in \cite{LiNew}, the poor state of this problem seems to also stem from the lack of any new constructions of symplectic Calabi-Yau surfaces; many widely used symplectic surgeries mentioned in the beginning of our article are seen not to yield any new SCYs, except for trivial cases \cite{HoLi, LiNew, UsherKodaira, Dorfmeister2}.

In Section~\ref{Sec:newpencils} we construct, for each $g \geq 3$, a variety of new positive factorizations yielding  genus--$g$ Lefschetz pencils on symplectic $4$--manifolds with any Kodaira dimension, via careful applications of unchaining, starting with holomorphic pencils on complex surfaces of general type; see Theorems~\ref{thm:3} and \ref{T:Kodaira dim X_g(i)}. These constructions demonstrate how unchaining surgery can produce symplectic Calabi-Yau surfaces (Corollary~\ref{SCYK3s}) from complex surfaces of general type, or from rational or ruled surfaces via the natural inverse of the operation.  It is plausible that this symplectic surgery can  be performed as a complex operation in favorable situations, as an inverse to contracting a $(-2)$--curve and then taking a minimal resolution; see Remark~\ref{contracting}. As Simon Donaldson proposes in \cite{DonaldsonMCG}, an analysis of monodromies of pencils on SCYs may shed light on their geometry and topology, and thus, we record our examples with this extra feature:

\begin{theorem} \label{MainThm1}
For each $g \geq 3$,  there is a symplectic genus-$g$ Lefschetz pencil $(K_g, k_g)$, where $K_g$ is a symplectic Calabi-Yau homotopy $\K$ surface, and $k_g$ has the monodromy factorization: 
\begin{align*}
& t_{\delta_{g-1}} \cdots t_{\delta_2} t_{\delta_1} t_{\delta_{g-1}^\prime} \cdots t_{\delta_2^\prime} t_{\delta_1^\prime}  \\
=& \begin{cases}
t_{x_{g}} \cdots t_{x_1} t_{x_{g}^\prime} \cdots  t_{x_1^\prime} (t_{c_1} t_{c_2} t_{c_3})^{8} t_{d_4}  \cdots t_{d_{2g+1}} t_{e_{2g+1}} \cdots  t_{e_4} & (g:\mbox{odd}) \\
t_{x_{g}} \cdots t_{x_1} t_{x_{g}^\prime} \cdots  t_{x_1^\prime} (t_{c_1} t_{c_2} t_{c_3})^{6} (t_{c_3} t_{c_2} t_{c_1})^2 t_{d_4} \cdots t_{d_{2g+1}} t_{e_{2g+1}} \cdots  t_{e_4} & (g:\mbox{even})
\end{cases}
\end{align*}
in  $\Gamma_g^{2g-2}$, where the curves $\delta_j, \delta'_j, c_j, d_j, e_j, x_j,x'_j$ are as in  Figures~\ref{F:curves1}\ref{vanishingcycles}\ref{sectioncurves}. 
\end{theorem}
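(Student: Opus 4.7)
The plan is to prove the theorem as a specialization of the general genus-$g$ pencil construction developed in Section~\ref{Sec:newpencils} (Theorems~\ref{thm:3} and \ref{T:Kodaira dim X_g(i)}), extracting the Kodaira dimension zero case. There are three components: (a) exhibiting the RHS as a positive factorization of the LHS multitwist in $\Gamma_g^{2g-2}$, starting from a holomorphic pencil on a complex surface of general type; (b) computing the invariants of $K_g$ to match a $\K$ surface; and (c) showing $\kappa(K_g) = 0$.

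For (a), I would begin with a known holomorphic genus-$g$ Lefschetz pencil on a complex surface of general type, arranged so that its monodromy factorization contains iterated chain-of-three blocks $(t_{c_1}t_{c_2}t_{c_3})^{4k}$. By the chain relation (Lemma~\ref{T:chain relation}), $(t_{c_1}t_{c_2}t_{c_3})^4 = t_{\alpha_1} t_{\alpha_2}$ where $\alpha_1, \alpha_2$ cobound a regular neighborhood of $c_1 \cup c_2 \cup c_3$, so applying the chain relation (forwards or in reverse, as unchaining) together with Hurwitz moves preserves the product in $\Gamma_g^{2g-2}$ while modifying the positive factorization, thereby altering the total space and its Kodaira dimension in a controlled manner. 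The $t_{x_j}, t_{x_j'}$ vanishing cycles are introduced to promote the Lefschetz fibration to a pencil and to carry exceptional sections, while the $t_{d_j}, t_{e_j}$ complete the factorization so it matches the boundary multitwist. The parity split --- $(t_{c_1}t_{c_2}t_{c_3})^{8}$ for odd $g$ versus $(t_{c_1}t_{c_2}t_{c_3})^6 (t_{c_3}t_{c_2}t_{c_1})^2$ for even $g$ --- reflects a framing correction in the underlying chain factorization that arises because the parity of $g$ changes the structure of the starting holomorphic factorization.

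For (b) and (c), I would compute $\chi(K_g)$ and $\sigma(K_g)$ from the vanishing cycles on the RHS using standard formulas for Lefschetz pencils, matching those of $\K$. Simple connectedness of $K_g$ would follow from van Kampen once I verify that the vanishing cycles $x_j, x_j'$ normally generate $\pi_1$ of a regular fiber. To conclude $\kappa(K_g) = 0$, I would invoke Theorem~\ref{KodFromLF} and read the exceptional $(-1)$-sphere multisections off the $x_j, x_j'$ data in the factorization (as in the framework of \cite{Sato, BaykurHayano}), giving that $K_g$ is a symplectic Calabi-Yau surface; combined with the rational homology $\K$ type from \cite{Li4, Bauer}, this forces $K_g$ to be a homotopy $\K$ surface.

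The main obstacle is the explicit mapping class group identity verifying the factorization. With over $6g$ Dehn twists on the RHS spread across seven families of curves ($\delta_j, \delta_j', c_j, d_j, e_j, x_j, x_j'$) on a genus-$g$ surface with $2g-2$ boundary components, verifying the identity for all $g \geq 3$ requires careful inductive bookkeeping of chain-relation substitutions and Hurwitz moves, and the parity split for odd versus even $g$ adds a nontrivial extra layer that must be handled uniformly. A secondary challenge is confirming that the $x_j, x_j'$ curves genuinely realize $(-1)$-sphere multisections (so that Theorem~\ref{KodFromLF} applies to give $\kappa = 0$), rather than just being abstract vanishing cycles; this needs a precise check against the sphere-section criterion of \cite{BaykurHayano}.
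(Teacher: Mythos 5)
Your overall strategy is the paper's: the pencil $(K_g,k_g)$ is exactly the $i=g-2$ member of the family $(X_g'(i),f_g'(i))$ of Theorem~\ref{thm:3}, built by Hurwitz-manipulating the chain-relation factorization $(t_{c_1}\cdots t_{c_{2g+1}})^{2g+2}=t_{\delta_1}t_{\delta_1'}$ of the holomorphic pencil on the double cover $Z_g'\to\CP$ (Proposition~\ref{lem1}), applying $A_3$- and $A_{2g-3}$-chain substitutions (Lemma~\ref{lem2}), and then reading off $e$, $\sigma$, $\pi_1$ and the Kodaira dimension via Theorem~\ref{KodFromLF}. The parity split is indeed just the bookkeeping fact that $(t_{c_3}t_{c_2}t_{c_1})^4\sim(t_{c_1}t_{c_2}t_{c_3})^4$ lets you absorb reversed triples only four at a time (Lemma~\ref{lem1-3}).

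There is, however, one genuine gap, and it sits precisely where your sketch is vaguest: where the $2g-2$ base points come from. The chain relation only ever hands you a factorization of $t_{\delta_1}t_{\delta_1'}$ in $\Gamma_g^{2}$, i.e.\ a pencil with \emph{two} base points; Lemma~\ref{lem2} still lives in $\Gamma_g^2$, with the curves $a,a'$ appearing with multiplicity $i$. To apply Theorem~\ref{KodFromLF}(2) you need $n=2g-2$ boundary twists on the left-hand side, and the mechanism that produces them is the inductive lifting in the proof of Theorem~\ref{thm:3}: each factor $t_at_{a'}$ is traded, via the lantern relations $t_{\delta_k}t_at_{y_{k-2}}t_{\delta_{k-1}}=t_{z_{k-1}}t_{y_{k-1}}t_{x_{k-1}}$ in the pair of pants bounded by $a,b,\delta_1$ (Figure~\ref{sectioncurves}), for two new boundary twists together with the new vanishing cycles $x_k,x_k'$. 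Your proposal instead describes the $x_j,x_j'$ as curves "introduced to promote the fibration to a pencil and to carry exceptional sections" and later proposes to "read the exceptional $(-1)$-sphere multisections off the $x_j,x_j'$ data." That inverts the roles: the $x_j,x_j'$ are ordinary vanishing cycles, and the exceptional sections correspond to the boundary components $\delta_j,\delta_j'$, i.e.\ to the base points certified by the lift to $\Gamma_g^{2g-2}$. Without this lifting step you are left with a two--base-point pencil, for which Theorem~\ref{KodFromLF} says nothing about $\kappa=0$, so the Calabi--Yau conclusion does not follow. Once the lift is in place, the rest of your plan goes through as in the paper: $e=24$, $\sigma=-16$, $\pi_1=1$ by killing the generators with the vanishing cycles, $b^+=3\neq 1$, hence $\kappa=0$ by Theorem~\ref{KodFromLF}(2), and Freedman (together with spin-ness, Lemma~\ref{T:spin X_g'(i)}) gives the homotopy $\K$ statement.
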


\noindent These complement the explicit monodromies of pencils on symplectic Calabi-Yau surfaces with $b_1>0$ given in the works of Ivan Smith, the first author, and Noriyuki Hamada and the second author in  \cite{SmithTorus, BaykurGenus3, HamadaHayano}. The symplectic $4$--manifolds $K_g$ have the same fundamental group and Seiberg-Witten invariants as complex $\K$ surfaces,  and we do not know at this point whether they are all diffeomorphic to them.

Our second application concerns a riveting conjecture of Andras Stipsicz on the existence of exceptional sections in \emph{fiber-sum indecomposable} Lefschetz fibrations. In \cite{Stipsicz_indecomposability}, having proved the converse, Stipsicz conjectured that any  Lefschetz fibration, which cannot be expressed as a fiber-sum of any two non-trivial fibrations, always admits an exceptional section ---an affirmative answer to which would mean that any Lefschetz fibration is a fiber-sum of blown-up pencils. This conjecture was shown to fail in genus--$2$ by just a handful of examples : a holomorphic fibration on a blown-up $\K$  surface  by Auroux (which was the first counter-example, as observed by Yoshihisa Sato in \cite{Sato2010}), a symplectic fibration on a homotopy Enriques surface by the first two authors of this article \cite{BaykurHayano}, a holomorphic fibration with $6$ irreducible and $7$ reducible fibers by Xiao \cite{Xiao} (as observed to be a counter-example by the first author), and one more (if different than Xiao's), by an implicit argument  in \cite{AkhmedovMonden}. Only two more counter-examples, a pair of genus--$3$ fibrations on symplectic Calabi-Yau homotopy $\K$ surfaces were provided again in \cite{BaykurHayano}, and there has been no known examples for any $g \geq 4$, up to date.

In Section~\ref{Sec:spin}, we prove that there is in fact no stable range for $g$, where this conjecture may hold:

\begin{theorem}\label{MainThm2}
For any $g\geq 2$, there exists a genus--$g$ fiber-sum indecomposable  Lefschetz fibration without any exceptional sections. 
\end{theorem}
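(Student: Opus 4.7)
For $g=2$, we invoke one of the known counterexamples to Stipsicz's conjecture listed in the introduction (for instance, Auroux's holomorphic fibration on a blown-up \K\ surface). For $g \geq 3$, the plan is to produce a genus-$g$ Lefschetz fibration on a simply connected spin symplectic $4$-manifold; since such a manifold has even intersection form and hence contains no $(-1)$-sphere, the absence of exceptional sections is automatic. The hard work is then only to check fiber-sum indecomposability.

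The natural starting point is the pencil $(K_g, k_g)$ of Theorem~\ref{MainThm1}. Its monodromy factorization lives in $\Gamma_g^{2g-2}$ with left hand side equal to the product of the $2g-2$ boundary twists. Capping off all boundary components, both sides become the identity in $\Gamma_g$, yielding a positive factorization of the identity in $\Gamma_g$ and hence a genus-$g$ Lefschetz fibration $\pi_g \colon Y_g \to S^2$ on a closed symplectic $4$-manifold $Y_g$. Since the vanishing cycles in the factorization normally generate $\pi_1$ of a fiber, $Y_g$ is simply connected. Applying the spin criterion proved in Section~\ref{Sec:spin} to this monodromy, we verify that $Y_g$ is spin, so its intersection form is even and $\pi_g$ admits no exceptional section.

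To establish fiber-sum indecomposability, assume $\pi_g = \pi_1 \#_F \pi_2$ for non-trivial genus-$g$ Lefschetz fibrations $\pi_i$ on $X_i$. Combining the additivity $c_1^2(X_1 \#_F X_2) = c_1^2(X_1) + c_1^2(X_2) + 8(g-1)$ with the computation of $c_1^2(Y_g)$ (carried out via Theorem~\ref{KodFromLF} together with the exceptional multisection data inherent to $\pi_g$) produces numerical constraints on the summands. Together with the lower bound $c_1^2 \geq 0$ for relatively minimal genus-$g \geq 2$ Lefschetz fibrations (applied after blowing down any $(-1)$-spheres contained in fibers), these constraints reduce the possibilities to a short list of candidates. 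The hardest step, and the main obstacle, is ruling out these remaining degenerate cases directly from the explicit factorization of Theorem~\ref{MainThm1}: we expect this to follow by a combinatorial analysis showing that the word in Dehn twists along $c_j, d_j, e_j, x_j, x'_j$ cannot be split as a product of two positive factorizations of a common boundary-parallel twist in $\Gamma_g^1$.
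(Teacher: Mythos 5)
Your strategy for $g\geq 3$ has a fatal flaw at its first step. Capping off the boundary components of the factorization of $(K_g,k_g)$ does give a positive factorization of the identity in $\Gamma_g$, but the corresponding Lefschetz fibration lives on $K_g\,\sharp\,(2g-2)\,\CPb$, the blow-up of $K_g$ at all $2g-2$ base points. This manifold is never spin --- it contains $(-1)$--spheres, so its intersection form is odd --- and worse, those $2g-2$ exceptional spheres are precisely exceptional \emph{sections} of the fibration, which is the opposite of what you want. The spin criterion of Theorem~\ref{T:condition spin LP} detects spinness of the \emph{pencil} total space $K_g$ (which is indeed a spin homotopy $\K$ surface), not of the blown-up fibration; you have conflated the two. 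More generally, any Lefschetz fibration obtained from a pencil by blowing up base points automatically carries exceptional sections, so no construction of this shape can work.

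There is also a strategic inversion relative to the paper. Producing a fibration whose total space has \emph{no} exceptional spheres at all would make the indecomposability step genuinely hard --- and your proposed resolution ($c_1^2$ additivity plus an unproven ``combinatorial analysis'' of splittings of the positive factorization) is exactly the gap you admit you cannot close. The paper's proof of Theorem~\ref{T:counterex Stipsicz} exploits non-minimality instead: starting from the pencils $(X'_g(0),f'_g(0))$ (for $g$ even) and $(X'_g(1),f'_g(1))$ (for $g$ odd), whose total spaces are shown to be spin and hence minimal via Theorem~\ref{T:condition spin LP}, one applies braiding lantern substitutions that convert the pairs of exceptional sections of the blown-up fibrations into exceptional \emph{bisections}. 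Minimality of $X'_g(i)$ pins down the exceptional classes of $X'_g(i)\,\sharp\,k\,\CPb$ as exactly these bisection classes, which meet the fiber twice, so no exceptional section exists; and the mere presence of exceptional spheres in the total space forces fiber-sum indecomposability immediately by Usher's theorem \cite{UsherMinimality}, with no combinatorial analysis of factorizations needed.
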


\noindent The  most challenging part of  constructing such examples is to locate all the exceptional spheres in the symplectic $4$--manifold so as to know for sure that there is none that can be a section. We will produce all our genus $g \geq 4$ examples by applying the braiding lantern substitution of \cite{BaykurHayano} to the explicit positive factorizations of a family of pencils with Kodaira dimension one we obtain via unchaining (Theorem~\ref{T:counterex Stipsicz}). Along with the already existing examples of $g=2,3$, this then completely resolves Stipsicz's conjecture (noting that by the well-known classification of genus--$1$ fibrations, the conjecture does hold in the remaining case). 

In the course of the proof of the above theorem, we establish another result, which would be of independent interest: a complete characterization of when the total space of a Lefschetz pencil is spin, in terms of its monodromy factorization (Theorem~\ref{T:condition spin LP}). This extends Stipsicz's earlier work in \cite{Stipsicz_spin} for Lefschetz fibrations (no base points), and this generalization is applicable to any symplectic $4$--manifold, since a given symplectic $4$--manifold  may not admit a Lefschetz fibration, but it always admits a pencil (with base points) by Donaldson.

\begin{theorem} \label{MainThm3}
Let $(X,f)$ be a genus--$g$ Lefschetz pencil with a monodromy factorization \ $t_{c_1}\cdots t_{c_n}=t_{\delta_1}\cdots t_{\delta_p}$. Then $X$ admits a spin structure if and only if there exists a quadratic form $q:H_1(\Sigma_g^p;\mathbb{Z}/2\mathbb{Z})\to \mathbb{Z}/2\mathbb{Z}$ with respect to the intersection pairing of $H_1(\Sigma_g^p;\mathbb{Z}/2\mathbb{Z})$ such that $q(c_i)=1$ for any $i$ and $q(\delta_j)= 1$ for some $j$. 
\end{theorem}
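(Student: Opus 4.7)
The plan is to relate spin structures on $X$ to quadratic forms on $H_1(\Sigma_g^p;\mathbb{Z}/2)$ via a handle decomposition of $X$ coming from its pencil, generalizing Stipsicz's treatment of Lefschetz fibrations \cite{Stipsicz_spin}. I would first recall the classical bijection: spin structures on the fiber surface $\Sigma_g^p$ are in bijection with quadratic refinements $q:H_1(\Sigma_g^p;\mathbb{Z}/2)\to\mathbb{Z}/2$ of the intersection pairing, and the value $q(\delta_j)$ records whether the spin structure induced on the boundary circle $\delta_j$ is the bounding or the non-bounding one.

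Decompose $X = X^\circ \cup_{\partial} \bigsqcup_{j} B^4(b_j)$, where $X^\circ = X\setminus \bigsqcup_j B^4(b_j)$ inherits from the pencil the structure of a Lefschetz fibration $X^\circ\to S^2$ with fiber $\Sigma_g^p$ and boundary $\partial X^\circ = \bigsqcup_j S^3_j$, each $S^3_j$ being the Hopf bundle over $S^2$ whose fibers are isotopic to $\delta_j$. Handle-decompose $X^\circ$ as two copies of $\Sigma_g^p \times D^2$ (for the two hemispheres of $S^2$) glued by $2$-handles along the vanishing cycles $c_i$ with the Lefschetz framing and by the monodromy identification from the factorization $t_{c_1}\cdots t_{c_n}=t_{\delta_1}\cdots t_{\delta_p}$. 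Starting with a spin structure on the initial $\Sigma_g^p \times D^2$ encoded by a quadratic form $q$, the standard Lefschetz-handle analysis shows that extension through the $2$-handle along $c_i$ is possible precisely when $q(c_i)=1$; the monodromy-closing piece contributes no further obstruction because each $t_{\delta_j}$ preserves every $q$ automatically ($\delta_j$ lies in the radical of the intersection form on $H_1(\Sigma_g^p;\mathbb{Z}/2)$, so the action of $t_{\delta_j}$ on quadratic forms is trivial).

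The main obstacle, and the genuinely new content relative to \cite{Stipsicz_spin}, is the spin-structure matching at each base-point gluing. For each $j$, the $4$-ball $B^4(b_j)$ glues along $S^3_j$, and this forces the spin structure on $X^\circ$ to restrict on $S^3_j$ to the unique spin structure that extends over $B^4$. Each $S^3_j$ assembles from the two solid tori $\delta_j \times D^2$ carried by the two $\Sigma_g^p \times D^2$ pieces, identified by a Hopf twist of Euler number $-1$; the induced spin structures on these solid tori are controlled by $q(\delta_j)$ together with the Hopf framing shift. The technical heart of the proof will be to show that, combined with the parity relation $\sum_j q(\delta_j) \equiv 0 \pmod 2$ forced by $\sum_j [\delta_j]=0$ in $H_1(\Sigma_g^p;\mathbb{Z}/2)$, the simultaneous compatibility of all $p$ base-point gluings is equivalent to the condition $q(\delta_j)=1$ for at least one $j$. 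Finally, for the converse, a quadratic form $q$ satisfying both conditions permits the explicit construction of a spin structure on $X$ by reversing this handle-by-handle analysis.
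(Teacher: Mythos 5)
Your setup (remove balls around the base points, decompose the resulting $\Sigma_g^p$--bundle piece as two copies of $D^2\times\Sigma_g^p$ joined by Lefschetz $2$--handles, encode spin structures by quadratic forms, and get $q(c_i)=1$ from the vanishing-cycle handles) is exactly the paper's, but you have located the source of the condition $q(\delta_j)=1$ in the wrong place, and the step you do propose to prove is vacuous. Each $S^3_j$ has a \emph{unique} spin structure, and it extends over $B^4$; so once $X^\circ$ is spin, gluing in the base-point $4$--balls imposes no condition whatsoever. Your ``technical heart'' --- simultaneous compatibility of the $p$ base-point gluings --- therefore cannot produce the condition on $q(\delta_j)$; carried out literally, your argument would conclude that $X$ is spin iff some $q$ satisfies $q(c_i)=1$ for all $i$, which is false (the paper's $X_g'(i)$ with $g+i$ odd have such $q$ but fail Rokhlin's theorem).

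The genuine obstruction sits precisely in the piece you declare obstruction-free: the second copy of $D^2\times\Sigma_g^p$. That piece is attached along $S^1\times\Sigma_g^p$ as one $2$--handle $D^2\times B$ (with $B$ a disk in $\Sigma_g^p$ near some $\delta_j$) plus $3$--handles, and only the $2$--handle matters. The fact that $t_{\delta_j}$ acts trivially on quadratic forms only guarantees that the spin structure on the mapping torus of $t_{\delta_1}\cdots t_{\delta_p}$ is pulled back from the fiber; it says nothing about whether it restricts to the bounding structure on the attaching circle of $D^2\times B$ with its product framing. Identifying that mapping torus with $S^1\times\Sigma_g^p$ requires an isotopy of $t_{\delta_1}\cdots t_{\delta_p}$ to the identity that does not fix $\partial\Sigma_g^p$ pointwise; it drags $B$ once around the boundary component $\delta_j$ (this is where the Euler number $-1$ of the base-point normal bundle actually enters), and this rotation shifts the induced spin structure on the attaching circle so that the $2$--handle extension is possible iff $q(\delta_j)=1$. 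This framed-circle computation is the new content beyond Stipsicz, and it is absent from your plan. Finally, your parity relation $\sum_j q(\delta_j)\equiv 0$ does not bridge ``$q(\delta_j)=1$ for all $j$'' and ``for some $j$'' (take values $(1,1,0,0)$); that equivalence instead follows, for $q$ satisfying $q(c_i)=1$ for all $i$, from the fact that the extension question has a $j$--independent answer.
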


In the final section, Section~\ref{Sec:further}, we  will present two more applications, one regarding the topology of symplectic $4$--manifolds (Theorem~\ref{exotic}, and one regarding that of pencils (Theorem~\ref{thm:5}). All the examples we construct therein come from combining unchaining and rational blowdown surgeries that respect the fibration structures. 

Constructions of  small simply-connected symplectic $4$--manifolds with $b_2^+ \leq 3$ via rational blowdowns has a fairly long and rich history, pioneered by the works of Fintushel--Stern and Jongil Park \cite{FSrationalblowdown, Park0, FSPark, Park}. Similar constructions via  monodromy substitutions in positive factorizations of Lefschetz pencils was first given by Endo and Gurtas \cite{EndoGurtas}, who observed that lantern substitutions (see Lemma~\ref{lantern}) amount to a rational blowdown of a symplectic $(-4)$--sphere ---which since then, has been extended to many other substitutions corresponding to blowdowns of more general configurations of spheres \cite{EndoEtal, GayMark, KarakurtStarkston}. 
The hardship of the latter approach is to have explicit positive factorizations of pencils that contain an enough number of lantern configurations for rational blowdowns, and was so far successfully applied to genus--$2$ Lefschetz pencils in \cite{EndoGurtas} and \cite{AkhmedovPark}. We demonstrate that, through unchaining, we do get such useful monodromy factorizations, from which,  we can for example obtain new symplectic genus--$3$ Lefschetz fibrations with exotic total spaces: 

\begin{theorem}\label{MainThm4}
There are  genus--$3$ Lefschetz fibrations $(X_j, f_j)$, for   $j=0,1,2, 3$, where each $X_j$ is a minimal symplectic $4$--manifold homeomorphic but not diffeomorphic to $3\CP \sharp (19-j)\CPb$, and  each $f_{j+1}$ has a monodromy factorization obtained from that of $f_{j}$ by a lantern substitution.
\end{theorem}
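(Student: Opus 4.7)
The plan is to construct $(X_0, f_0)$ as a genus-$3$ Lefschetz fibration whose positive factorization is produced by the unchaining constructions of the preceding sections, arranged so that $\chi(X_0) = 24$, $\sigma(X_0) = -16$, and the factorization contains three pairwise disjoint subwords of the form $t_{x_1}t_{x_2}t_{x_3}t_{x_4}$ whose vanishing cycles cobound embedded four-holed spheres in $\Sigma_3$ and therefore support a lantern relation. One would then inductively define $(X_{j+1}, f_{j+1})$ by applying a lantern substitution to $(X_j, f_j)$: at the level of the factorization this replaces $t_{x_1}t_{x_2}t_{x_3}t_{x_4}$ by a product $t_{a}t_{b}t_{c}$, and by Endo--Gurtas the corresponding monodromy substitution realizes, at the level of total spaces, a rational blowdown of a symplectic $(-4)$-sphere configuration. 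This preserves $b_2^+ = 3$ and decreases $b_2^-$ by one at each step.

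To pin down the homeomorphism type of each $X_j$, I would first use the standard Lefschetz presentation of $\pi_1(X_j)$ --- generators given by loops around vanishing cycles, relations given by the vanishing cycles themselves --- to verify simple connectedness. Non-spin-ness of each $X_j$ can then be checked via Theorem~\ref{T:condition spin LP} by exhibiting the obstruction to a compatible $\mathbb{Z}/2$-quadratic form on $H_1(\Sigma_3^p;\mathbb{Z}/2\mathbb{Z})$; alternatively, one may produce an explicit symplectic sphere of odd self-intersection (for instance, an exceptional section of the fibration) that survives each lantern surgery. Combined with the numerical invariants, Freedman's classification gives $X_j \cong 3\CP \sharp (19-j)\CPb$.

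To show each $X_j$ is minimal and not diffeomorphic to $3\CP \sharp (19-j)\CPb$, I would choose the starting monodromy so that, via Theorem~\ref{KodFromLF} applied to its exceptional multisections, $X_0$ is a minimal symplectic $4$-manifold of Kodaira dimension at least one. Rational blowdowns along $(-4)$-sphere configurations disjoint from the detected multisections preserve both minimality and positive Kodaira dimension in this setting, so each $X_j$ is a minimal symplectic $4$-manifold of Kodaira dimension $\geq 1$. Since $b_2^+(X_j) = 3$, the Taubes/Seiberg--Witten nonvanishing for such symplectic manifolds distinguishes $X_j$ from $3\CP \sharp (19-j)\CPb$, whose Seiberg--Witten invariants vanish.

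The main obstacle is the very first step: producing through unchaining a genus-$3$ positive factorization whose total space simultaneously has the required $(\chi,\sigma)$, is simply connected and non-spin, has positive Kodaira dimension with exceptional multisections visible from the monodromy, and contains three pairwise disjoint lantern configurations whose successive substitutions leave all of these properties intact. The delicate input is the bookkeeping of vanishing cycles and exceptional multisections through the combined unchaining and lantern pipeline, on which the entire argument rests.
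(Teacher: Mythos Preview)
Your proposal has a real gap at the minimality step. You want to start with an $X_0$ that is already minimal with $\kappa\geq 1$ and $b_2^+=3$, certified by Theorem~\ref{KodFromLF}. But that theorem's $\kappa=1$ clause requires $b^+>3$, which fails here, and its $\kappa=0$ clause needs $n=2g-2=4$ exceptional sections, which would make $X_0$ non-minimal. More fundamentally, Theorem~\ref{KodFromLF} computes $\kappa$ once you already know where \emph{all} the exceptional (multi)sections sit; it does not establish minimality. The same problem undermines your claim that rational blowdowns ``disjoint from the detected multisections'' preserve minimality: this is not a general fact, and you have not located the exceptional classes to begin with. (Your alternative non-spin check via ``an exceptional section that survives'' also contradicts the minimality you need.)

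The paper's route is essentially the opposite of yours. It begins \emph{non-minimally}, with $(X_3(1),f_3(1))$, the $4$-fold blow-up of the symplectic Calabi--Yau $X_3'(1)$ (so $\chi=28$, $\sigma=-20$), whose four exceptional spheres are all \emph{sections} and hence completely visible in the monodromy. After Hurwitz moves the factorization contains the subword $t_a^2 t_{a'}^2 t_{c_1}^6 t_{c_3}^3 t_{c_5}^3 t_{c_7}^6$, in which Lemma~\ref{sevenLS} exhibits \emph{seven} successive (not three disjoint) lantern substitutions. The first three are braiding lanterns in the sense of \cite{BaykurHayano}: each one merges a pair of exceptional (multi)sections, so that after three steps the four sections have coalesced into a single exceptional quadruple-section of $X_3$. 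The fourth lantern sphere meets this quadruple-section algebraically and geometrically twice, and Dorfmeister's criterion \cite[Lemma~1.1]{Dorfmeister1} then forces $X_4$ --- and the further blowdowns $X_5,X_6,X_7$ --- to be minimal. Reindexing $j=k-4$ gives the statement. Non-spinness comes not from exceptional sections but from the reducible fibers created by the separating lantern curves $s,s',v$, which yield surfaces of odd square. The key idea you are missing is this explicit bookkeeping of exceptional spheres through braiding lanterns, starting from a non-minimal model, until Dorfmeister's criterion bites.
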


\noindent Moreover, by the techniques of \cite{BaykurHayano}, one can explicitly describe the symplectic canonical class of each $X_j$ as a multisection of $f_j$; see Remark~\ref{whereiscan}. 

Our final application is on the diversity of Lefschetz pencils and fibrations of the same genera on a given symplectic $4$--manifold (up to equivalence of pencils/fibrations through self-diffeomorphisms of the $4$--manifold and the base, commuting the maps). Examples of inequivalent Lefschetz \emph{fibrations} on  a blow-up of $T^2 \x \Sigma_2$, whose fibers have different divisibility in homology were discussed by Ivan Smith in this thesis, and several inequivalent fibrations on homotopy elliptic surfaces, distinguished by their monodromy groups,  were produced by Jongil Park and Ki-Heon Yun in  \cite{ParkYun, ParkYun2}. In \cite{BaykurInequivalentLF} and \cite{BaykurInequivalentRR}, building on Donaldson's existence result and the doubling construction for pencils, the first author established that in fact \emph{any} symplectic $4$--manifold, possibly after blow-ups, admits inequivalent Lefschetz pencils and fibrations of arbitrarily high genera. Also see \cite{BaykurHayano} and \cite{Hamada} for inequivalent \emph{pencils} on homotopy $\K$ surfaces and on ruled surfaces, respectively. 
Here we produce many more examples of inequivalent Lefschetz pencils and fibrations, notably with explicit positive factorizations: 

\begin{theorem}\label{MainThm5}
For any $g\geq 3$ and $i=0,1,2,\ldots,g-1$, there are pairs of inequivalent relatively minimal  genus--$g$ Lefschetz pencils $(Y_g(i), h^j_g(i))$, $j=1,2$, and inequivalent Lefschetz fibrations on their blow-ups. For any $g\geq 3$, there are pairs of  inequivalent  relatively minimal genus--$g$ Lefschetz pencils on once blown-up elliptic surface $E(1) \, \sharp \, \CPb \cong \CP\, \sharp \, 10\, \CPb$, with different number of reducible fibers.
\end{theorem}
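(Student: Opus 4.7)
The plan is to produce all the required pairs explicitly at the level of positive factorizations, starting from the unchaining-based pencils $X_g(i)$ of Theorem~\ref{T:Kodaira dim X_g(i)}, and to distinguish the two pencils in each pair by a numerical monodromy invariant: the number of reducible singular fibers, equivalently the number of separating vanishing cycles. This count is preserved by Hurwitz moves and simultaneous conjugation, and so is an invariant of the equivalence class of a Lefschetz pencil or fibration.

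For the first half of the statement, I would take $Y_g(i)$ to be the total space of $X_g(i)$ and let $h^1_g(i)$ be the pencil of Theorem~\ref{T:Kodaira dim X_g(i)}. I then build $h^2_g(i)$ by applying the unchaining substitution to a topologically distinct odd chain of $(-2)$--curves in the same starting holomorphic pencil on the underlying surface of general type. Two odd chains of the same length with diffeomorphic regular neighborhoods give diffeomorphic Stein subdomains traded in the surgery (by the chain relation, Lemma~\ref{T:chain relation}), so the smooth total space $Y_g(i)$ is unchanged; but because the two chains embed differently into the fiber surface, the boundary Dehn twists produced by the two unchaining moves have different separating/nonseparating profiles in the ambient $\Sigma_g$, which gives different reducible-fiber counts. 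Blowing up the common base points then yields the claimed inequivalent genus--$g$ Lefschetz fibrations on $Y_g(i)\sharp(2g-2)\CPb$.

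For the second half, concerning $E(1)\sharp\CPb\cong\CP\sharp 10\CPb$, I would construct one pencil by taking the standard holomorphic genus--$g$ pencil on $E(1)\sharp\CPb$, and produce a second pencil on the same 4-manifold by performing an unchaining move followed by its inverse on a chain sitting in a topologically different position in the fiber: the net effect on the smooth 4-manifold is trivial, but the separating/nonseparating profile of the vanishing cycles is altered, changing the reducible-fiber count. An explicit read-off from the resulting positive factorizations confirms that the two pencils have different numbers of reducible fibers and are therefore inequivalent.

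The principal technical obstacle is verifying that each pair of factorizations genuinely lives on the same smooth $4$-manifold while producing different reducible-fiber counts. I would handle this by tracking the Stein cobordism swapped in each unchaining step and invoking an ambient isotopy of the starting pencil carrying one chain configuration to the other, ensuring the global total space is unchanged up to diffeomorphism; the distinguishing count of separating vanishing cycles is then a direct read-off from the final factorizations. Relative minimality is automatic from the fact that no vanishing cycle in the resulting factorizations bounds a disk in the fiber.
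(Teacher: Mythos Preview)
Your proposal has a genuine gap: the mechanism you invoke to guarantee that the two pencils live on the same $4$--manifold does not work as stated, and your construction is entirely different from what the paper actually does.

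You propose to build the two pencils by unchaining along two different odd chains in the original holomorphic pencil, arguing that since the Stein subdomains swapped are abstractly diffeomorphic the total spaces agree. But unchaining is a surgery: the result depends on the complement of the chain neighborhood and on the gluing contactomorphism, not just on the abstract diffeomorphism type of $V_g$. Your fix --- ``invoking an ambient isotopy of the starting pencil carrying one chain configuration to the other'' --- is exactly the hard part, and you give no argument for why such an isotopy exists. Worse, if it did exist and were fiber-preserving it would make the pencils \emph{equivalent}, so you would need a non-fiber-preserving ambient diffeomorphism carrying one chain to the other, which you do not produce. Your second construction (``unchaining followed by its inverse'') is not a construction at all: undoing an operation returns the original pencil. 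Finally, $Y_g(i)$ in the theorem is not the total space of $X_g(i)$, and the pencils $f'_g(i)$ have $2(i+1)$ base points, not $2g-2$.

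The paper's route is quite different and avoids all of this. It starts from the single pencil $(X'_g(i),f'_g(i))$ and applies a \emph{lantern substitution} in two different ways. After Hurwitz moves one finds a subword $t_{c_1}^2 t_{x_1} t_{x_1'}$; lanterning this (on the $6$--holed sphere bounded by $x_1,x_1',\delta_1,\delta_1'$ and two copies of $c_1$) produces $t_x t_y t_z$ with $y$ \emph{separating}, giving $h^1_g(i)$ with one reducible fiber. Using Lemma~\ref{lemA} one instead isolates $t_{c_1} t_{x_1} t_{x_1''} t_{c_5}$ and lanterns that; the output curves are all nonseparating, so $h^2_g(i)$ has no reducible fibers. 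The reason both live on the same manifold is concrete: each lantern sphere meets an exceptional section once, so each substitution is an ordinary blowdown, and both total spaces are $X'_g(i)\sharp\CPb$. Taking $i=g-1$ and using $X'_g(g-1)\cong E(1)$ gives the second assertion.
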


\noindent The examples of inequivalent pencils we obtain here are on a family of symplectic $4$--manifolds, whose symplectic Kodaira dimensions run through $-\infty, 0$ and $1$.

\vspace{0.2in}
\noindent \textit{Acknowledgements. } The results of this article were announced in several seminar talks and conferences since 2015, including the Great Lakes Geometry Conference (Ann Arbor, 2015), the SNU Topology Winter School (Seoul, 2015), and the Four Dimensional Topology Workshop  (Osaka, 2017), and we thank the organizers and participants  for their interest and comments. R. I. B. was partially supported by the NSF grant DMS-1510395, K.H. by JSPS KAKENHI (Grant Number 17K14194), and N. M. by  JSPS KAKENHI (Grant Number 16K17601).  
\vspace{0.4in}

\medskip
\section{Preliminaries} \label{Sec:preliminaries} 

Here we review the main notions and background results used throughout the paper. Manifolds in this paper are assumed to be smooth, connected and oriented, unless otherwise stated.

\subsection{Lefschetz fibrations and pencils} \

Let $X$ and $\Sigma$ be compact manifolds of dimension $4$ and $2$, respectively. A smooth map $f\colon X\to \Sigma$ is called a \textit{Lefschetz fibration} if the critical locus $C=\text{Crit}(f)$ is a discrete set such that

\begin{itemize}
\item
for any $x\in \Crit(f)$ there are complex charts $(U,\varphi)$ at $x$ and $(V,\psi)$ at $f(x)$, compatible with the orientation of $X$ and $\Sigma$, so that
\[
\psi\circ f\circ \varphi^{-1}(z,w)=z w,
\]
\item
the restriction $f|_{f^{-1}(f(C)}$ is a surface bundle, and
\item
the restriction $f|_{\Crit(f)}$ is injective. 

\end{itemize}

For $B\subset X$ a \emph{non-empty} finite set of points, a smooth map $f:X\setminus B\to \CL$  is called a \textit{Lefschetz pencil} if $f|_{X\setminus \nu B}$ is a Lefschetz fibration, where $\nu B$ is the union of balls centered at points in $B$, and for any $b\in B$ there exist a complex chart $(U,\varphi)$ compatible with the orientation of $X$, and an orientation-preserving self-diffeomorphism $\Phi:\CL\to \CL$ so that:
\[
\Phi\circ f\circ \varphi^{-1}(z,w) = [z:w]. 
\]
Each point in $B$ is called a \textit{base point of $f$}. 

We will denote a Lefschetz fibration or a pencil simply as the pair $(X, f)$. For a Lefschetz fibration or pencil $(X,f)$, the set of critical values $f(C) \subset \Sigma$ is discrete. So the genus of the closure of a regular fiber $\overline{f^{-1}(q)}$ does not depend on the regular value $q$, and is called the \textit{genus of $(X,f)$}. In this paper, we will always assume that $(X,f)$ is \textit{relatively minimal}, that is, for any point $q \in f(X)$, the closure $\overline{f^{-1}(q)}$ does not contain any $(-1)$-spheres. 

Lastly, an \emph{allowable Lefschetz fibration} is a Lefschetz fibration with base $\Sigma=D^2$, where the fibers have non-empty boundaries, and for any point $q \in f(X)$, $f^{-1}(q)$ does not contain any closed surfaces.

\subsection{Positive factorizations} \

Let $\Sigma_g^n$ be a compact genus-$g$ surface with $n$ boundary components. 
We denote by $\Gamma_g^n$ the mapping class group of $\Sigma_g^n$: 
\[
\Gamma_g^n = \pi_0\left(\left\{\psi\in \Diff^+(\Sigma_g^n)\left|\psi|_{\partial \Sigma_g^n}=\id\right.\right\}\right).  
\]
Let $(X,f)$ be a Lefschetz fibration or pencil over $\Sigma=\CL$ with $l$ critical points and $n$ base points (where $n=0$ if it is a fibration). Take a regular value $p_0\in \CL$ and simple paths $\gamma_1,\ldots,\gamma_l$ from $p_0$ to critical values which are mutually disjoint except at $p_0$. 
Suppose that $\gamma_1,\ldots,\gamma_l$ appear in this order when we go around $p_0$ counterclockwise. 
We denote by $\alpha_i$ a loop with base point $p_0$ which first goes along $\gamma_i$, then goes around a critical value and goes back to $p_0$ along $\gamma_i$. 
Kas \cite{Kas} showed that a monodromy of $f$ along $\alpha_i$, which can be regarded as an element of $\Gamma_g^n$ under an identification $\Sigma_g^n \cong f^{-1}(p_0)\setminus \nu B$, is a \emph{positive Dehn twist} $t_{c_i}$ along a simple closed curve $c_i\subset \Sigma_g^n$, called the \textit{vanishing cycle} of $f$ associated to $c_i$ by $\gamma_i$.

We can deduce from the local description of $f$ around the base points that the monodromy of $f$ along a concatenated loop $\alpha_1\cdots \alpha_l$ is equal to $t_{\delta_1}\cdots t_{\delta_n}\in\Gamma_g^n$, where $\delta_1,\ldots,\delta_n\subset \Sigma_g^n$ are simple closed curves parallel to respective boundary components. 
We therefore obtain the following \emph{positive factorization} of the boundary multi-twist $t_{\delta_1}\cdots t_{\delta_n}$ in $\Gamma_g^n$: 
\[
t_{c_l}\cdots t_{c_1} = t_{\delta_1}\cdots t_{\delta_n},
\]
which is called the \textit{monodromy factorization} of the Lefschetz pencil $f$. When there are no base points, i.e. when $n=0$, this is a factorization of identity in $\Gamma_g$.  Note that relative minimality of a Lefschetz fibration or pencil implies that no $c_i$ is null-homotopic. 
In the case of an allowable Lefschetz fibration, where the base is $D^2$ instead of $\CL$, one derives a positive factorization of an element in $\Gamma_g^n$, $n>0$, which doesn't need to be a boundary multi-twist, and no $c_i$ is null-homologous. 

Conversely, given such a positive factorization as above, one can build  a genus--$g$ Lefschetz  pencil $(X,f)$ with $l$ critical points and $n$ base points, where $X$ is a symplectic $4$--manifold  \cite{GS}.  Similarly, a positive factorization of any mapping class $\mu$ in $\Gamma_g^n$ with $n>0$, where no Dehn twist curve $c_i$ is null-homologous, one can build a genus--$g$ allowable Lefschetz fibration $(X,f)$, where $X$ is a Stein domain \cite{LP, AO}. On the boundary of $X$, this fibration induces an open book with monodromy $\mu$, which supports the natural contact structure induced by the Stein structure on $X$ (either as the maximal distribution of the complex structure restricted to the boundary, or as the kernel of contact $1$--form which is the primitive of the symplectic structure around the boundary). Moreover, from a given positive factorization for a Lefschetz fibration or pencil, we can obtain another one by substituting a subword of the factorization with another product of positive Dehn twists. In the following sections we will construct several Lefschetz fibrations and pencils by this procedure, sometimes called as \emph{monodromy substitution}. In these constructions, we will repeatedly use the following well-known relations (see e.g. \cite{FM_MCG}):

\begin{lemma}[Lantern relation] \label{lantern}

	Let $\delta_1$, $\delta_2$, $\delta_3$ and $\delta_4$ be the four boundary curves of $\Sigma_0^4$ and let $x$, $y$ and $z$ be the interior curves as shown in Figure~\ref{LR}. 
	Then, we have the \emph{lantern relation} in $\Gamma_0^4$:
	\begin{align*}
	t_{\delta_4} t_{\delta_3} t_{\delta_2} t_{\delta_1} = t_z t_y t_x. 
	\end{align*}

\begin{figure}[hbt]
	\centering
	\includegraphics[scale=.45]{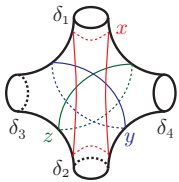}
	\caption{The curves $x$, $y$, $z$ on $\Sigma_0^4$.}
	\label{LR}
\end{figure}

\end{lemma}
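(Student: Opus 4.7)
The plan is to apply Alexander's method for mapping class groups: two elements of $\Gamma_0^4$ coincide if and only if they induce the same action, up to isotopy rel $\partial$, on a collection of essential arcs whose complement in $\Sigma_0^4$ is a disjoint union of disks. First I would realize $\Sigma_0^4$ as a large round disk (bounded by $\delta_4$) with three smaller open disks removed (bounded by $\delta_1,\delta_2,\delta_3$), and isotope the interior curves $x,y,z$ of Figure~\ref{LR} to round circles so that each one encloses exactly two of the inner boundary components. Then I would fix three pairwise disjoint properly embedded arcs $a_1,a_2,a_3$, with $a_i$ running from $\delta_4$ to $\delta_i$, arranged so that cutting $\Sigma_0^4$ along $a_1\cup a_2\cup a_3$ yields a single disk; this is a standard cut system to which Alexander's method applies on $\Sigma_0^4$.

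The verification then reduces to comparing the images of the three arcs $a_i$ under both sides of the proposed identity. For the left-hand side, each $t_{\delta_j}$ is supported in a collar of $\delta_j$, so its action on $a_i$ drags the endpoint on $\delta_j$ once around $\delta_j$ when $j=i$ or $j=4$, and leaves $a_i$ fixed otherwise. Since the twists along boundary components all commute, this describes the composite action of $t_{\delta_4}t_{\delta_3}t_{\delta_2}t_{\delta_1}$ unambiguously. For the right-hand side, each arc $a_i$ meets exactly the two of $x,y,z$ that enclose $\delta_i$, and meets each of them transversely in a single point; a single picture of a Dehn twist along a curve meeting an arc once then allows one to compute $t_x(a_i)$, then $t_y$ of that, then $t_z$ of the result, in the prescribed order.

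The expected pay-off is that the final image of each $a_i$ under $t_zt_yt_x$ agrees, up to ambient isotopy rel $\partial$, with the image under $t_{\delta_4}t_{\delta_3}t_{\delta_2}t_{\delta_1}$, after which Alexander's method immediately delivers the desired equality in $\Gamma_0^4$. The main obstacle is the pictorial bookkeeping: one must choose representatives of $x,y,z$ and of the arcs carefully, and track each twist in a sequence of drawings so as to produce canonical representatives at the end. Because this lantern identity is a classical result, in a full write-up I would carry out only a representative case in detail and then cite \cite[Section~5.1]{FM_MCG}, where the entire picture-level verification appears.
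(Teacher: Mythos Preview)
Your proposal is correct and follows the standard Alexander-method verification of the lantern relation as presented in \cite[Section~5.1]{FM_MCG}. The paper itself does not supply a proof: it simply records the relation as well-known and cites \cite{FM_MCG}, so your argument in fact provides more detail than the paper does, and is precisely the proof in the source the paper refers to.
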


\begin{lemma}[Odd chain relation]\label{T:chain relation}
	Let $d_1, d_2,\ldots,d_{2h+1}$ be simple closed curves on $\Sigma_g^n$ such that $d_i$ and $d_j$ are disjoint if $|i-j|\geq 2$ and that $d_i$ and $d_{i+1}$ intersect at one point. Then, a regular neighborhood of $d_1\cup d_2 \cup\cdots \cup d_{2h+1}$ is a subsurface of genus $h$ with two boundary components, $b_1$ and $b_2$. We then have the \emph{odd chain relation}  in $\Gamma_g^n$:
	\begin{align*}
	(t_{d_1}t_{d_2}\cdots t_{d_{2h+1}})^{2h+2} = t_{b_1} t_{b_2}. 
	\end{align*}
\end{lemma}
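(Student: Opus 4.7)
The plan is to reduce the identity to the mapping class group of a regular neighborhood $N$ of the chain, and then derive it from a classical braid group relation via the Birman--Hilden hyperelliptic double cover correspondence. For the reduction, I would observe that $N$ is diffeomorphic to $\Sigma_h^2$: an Euler characteristic count (or a direct handle decomposition with one $0$--handle and $2h+1$ $1$--handles attached along the chain) gives $\chi(N) = -2h$ with exactly two boundary components $b_1, b_2$. Any Dehn twist supported in $N$ is the image of the corresponding twist in $\Gamma_h^2$ under the natural inclusion-induced homomorphism $\Gamma(N) \to \Gamma_g^n$, so it suffices to prove $(t_{d_1}\cdots t_{d_{2h+1}})^{2h+2} = t_{b_1} t_{b_2}$ inside $\Gamma_h^2$.

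Next, I would realize $\Sigma_h^2$ as the two-fold branched cover $\pi\colon \Sigma_h^2 \to D^2$ over a disk with $2h+2$ collinear interior branch points $p_1, \ldots, p_{2h+2}$ (consistent with $\chi = 2 - (2h+2) = -2h$, and with the unbranched boundary lifting to the two circles identified with $b_1$ and $b_2$). Letting $a_i$ denote the arc from $p_i$ to $p_{i+1}$, the preimage $\pi^{-1}(a_i)$ is isotopic to $d_i$, and the chain intersection pattern of the $d_i$ matches the incidence pattern of the $a_i$. By the Birman--Hilden theorem, the covering involution induces a lifting of the braid group $B_{2h+2}$, interpreted as the mapping class group of $D^2$ with $2h+2$ marked points, to a subgroup of $\Gamma_h^2$, sending each standard half-twist $\sigma_i$ along $a_i$ to the Dehn twist $t_{d_i}$.

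To finish, I would invoke the classical Garside identity $(\sigma_1 \sigma_2 \cdots \sigma_{2h+1})^{2h+2} = \Delta^2$ in $B_{2h+2}$, where the full twist $\Delta^2$ coincides with the boundary-parallel Dehn twist $t_{\partial D^2}$ in the marked disk. Since $\partial D^2$ carries no branch point, $\pi^{-1}(\partial D^2) = b_1 \sqcup b_2$, and the Birman--Hilden lift of $t_{\partial D^2}$ is exactly the product $t_{b_1} t_{b_2}$. Lifting both sides of the braid identity then yields the desired relation in $\Gamma_h^2$, which pushes forward via the inclusion homomorphism to the claimed identity in $\Gamma_g^n$. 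The most delicate step in the plan is the Birman--Hilden lifting: one must verify both that it gives a well-defined group homomorphism (not merely a set map on generators) and that $\Delta^2$ lifts to $t_{b_1}t_{b_2}$ with the correct multiplicity on each component above $\partial D^2$; once those ingredients are in place, the rest is bookkeeping.
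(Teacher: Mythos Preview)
Your argument is correct and is essentially the standard proof of the chain relation via the Birman--Hilden correspondence. Note, however, that the paper does not supply its own proof of this lemma: it is stated as a well-known relation and attributed to \cite{FM_MCG}, so there is no in-paper argument to compare against. Your route (reduce to $\Gamma_h^2$, realize $\Sigma_h^2$ as the hyperelliptic double cover of a $(2h+2)$--marked disk, lift the braid identity $(\sigma_1\cdots\sigma_{2h+1})^{2h+2}=\Delta^2$) is precisely the approach taken in that reference, and your caveats about the Birman--Hilden lift being a genuine homomorphism and about $t_{\partial D^2}$ lifting to $t_{b_1}t_{b_2}$ (since the cover is trivial over a boundary annulus disjoint from the branch locus) are exactly the points one must check.
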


\medskip
\subsection{Symplectic Kodaira dimension and Calabi-Yau surfaces}\label{KodSection} \

A symplectic $4$-manifold is said to be \emph{minimal} if it contains no symplectic sphere with self-intersection $-1$. Let $(X_{\min},\omega_{\min})$ be a minimal  symplectic $4$-manifold
obtained from a given closed symplectic manifold $(X,\omega)$ by blowing-down all the symplectic spheres. For $K_{\min}$ the canonical class of $(X_{\min},\omega_{\min})$, 
the \emph{symplectic Kodaira dimension} $\kappa(X)$ of $(X,\omega)$ is defined as follows: 
	\[
	\kappa(X) = \begin{cases}
	-\infty &  \textit{if} \ \   K_{\min}\cdot [\omega_{\min}] < 0 \mbox{ or }K_{\min}^2 <0\\
	\ \ 0 & \ \ \ \ K_{\min}\cdot [\omega_{\min}] =K_{\min}^2 =0\\
	\ \ 1 & \ \ \ \ K_{\min}\cdot [\omega_{\min}] > 0\mbox{ and }K_{\min}^2 =0\\
	\ \ 2 & \ \ \ \ K_{\min}\cdot [\omega_{\min}] > 0 \mbox{ and }K_{\min}^2 >0. 
	\end{cases}
	\] 
It turns out that $\kappa(X)$ is not only independent of the associated minimal symplectic manifold $(X_{\min},\omega_{\min})$, but also the symplectic structure $\omega$ on $X$ (\cite{Li1}) ---so it is a diffeomorphism invariant.

We will make use of the following criterion for a symplectic $4$-manifold to have a specific Kodaira dimension:

\begin{theorem}[Kodaira dimension from monodromy factorizations, {\cite{Sato, BaykurHayano, BaykurGenus3}}] \label{KodFromLF}
Let $X$ be a symplectic $4$-manifold which is the total space of a genus $g \geq 2$ Lefschetz fibration with a monodromy factorization which lifts to $\Gamma_g^n$ as a factorization of the boundary multi-twist, i.e.
\[ t_{c_l} \cdot \ldots \cdot t_1 = t_{\delta_n} \cdot \ldots \cdot t_{\delta_1} . \]
\begin{enumerate}
\item
The kodaira dimension $\kappa(X)$ is equal to $-\infty$ if $n > 2g-2$, \

\item
$\kappa(X) = 0$ if $n= 2g-2$ and $b^+(X) \neq 1$, 

\item
$\kappa(X)=1$ if $n = 2g-3$ and $b^+(X) >3$, 
\end{enumerate}
\end{theorem}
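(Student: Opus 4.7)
The plan is to translate the algebraic hypothesis into the geometric existence of $n$ disjoint exceptional sections, compute $K \cdot F$ on the pencil obtained by blowing these down, and then invoke the Taubes--Li--Liu classification of symplectic $4$-manifolds.

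First, I would appeal to the standard correspondence, made explicit in the cited works, between lifts of the monodromy factorization to $\Gamma_g^n$ whose right-hand side is the boundary multi-twist $t_{\delta_n}\cdots t_{\delta_1}$, and configurations of $n$ pairwise disjoint exceptional sections $S_1,\ldots,S_n$ of $(X,f)$: each $S_i$ is the section obtained by capping $\delta_i$ by the disk-fiber of a tubular neighborhood, and the first power on each boundary twist encodes $S_i^2=-1$. Next, I blow down the sections via $\pi\colon X\to Y$ to obtain a genus $g$ Lefschetz pencil $(Y,\bar f)$ with $n$ base points. Writing $F_X,F_Y$ for generic fibers, the relation $F_X=\pi^* F_Y-\sum_i S_i$ forces $F_Y^2=n$, and adjunction on the smooth symplectic fiber $F_Y$ yields
\[ K_Y\cdot F_Y \;=\; 2g-2-n. \]
Since blowups preserve Kodaira dimension, $\kappa(X)=\kappa(Y)$.

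I would then analyze the three cases. In case (1), $n>2g-2$ makes $F_Y$ a connected symplectic surface of positive self-intersection pairing negatively with $K_Y$; Liu's symplectic Enriques--Kodaira-type classification characterizes $\kappa=-\infty$ as rational or ruled, so $\kappa(X)=-\infty$. In case (2), $n=2g-2$ gives $K_Y\cdot F_Y=0$; a compatible symplectic form on $Y$ can be chosen so that $[\omega_Y]$ is (up to an arbitrarily small perturbation) a positive multiple of $\mathrm{PD}(F_Y)$, whence $K_Y\cdot[\omega_Y]\le 0$; the hypothesis $b^+(X)\neq 1$ rules out rational/ruled, and Taubes' SW theory produces a pseudoholomorphic representative of $K_{Y_{\min}}$, which by positivity of intersection with $F_Y$ together with $F_Y^2>0$ must be null in homology, giving $\kappa(X)=0$. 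In case (3), $K_Y\cdot F_Y=1>0$ gives $\kappa(X)\ge 1$; the hypothesis $b^+(X)>3$ rules out symplectic Calabi--Yau (which have $b^+\le 3$ by the Bauer--Li results cited in the introduction) and rational/ruled ($b^+=1$). To separate $\kappa=1$ from $\kappa=2$ I would show $K_{Y_{\min}}^2=0$ by pairing the Taubes representative of $K_{Y_{\min}}$ with $F_Y$ and combining this with the numerical data of the pencil (the number of vanishing cycles and sections, equivalently the values of $\chi(X)$ and $\sigma(X)$) to obtain $K_{Y_{\min}}^2\le 0$, and hence equal to zero by the non-negativity of $K_{\min}^2$ on minimal symplectic $4$-manifolds with $\kappa\ge 0$.

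The main obstacle is case (3): ruling out $\kappa=2$ demands propagating the canonical class through the blowdowns from $Y$ to its minimal model and pinning $K_{\min}^2$ to zero from the pencil data, which is subtle because the Hodge-index inequality is unavailable once $b^+\ge 2$. Cases (1) and (2) reduce cleanly to the numerical identity $K_Y\cdot F_Y=2g-2-n$ together with the Liu/Taubes dichotomy.
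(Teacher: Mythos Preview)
This theorem is not proved in the paper: it is quoted in the preliminaries as a background result from \cite{Sato, BaykurHayano, BaykurGenus3}, and the paper only uses cases~(1) and~(2) directly (see the proof of Theorem~\ref{T:Kodaira dim X_g(i)}, where the $\kappa=1$ conclusions are obtained by separate ad hoc arguments---spin structures and direct computation of $K^2$---rather than by invoking case~(3)). So there is no ``paper's own proof'' to compare against; I will assess your sketch on its merits.

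Your reduction to the adjunction identity $K_Y\cdot F_Y=2g-2-n$ on the blown-down pencil is exactly the right starting point and handles case~(1) cleanly. For case~(2), however, you pass to $Y_{\min}$ and argue with a Taubes representative of $K_{Y_{\min}}$ paired against $F_Y$; the missing step is why $Y$ is already minimal. This is precisely what Sato supplies: for a relatively minimal genus--$g$ fibration with $\kappa\ge 0$, every exceptional sphere is a multisection, and the total fiber-degree of a maximal disjoint collection is at most $2g-2$. With $n=2g-2$ disjoint exceptional sections already present, no further exceptional classes remain, so $Y=Y_{\min}$; then $K_Y\cdot F_Y=0$ together with the Gompf--Thurston choice of $[\omega_Y]$ close to $\mathrm{PD}(F_Y)$ gives $K_Y\cdot[\omega_Y]=0$ and hence $\kappa=0$. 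Your positivity-of-intersection argument, as written, does not by itself force $K_{Y_{\min}}$ to be torsion when $b^+>1$, since there is no light-cone lemma available.

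For case~(3) you correctly flag the obstacle, and your proposed fix does not work. Knowing only $K_Y\cdot F_Y=1$ and $b^+>3$ does not, by any Hodge-index or SW argument you have indicated, bound $K_{Y_{\min}}^2\le 0$; there is no inequality relating $(K\cdot F)^2$ and $K^2\cdot F^2$ once $b^+>1$. The route taken in the cited works is again structural: one uses Sato's description of how exceptional multisections saturate the defect $2g-2-K_Y\cdot F_Y$ to control minimality and then compute $K_{\min}^2$, not a bare numerical bound extracted from $e$ and $\sigma$. Absent that input, your case~(3) is a genuine gap.
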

\noindent

Note that the  topological invariant $b^+(X)$ in the theorem can be read off from the positive factorization. Since $\pi_1(X)$ is isomorphic to the quotient of $\pi_1(\Sigma_g)$ by $N(c_1, \ldots, c_l)$, the subgroup normally generated by $c_i$s, we can calculate $b_1(X)$ as the rank of the abelianization of $\pi_1(X)$. Then calculating  $\eu(X)= 4-4g+ l$ and $\sigma(X)$ algorithmically (e.g. by \cite{EN}), we can obtain  $b^+(X)$ through the equality $b^+(X)=(\eu(X)-2+2b_1+\sigma(X))/2$.

\medskip
\section{Unchaining operation} \label{Sec:unchaining}

In this section we will discuss the \emph{unchaining} operation, which is a symplectic surgery that can be interpreted (at least locally) as a monodromy substitution in a Lefschetz fibration. This surgery will play a key role in all our constructions throughout the paper. 

Let $c_1,\ldots,c_{2g+1},\delta_1,\delta_1'\subset \Sigma_g^2$ be simple closed curves shown in Figure~\ref{F:curves1}.  By Lemma~\ref{T:chain relation}, the following relation holds in $\Gamma_g^2$:
\[
(t_{c_1}\cdots t_{c_{2g+1}})^{2g+2} = t_{\delta_1}t_{\delta_1'}. 
\]

\begin{figure}[hbt]
 \centering
     \includegraphics[width=12cm]{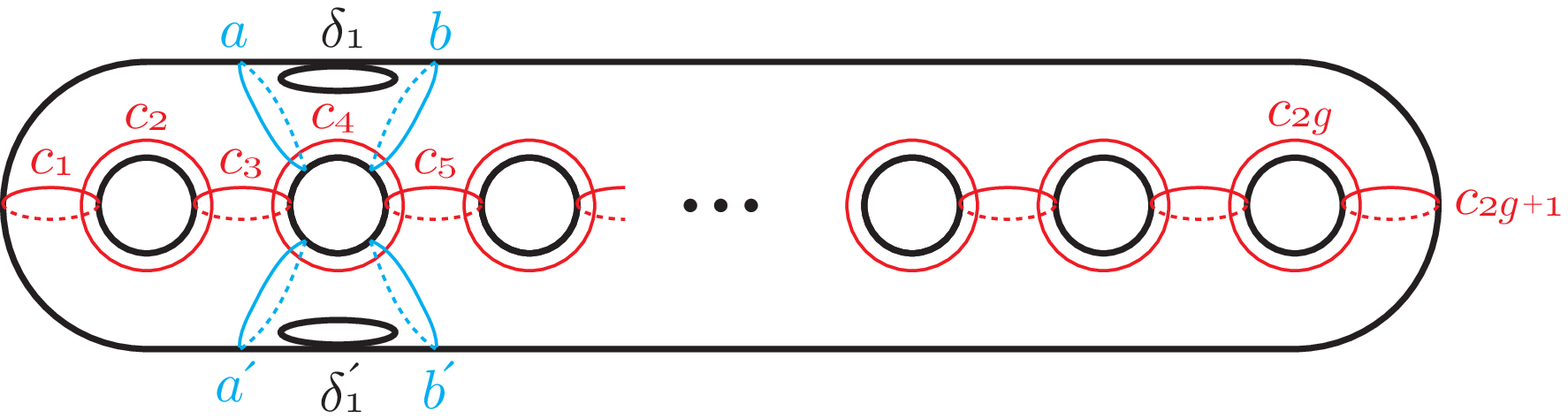}
     \caption{The curves $a,a^\prime,b,b^\prime$ and $c_i$ $(i=1,\ldots,2g+1)$ on $\Sigma_g^2$.}
     \label{F:curves1}
\end{figure}

\noindent Denote the total space of the allowable Lefschetz fibration corresponding to the left and right hand sides of the relation above by $V_g$ and $V_g'$, respectively. One can describe the compact manifolds $V_g$ and $V_g'$ by handlebody diagrams  in Figure~\ref{F:diagramV_gV_g'}.

\begin{figure}[htbp]
\centering
\subfigure[$V_g$ ]{
\includegraphics[width=125mm]{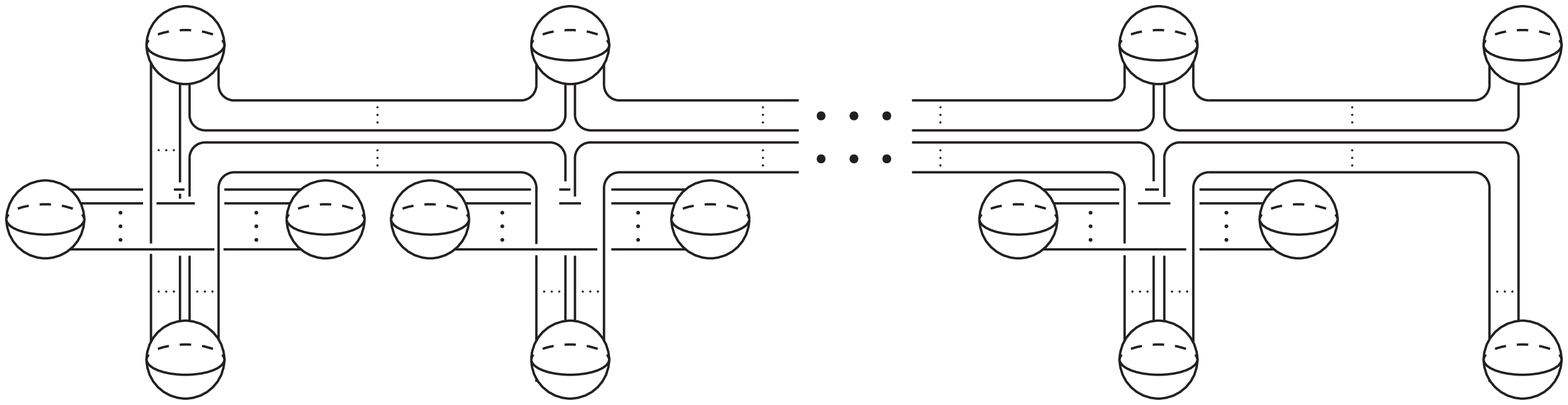}
}
\subfigure[$V_g'$. ]{
\includegraphics[width=125mm]{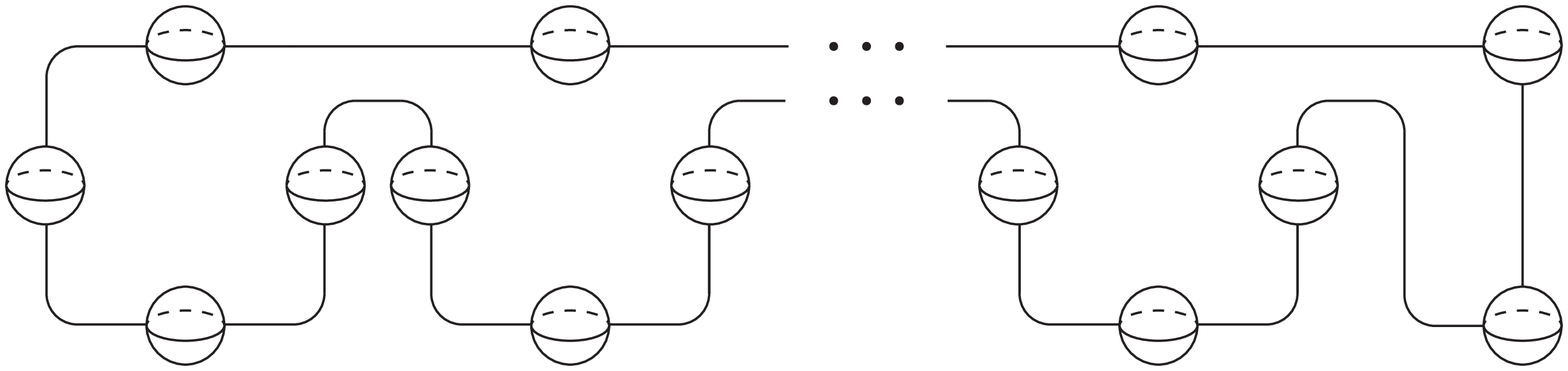}
}
\caption{Handlebody diagrams of $V_g$ and $V_g'$. 
All the $2$--handles in both figures have $(-1)$-framing. }
\label{F:diagramV_gV_g'}
\end{figure}

Taking the Stein structures associated to their allowable Lefschetz fibrations above, we regard $V_g$ and $V'_g$ as Stein domains. (It is a straightforward exercise to turn the handle diagrams  in Figure~\ref{F:diagramV_gV_g'} into diagrams of Stein handle decompositions following \cite{AO}.) Since these fibrations induce the same open book monodromy  on their boundaries, the contact structures induced on $\partial V_g$ and $\partial V_g'$ by the underlying symplectic structures on $V_g$ and $V'_g$ are contactomorphic. Thus, if $V_g$ is a Stein subdomain of a symplectic $4$--manifold $(X, \omega)$, we can excise $V_g$ and glue in $V'_g$ symplectically via any contactomorphism $\varphi: \Pa V_g \to \Pa V'_g$, perhaps after scaling the symplectic form on it (see e.g. \cite{Etnyre}). We will call this particular symplectic cut-and-paste operation \emph{unchaining}:

\begin{definition}\label{unchaining}
Let $(X, \omega)$ be a symplectic $4$--manifold, containing $V_g$ as a Stein submanifold. The symplectic $4$--manifold $(X', \omega')$, where $X'=(X\setminus V_g) \cup_\varphi V_g'$, with $\omega'|_{X \setminus V_g} = \omega_{X \setminus V_g}$ and containing $V_g'$ as a Stein subdomain is then said to be obtained by \emph{unchaining} $(X, \omega)$ along $V_g\subset X$ (or by \emph{$g$--unchaining}, whenever we would like to be specific about the size) . 
\end{definition}

This local surgery realizes a monodromy substitution when $(X, \omega)$ is the total space of a symplectic Lefschetz fibration or pencil $f$, whose monodromy factorization contains $(t_{d_1}\cdots t_{d_{2h+1}})^{2h+2}$ as a subword, where $d_1,\ldots, d_{2h+1}$ are simple closed curves in a reference fiber of $f$ satisfying the condition in Lemma~\ref{T:chain relation}. 
We can then apply a \emph{$C_{2h+1}$-substitution} to the monodromy factorization of $f$, that is, we can substitute a subword $(t_{d_1}\cdots t_{d_{2h+1}})^{2h+2}$ in the factorization with $t_{b_1}t_{b_2}$, where $b_1,b_2$ are curves given in Lemma~\ref{T:chain relation}. 
Since the substitution does not change the right-hand side of the factorization, we obtain another symplectic Lefschetz fibration or pencil $f'$, whose total space is a symplectic $4$--manifold $(X, \omega')$ obtained by unchaining $(X, \omega)$ along $V_g$. (In this case we choose $\varphi:\Pa V_g \to \Pa V_g'$ to be a fiber-preserving contactomorphism between the boundary open books, which is identity along the fibers.) We will build all our examples in this paper from this perspective. 

\medskip
In the remainder of this section, we will explain how the Euler characteristic, the signature and the fundamental group of $X$ and $X'$ are related by the unchaining operation along $V_g \subset X$. 

Let's start with the invariants for the subdomains $V_g$ and $V'_g$. From the handle decompositions given above, it is easy to see that the Euler characteristics $e(V_g)$ and $e(V_g')$ are respectively equal to $2(2g^2+2g+1)$ and $-2g +2$. The manifold $V_g$ is the complement of the union $U$ of neighborhoods of a regular fiber and two $(-1)$-sections in a \emph{hyperelliptic} Lefschetz fibration over $S^2$ with $(2g+1)(2g+2)$ irreducible singular fibers and no reducible ones. Therefore, we can deduce from \cite{EN} that the signature of the hyperelliptic fibration over $S^2$ is $-2(g+1)^2$. Since the signature of $U$ is $-1$, by the Novikov additivity,  the signature of $V_g$ is equal to $-2(g+1)^2+1$. On the other hand, since $V_g'$ is a disk bundle over a genus-$g$ surface with Euler number $-2$, the signature $\sigma(V_g')= -1$.  Now by the additivity of Euler characteristic and signature for gluing compact $4$--manifolds along $3$--manifolds, we can conclude that:

\begin{proposition}\label{T:topinv_unchaining1}
Let $X'$ be obtained by unchaining $X$ along $V_g \subset X$. Then,
\begin{align*}
e(X') & = e(X) -2g (2g+3), \\
\sigma(X') &= \sigma(X) + 2g(g+2).  
\end{align*}
\end{proposition}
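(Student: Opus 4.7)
The plan is purely additive: express both $e(X') - e(X)$ and $\sigma(X') - \sigma(X)$ as the differences of the local invariants of $V_g'$ and $V_g$, using inclusion--exclusion for Euler characteristic and Novikov additivity for signature across the common boundary $\partial V_g = \partial V_g'$. Since $X$ and $X'$ are obtained from the same complement $X \setminus V_g$ by regluing $V_g$ or $V_g'$ respectively, the whole proposition reduces to evaluating the four numbers $e(V_g)$, $e(V_g')$, $\sigma(V_g)$, $\sigma(V_g')$.

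First I would read off the two Euler characteristics directly from the handle decompositions in Figure~\ref{F:diagramV_gV_g'}, using the standard identity $\chi = \chi(\text{fiber}) + (\text{number of critical points})$ for any allowable Lefschetz fibration over $D^2$. With $\chi(\Sigma_g^2) = -2g$ and critical-point counts $(2g+1)(2g+2)$ and $2$ respectively, this yields $e(V_g) = 2(2g^2+2g+1)$ and $e(V_g') = -2g+2$.

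The signature of $V_g'$ is immediate from its description as a $D^2$-bundle over $\Sigma_g$ of Euler number $-2$: $H_2(V_g')$ is generated by the zero section with self-intersection $-2$, so $\sigma(V_g') = -1$. The main computation is $\sigma(V_g)$, which I would obtain as already sketched in the excerpt: realize $V_g$ as the complement, in a closed hyperelliptic genus-$g$ Lefschetz fibration $Y \to S^2$ realizing the chain relation, of a neighborhood $U$ of a regular fiber together with two $(-1)$-sections. Endo's signature formula from \cite{EN}, applied to $(2g+1)(2g+2)$ nonseparating hyperelliptic vanishing cycles, gives $\sigma(Y) = -2(g+1)^2$; a short diagonalization of the $3\times 3$ intersection form on the subspace of $H_2(U)$ spanned by the fiber and the two sections gives $\sigma(U) = -1$. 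Novikov additivity then forces $\sigma(V_g) = \sigma(Y) - \sigma(U) = -2(g+1)^2 + 1$.

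Assembling the four local numbers and simplifying the arithmetic $e(V_g')-e(V_g) = -2g(2g+3)$ and $\sigma(V_g')-\sigma(V_g) = 2g(g+2)$ yields the claim. The one nontrivial input is Endo's formula for $\sigma(Y)$; everything else is bookkeeping with handle decompositions and a routine linear-algebra computation on the intersection lattice of $U$, so the main obstacle, if any, is simply making sure that the identification of $V_g$ as a complement in the hyperelliptic fibration $Y$ matches the handle picture on the nose.
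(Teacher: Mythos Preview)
Your proposal is correct and follows essentially the same route as the paper: the paper computes exactly the same four local invariants $e(V_g)$, $e(V_g')$, $\sigma(V_g)$, $\sigma(V_g')$ in the paragraphs preceding the proposition (via the handle decomposition, the disk-bundle description of $V_g'$, and the embedding of $V_g$ in the closed hyperelliptic fibration together with the Endo--Nagami signature formula), and then invokes additivity of $e$ and Novikov additivity of $\sigma$ across the common boundary. Your only elaboration is spelling out the $3\times 3$ intersection-form computation for $\sigma(U)=-1$, which the paper simply asserts.
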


\smallskip
The relation between the fundamental groups of $X$ and $X'$ is more involved, since in general it non-trivially depends on the fundamental group of the complement of $V_g$ in $X$.
The manifold $\Pa V_g$ (and thus $X$) contains a surface $\Sigma\cong \Sigma_g^2$ which is a fiber of the boundary open book. We take a generating set $\{a_1,b_1,\ldots,a_g,b_g,d\}$ of $\pi_1(\Sigma)$ as shown in Figure~\ref{F:generator_pi1}. 
\begin{figure}[htbp]
\centering
\includegraphics[width=70mm]{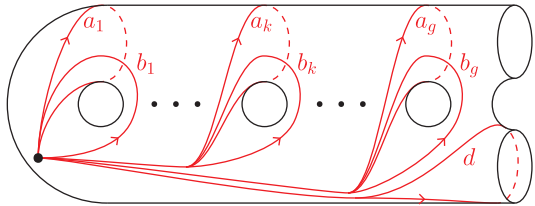}
\caption{A generating set of a fiber of the open book of $\Pa V_g$. }
\label{F:generator_pi1}
\end{figure}
We also denote the elements in $\pi_1(\overline{X\setminus V_g})$ represented by loops in Figure~\ref{F:generator_pi1} by $a_i, b_j$, and so on. Recall that $V'_g$ admits a handle decomposition with one $0$--handle, $2g+1$ $1$--handles, and two $2$--handles as in Figure~\ref{F:diagramV_gV_g'}. Now, $X'$ is obtained from $\overline{X\setminus V_g}$ by attaching this handlebody upside down to it. Then, the cellular decomposition induced by this handle decomposition implies that $X'$ is obtained from $\overline{X\setminus V_g}$ by attaching two $2$--cells to an open book fiber $\Sigma$ of $\partial(\overline{X\setminus V_g}) = \partial V'_g$ along loops freely homotopic to $d$\,and {$d^{-1}\cdot\prod_{i=1}^g [a_i,b_i]$}, and then attaching $2g+1$ $3$--cells and a $4$--cell.  By a standard consequence of Seifert Van-Kampen, the $2$--cells yield further relations via their attaching maps, while the higher dimensional cells have no effect on $\pi_1$. We therefore get:

\begin{proposition}\label{T:topinv_unchaining2}

The fundamental group of $X'$ is isomorphic to
\begin{center}
$\pi_1(\overline{X\setminus V_g}) \, / \, N(d,\prod_{i=1}^g [a_i,b_i])$
\end{center}
where $N(d,\prod_{i=1}^g [a_i,b_i])$ is the subgroup of $\pi_1(\overline{X\setminus V_g})$ normally generated by $d$,\,and $\prod_{i=1}^g [a_i,b_i]$. In particular, if $\overline{X\setminus V_g}$ is simply-connected, so is $X'$. 
\end{proposition}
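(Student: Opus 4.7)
The plan is to apply Seifert--van Kampen to the decomposition $X' = \overline{X \setminus V_g} \cup_\varphi V_g'$, exploiting the cell structure of $V_g'$ relative to a page $\Sigma$ of the open book on $\partial V_g'$. The key observation is that, up to homotopy, $V_g'$ is built from the page $\Sigma \cong \Sigma_g^2$ by attaching two $2$--cells (one along each boundary circle of $\Sigma$) plus cells of dimension $\geq 3$, and only the $2$--cells affect $\pi_1$.

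First, I record the handle decomposition of $V_g'$ depicted in Figure~\ref{F:diagramV_gV_g'}(b): one $0$--handle, $2g+1$ $1$--handles assembling a thickened page $\Sigma$ of genus $g$ with two boundary components, and two $2$--handles attached along the boundary-parallel vanishing cycles $\delta_1$ and $\delta_1'$. Since these vanishing cycles are parallel within the fiber to the two boundary circles $d$ and $d'$ of $\Sigma$, the cores of the $2$--handles cap off $d$ and $d'$ up to homotopy inside $V_g'$; hence $V_g'$ deformation retracts onto the closed genus--$g$ surface obtained by gluing two disks to $\Sigma$ along $d$ and $d'$.

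Next, since $\partial(\overline{X\setminus V_g}) = \partial V_g'$ contains the page $\Sigma$, the attachment of $V_g'$ to $\overline{X\setminus V_g}$ via $\varphi$, viewed from $\overline{X\setminus V_g}$ upwards, amounts to first attaching two $2$--cells along loops in $\Sigma$ freely homotopic to $d$ and $d'$, and then attaching $2g+1$ $3$--cells and one $4$--cell (coming from the upside-down $1$- and $0$--handles of $V_g'$). Using the generating set $\{a_1,b_1,\ldots,a_g,b_g,d\}$ of $\pi_1(\Sigma)$ from Figure~\ref{F:generator_pi1} together with the standard surface relation, the second boundary circle $d'$ is freely homotopic to $d^{-1} \cdot \prod_{i=1}^g [a_i,b_i]$.

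Applying Seifert--van Kampen, the two $2$--cells impose the relations $d = 1$ and $d^{-1} \prod_{i=1}^g [a_i,b_i] = 1$ on $\pi_1(\overline{X\setminus V_g})$, while the higher-dimensional cells have no effect on $\pi_1$. This yields the claimed presentation $\pi_1(X') \cong \pi_1(\overline{X\setminus V_g}) / N(d, \prod_{i=1}^g [a_i,b_i])$, and simple-connectedness of $\overline{X\setminus V_g}$ then transfers to $X'$ immediately. The main subtlety is verifying that the attaching loops of the two $2$--cells are indeed freely homotopic to $d$ and $d'$ on the page $\Sigma$; this requires carefully tracking the handle structure and checking that the boundary-parallel vanishing cycles $\delta_1,\delta_1'$ become isotopic to the corresponding binding components in $\partial V_g'$.
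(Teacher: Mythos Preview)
Your proof is correct and follows essentially the same approach as the paper's: turn the handle decomposition of $V_g'$ upside down, so that attaching $V_g'$ to $\overline{X\setminus V_g}$ amounts to attaching two $2$--cells along loops freely homotopic to $d$ and $d^{-1}\prod_{i=1}^g[a_i,b_i]$ on the page $\Sigma$, followed by $2g+1$ $3$--cells and a $4$--cell, and then invoke Seifert--van Kampen. The only extra content in your write-up is the side remark that $V_g'$ deformation retracts to a closed genus--$g$ surface, which is not needed for the argument but is consistent with the paper's earlier observation that $V_g'$ is a disk bundle over $\Sigma_g$.
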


\medskip
\begin{remark}
Propositions \ref{T:topinv_unchaining1} and \ref{T:topinv_unchaining2} show that in general unchaining operation is not equivalent to a sequence of rational blow-downs and blow-ups. For instance, if a simply-connected manifold $X'$ can be obtained by applying $(2g+1)$-unchaining to another simply-connected manifold $X$ (and we'll see many instances in this paper), we deduce that $b^+(X)-b^+(X')$ is equal to $g(g+1)$. However, rational blow-downs and blow-ups do not change $b^+$. 
\end{remark}

By Propositions~\ref{T:topinv_unchaining1} and \ref{T:topinv_unchaining2}, the unchaining operation decreases the second Betti number of a $4$--manifold, so it is a handy operation for deriving a \emph{smaller} symplectic $4$--manifold. In addition, the unchaining might give rise to new $(-1)$-spheres in $X'$, as we will see in our examples in this paper, which we can further blow-down to obtain an even smaller manifold. The same examples will demonstrate that the symplectic Kodaira dimension is also non-increasing under the unchaining operation, i.e. $\kappa (X') \leq \kappa(X)$, which we conjecture to be true in general:

\begin{conjecture}
If  $(X', \omega')$ is obtained from the symplectic $4$--manifold $(X, \omega)$ by unchaining, then their symplectic Kodaira dimensions satisfy $\kappa(X') \leq \kappa(X)$. 
\end{conjecture}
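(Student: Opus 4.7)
The plan is to leverage the monodromy substitution interpretation of unchaining together with Theorem \ref{KodFromLF}, complemented by direct topological computations. First, I would try to reduce to the pencil setting: by Donaldson's theorem $(X,\omega)$ admits a symplectic Lefschetz pencil, and one attempts to arrange, possibly after further positive stabilizations, a pencil containing $V_g$ as a sub-fibration so that the unchaining of $X$ along $V_g$ corresponds to a $C_{2g+1}$-substitution in the global monodromy factorization. In this setting, neither the fiber genus $g_f$ nor the number of base points $n$ changes; only the number of interior critical points drops by $(2g+1)(2g+2)-2 = 4g^2+6g$.

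Next, I would extract the numerical consequences. Since $c_1^2 = 2e + 3\sigma$ is determined by the almost complex structure, Proposition \ref{T:topinv_unchaining1} yields
\[
K_{X'}^2 = K_X^2 - 2g^2,
\]
and, when $b_1$ is preserved (the typical case, in view of Proposition \ref{T:topinv_unchaining2}), a parallel computation gives $b^+(X') = b^+(X) - g(g+1)$. Thus unchaining strictly decreases both $K^2$ of the total space and $b^+$. Because in the monodromy setting the comparison of $n$ with $2g_f-2$ is preserved, the hypotheses of Theorem \ref{KodFromLF} transition only in the direction of smaller $\kappa$ when $b^+$ drops across a critical threshold; this handles the regime where Theorem \ref{KodFromLF} applies to both $X$ and $X'$.

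The remaining cases require more care. If $\kappa(X)=2$ there is nothing to prove. If $X$ is minimal with $\kappa(X)\in\{0,1\}$, then $K_X^2=0$, and the formula above gives $K_{X'}^2=-2g^2<0$, forcing $X'$ to be non-minimal; I would then successively blow down the new exceptional spheres produced by the substitution and verify that $K_{\min,X'}\cdot[\omega_{\min,X'}] \leq K_{\min,X}\cdot[\omega_{\min,X}]$, using that the pair of boundary Dehn twists $t_{\delta_1}t_{\delta_1'}$ replacing the chain $(t_{c_1}\cdots t_{c_{2g+1}})^{2g+2}$ inside $V_g'$ carries a considerably smaller canonical contribution.

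The main obstacle, in my view, is the control of the pairing $K_{\min}\cdot [\omega_{\min}]$: while $K^2$ is purely topological, this pairing depends on the symplectic form, which changes subtly under the cut-and-paste. A natural tool is the exceptional multisection viewpoint of \cite{Sato, BaykurHayano}, in which the canonical class is represented by a positive multisection of the pencil; the effect of the $C_{2g+1}$-substitution on how such a multisection meets $V_g$ can then be analyzed directly in the fiber surface. A secondary obstacle is that a general unchaining does not a priori arise from a global pencil; for these cases I would fall back on the complex-algebraic interpretation of unchaining as a partial smoothing of the singularity obtained by contracting a chain of $(-2)$-curves, and invoke upper semi-continuity of Kodaira dimension under deformation to close the argument in the symplectic category.
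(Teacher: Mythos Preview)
The statement you are attempting to prove is presented in the paper as a \emph{conjecture}, not a theorem; the authors give no proof and only offer their examples as supporting evidence. So there is nothing in the paper to compare your argument against, and any complete argument would in fact resolve an open problem.

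Your outline does not close the gap. The central difficulty is exactly the one you flag but do not overcome: controlling $K_{\min}\cdot[\omega_{\min}]$ after the surgery. The reduction to a global pencil containing $V_g$ as a subfibration is not justified---Donaldson's pencils are produced by asymptotic methods and there is no mechanism to force a prescribed Stein subdomain to sit fiberwise inside one, even after stabilization---and your own ``secondary obstacle'' paragraph concedes this. Even granting that reduction, Theorem~\ref{KodFromLF} only determines $\kappa$ in the narrow range $n\geq 2g_f-3$, so the claim that its hypotheses ``transition only in the direction of smaller $\kappa$'' covers very little. In the $\kappa(X)\in\{0,1\}$ case, the inequality $K_{X'}^2=-2g^2<0$ does force non-minimality when $b^+(X')>1$, but after blowing down you still need to compare $K_{\min}\cdot[\omega_{\min}]$, and nothing in the multisection formalism of \cite{Sato,BaykurHayano} lets you track that pairing through an arbitrary Stein cut-and-paste. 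Finally, the complex-analytic fallback (viewing unchaining as a smoothing of a contracted $(-2)$-curve and appealing to semicontinuity) is speculative: unchaining is defined symplectically via any contactomorphism of $\partial V_g$, and there is no reason it can be realized in the holomorphic category in general---the paper itself only raises this as a possibility in Remark~\ref{contracting}.
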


\medskip
\begin{remark}[Surgery along even chains] \label{evenchains}
Almost every aspect of the unchaining surgery we discuss here applies likewise to surgeries along even number of chains, which correspond to a similar relation in the mapping class group, where one replaces a product of Dehn twists along an even number of chains with a single Dehn twist along the boundary of their tubular neighborhood \cite{FM_MCG}. Although in this article we only discuss the odd case for brevity, the even case similarly has many interesting applications, readily available in the existing literature: For instance, even chain monodromy substitution was conclusively  employed in Mustafa Korkmaz and the first author's reverse engineering of small positive factorizations in \cite{BaykurKorkmaz}, and the examples of genus--$2$ Lefschetz fibrations in \cite{BaykurKorkmaz} readily demonstrate that the analogue of the unchaining surgery on even chains, too, can decrease the Kodaira dimension. The exotic  rational surfaces constructed in \cite{BaykurKorkmaz} and several symplectic Calabi-Yau homotopy $\K$ and homotopy Enriques surfaces in \cite{BaykurHayano} can all be seen to be obtained by such a surgery from a symplectic surfaces of Kodaira dimension two. (By the inverse of the operation, one replaces a reducible fiber component with a Stein subdomain corresponding to the even chain). 


\noindent We note that one difference in the case of a surgery along an even chain is that the operation now swaps a Stein subdomain with a \emph{symplectic filling} given by the neighborhood of a $(-1)$--curve, which is not a Stein filling of the contact structure on the boundary supported by the obvious open book. 
 \end{remark}

\medskip
\section{New symplectic pencils via unchaining}\label{Sec:newpencils}

Here we will produce new symplectic Lefschetz pencils from a family of pencils on complex surfaces of general type, by carefully applying the unchaining operation. As we keep track of the associated monodromies, we will then look at their lifts to detect a sufficient number of $(-1)$--sections to determine the Kodaira dimension of the new symplectic $4$--manifolds, using Theorem~\ref{KodFromLF}.

\subsection{Positive factorizations for a family of holomorphic pencils} \label{hurwitz} \

Let $c_1,\ldots, c_{2g+1}, \delta_1,\delta_1'\subset \Sigma_g^2$ be simple closed curves shown in Figure~\ref{F:curves1}. 
We can obtain the closed surface $\Sigma_g$ by capping $\Pa \Sigma_g^2$ by two disks. 
In this way we regard $\Sigma_g^2$ as a subsurface of $\Sigma_g$.  
By Lemma~\ref{T:chain relation} we have: 
\begin{align}
(t_{c_1}t_{c_2}\cdots t_{c_{2g+1}})^{2g+2} & = 1  \mbox{ in }\Gamma_g\mbox{, and} \nonumber\\
(t_{c_1}t_{c_2}\cdots t_{c_{2g+1}})^{2g+2} &= t_{\delta_1}t_{\delta_1^\prime} \mbox{ in }\Gamma_g^2. \label{Eq:chain relation boundary}
\end{align}
These positive factorizations prescribe a Lefschetz fibration $(Z_g, f)$ and a pencil $(Z_g',  f')$, respectively, where $(Z_g, f)$ is obtained from $(Z'_g, f')$ by blowing-up the two base points. We easily calculate the Euler characteristic as \mbox{$e(Z_g')=4g^2+2g+4$}, and thus $\eu(Z_g)=4g^2+2g+6$.
Here the left-hand side of the relation \eqref{Eq:chain relation boundary} is obtained by lifting a braid monodromy of a non-singular projective curve of degree $2(g+1)$ in $\CP$ under the double branched covering branched at $2g+2$ points (see \cite[Corollary VIII.2.3]{MoishezonTeicher}), so $f'$ is a holomorphic map, which is the composition of the double branched covering $p:Z_g'\to \CP$ of $\CP$ branched along a degree $2(g+1)$ non-singular curve, and the linear projection from $\CP$ to  $\CL$. It is easy to see that $Z_g'$ is simply-connected. Moreover, as shown in \cite{Hitchin}, the canonical bundle $K_X$ of the covering complex surface $Z'_g$ is isomorphic to $p^\ast (H^{\otimes(g-2)})$ and $p^\ast(c_1(H))\in H^2(Z_g')$ is primitive, where $H$ is the holomorphic line bundle over $\CP$ defined by a hyperplane section. In particular the signature $\sigma(Z_g')=-2(g+1)^2+2$, and thus $\sigma(Z_g)=-2(g+1)^2$, and $Z_g'$ is spin if and only if $g$ is even. Furthermore, $Z_g'$ (and its blow-up $Z_g$) is a complex surfaces of general type since $H$ is very ample.

\medskip

While it is decidedly easier to identify the Stein subdomains we would like to perform unchaining surgeries along as subfactorizations in positive factorizations of Lefschetz pencils, it is still often the case that the desired subfactorization (just like the desired handle decomposition) only emerges after deliberate manipulations of the original monodromy. The rest of this  subsection is devoted precisely to this cause, with the sole aim of deriving a suitable positive factorization of the boundary multi-twist in $\Gamma_g^2$, for each $g\geq 3$,  which is Hurwitz equivalent the factorization \eqref{Eq:chain relation boundary}:

\begin{proposition}\label{lem1}
For $g\geq 3$, let $d_j=t_{c_{j-3}}^{-1}t_{c_{j-2}}^{-1}t_{c_{j-1}}^{-1}(c_j)$ and  $e_j=t_{c_{j-3}} t_{c_{j-2}} t_{c_{j-1}}(c_j)$, for all $j=4,5,\ldots,2g+1$, where the curves $c_j,d_j,e_j$ are shown in Figures\ref{F:curves1} and\ref{vanishingcycles}.\linebreak Also set, for a shorthand notation;
\begin{align*}
&D_g=t_{d_4} t_{d_5} \cdots t_{d_{2g+1}}& &\mathrm{and}& &E_g=t_{e_{2g+1}} \cdots t_{e_5} t_{e_4}.&
\end{align*}
Then, the following equations hold in $\Gamma_g^2$:
\begin{align*}
&t_{\delta_1} t_{\delta_1^\prime} = (t_{c_1} t_{c_2} t_{c_3})^{4g} D_g E_g (t_{c_5} t_{c_6} \cdots t_{c_{2g+1}})^{2g-2}  &(g:\mathrm{odd}) \\
&t_{\delta_1} t_{\delta_1^\prime} =  (t_{c_1} t_{c_2} t_{c_3})^{4(g-1)+2} (t_{c_3} t_{c_2} t_{c_1})^2 D_g E_g (t_{c_5} t_{c_6} \cdots t_{c_{2g+1}})^{2g-2} &(g:\mathrm{even}) 
\end{align*}
which are Hurwitz equivalent to the positive factorizations in \eqref{Eq:chain relation boundary}.
\end{proposition}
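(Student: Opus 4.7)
The plan is to verify the claimed factorizations by a systematic sequence of Hurwitz moves applied to $(t_{c_1}t_{c_2}\cdots t_{c_{2g+1}})^{2g+2}$. Every such move preserves the factorization class, so the two sides express the same element $t_{\delta_1}t_{\delta_1'}\in \Gamma_g^2$; what must be shown is that the source word can be combinatorially rearranged into the target. Three ingredients are at our disposal: (a) the commutativity $t_{c_i}t_{c_j}=t_{c_j}t_{c_i}$ whenever $|i-j|\geq 2$; (b) the braid relation $t_{c_i}t_{c_{i+1}}t_{c_i}=t_{c_{i+1}}t_{c_i}t_{c_{i+1}}$; and (c) the three-fold Hurwitz identities
\[
t_{c_{j-3}}t_{c_{j-2}}t_{c_{j-1}}\,t_{c_j}\;\sim\; t_{e_j}\,t_{c_{j-3}}t_{c_{j-2}}t_{c_{j-1}}\quad\text{and}\quad t_{c_j}\,t_{c_{j-3}}t_{c_{j-2}}t_{c_{j-1}}\;\sim\; t_{c_{j-3}}t_{c_{j-2}}t_{c_{j-1}}\,t_{d_j},
\]
valid for $j\geq 4$ and immediate from the definitions of $d_j,e_j$ as the images of $c_j$ under $(t_{c_{j-3}}t_{c_{j-2}}t_{c_{j-1}})^{\mp 1}$, applied via three successive elementary moves $(t_a,t_b)\mapsto (t_{t_a(b)},t_a)$.

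The strategy is to shepherd the \emph{high-indexed} twists $t_{c_j}$, for $j\geq 4$, out of the interior of the word, collecting the copies of $t_{c_1},t_{c_2},t_{c_3}$ on the left and the upper-chain product $(t_{c_5}\cdots t_{c_{2g+1}})^{2g-2}$ on the right. Using (a), any $t_{c_j}$ with $j\geq 5$ slides freely past a $t_{c_1}t_{c_2}t_{c_3}$ block, so to begin I would migrate $2g-2$ full copies of the upper chain to the rightmost position. The connector $t_{c_4}$, which obstructs full commutation with the lower chain, requires the three-fold move (c): each occurrence of $t_{c_4}$ that must cross a $t_{c_1}t_{c_2}t_{c_3}$ segment produces either a $t_{d_4}$ (right-push) or a $t_{e_4}$ (left-push). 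Iterating upward along the chain, a cascade of such moves deals with all remaining $t_{c_j}$, $j\geq 4$, converting one copy each to $t_{d_j}$ and to $t_{e_j}$ and depositing them in the positions of $D_g=t_{d_4}\cdots t_{d_{2g+1}}$ and $E_g=t_{e_{2g+1}}\cdots t_{e_4}$.

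I would execute this by induction on $g$, with base case $g=3$ verified by direct computation. In the inductive step, after peeling off the contribution of the topmost rung $c_{2g+1}$ via one right-push and one left-push past $t_{c_{2g-2}}t_{c_{2g-1}}t_{c_{2g}}$, the residual factorization involves only $c_1,\ldots,c_{2g-1}$ and so falls under the genus-$(g-1)$ hypothesis. The principal obstacle will be bookkeeping the parity: each application of (c) that pushes a twist to the left through the lower $3$-chain effectively reverses the orientation of that chain, so after an even number of such pushes the block $t_{c_1}t_{c_2}t_{c_3}$ is restored, while after an odd number a reversed block $t_{c_3}t_{c_2}t_{c_1}$ remains. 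This is precisely the mechanism expected to produce the additional $(t_{c_3}t_{c_2}t_{c_1})^2$ factor when $g$ is even and to leave no residue when $g$ is odd; making this parity-tracking precise, and confirming that the resulting exponent of $(t_{c_1}t_{c_2}t_{c_3})$ comes out to $4g$ (respectively $4g-2$), will be the most delicate step of the proof.
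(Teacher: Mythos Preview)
Your toolkit is correct---commutation, braid relations, and the three-fold slides producing $d_j$ and $e_j$ are exactly the ingredients used---but the inductive scheme on $g$ does not work as stated. The word $(t_{c_1}\cdots t_{c_{2g+1}})^{2g+2}$ contains $2g+2$ copies of $t_{c_{2g+1}}$ (and of $t_{c_{2g}}$), so ``one right-push and one left-push'' cannot peel off the top rung: only two of those copies become $t_{d_{2g+1}}$ and $t_{e_{2g+1}}$, while $2g-2$ of them must survive inside the tail $(t_{c_5}\cdots t_{c_{2g+1}})^{2g-2}$, and $t_{c_{2g}}$ has not been touched at all. Hence the residual word does \emph{not} involve only $c_1,\ldots,c_{2g-1}$; and even if it did, the genus-$(g-1)$ statement is about a $(2g)$-th power rather than a $(2g+2)$-th, so the hypothesis would not apply directly. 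Separately, your parity mechanism is mis-identified: the slides in (c) leave the block $t_{c_{j-3}}t_{c_{j-2}}t_{c_{j-1}}$ in the \emph{same} order on both sides, so they cannot by themselves produce any reversed $3$-chain.

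The paper's argument avoids induction on $g$. It splits the exponent as $(2g+2)=4+(2g-2)$ and treats the two pieces separately via a descending induction on a sliding-window parameter $k\in\{1,\ldots,2g-2\}$: one obtains $(t_{c_1}\cdots t_{c_{2g+1}})^4 \sim (t_{c_1}t_{c_2}t_{c_3})^4 (t_{c_3}t_{c_2}t_{c_1})^{2g-2} D_g$ and $(t_{c_1}\cdots t_{c_{2g+1}})^{2g-2} \sim E_g\,(t_{c_1}t_{c_2}t_{c_3})^{2g-2} (t_{c_5}\cdots t_{c_{2g+1}})^{2g-2}$. The reversed blocks arise here from first re-grouping four shifted rows into descending length-$4$ columns $t_{c_{i+3}}t_{c_{i+2}}t_{c_{i+1}}t_{c_i}$, not from the slides (c). The parity then enters through a separate Hurwitz equivalence $(t_{c_3}t_{c_2}t_{c_1})^4 \sim (t_{c_1}t_{c_2}t_{c_3})^4$: since $2g-2\equiv 0\pmod 4$ for $g$ odd and $\equiv 2\pmod 4$ for $g$ even, all reversed blocks flip in the odd case while a residual $(t_{c_3}t_{c_2}t_{c_1})^2$ survives in the even case. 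This is the mechanism your sketch is missing.
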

\begin{figure}[hbt]
 \centering
     \includegraphics[width=12cm]{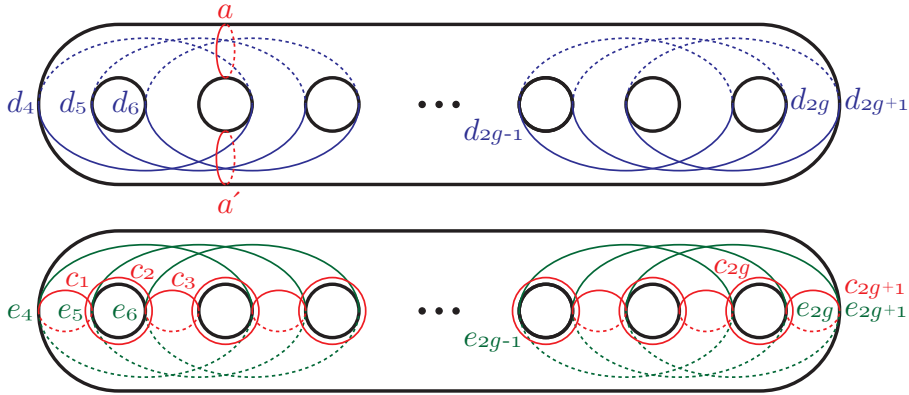}
     \caption{The curves $d_j,e_j$ on $\Sigma_g$ $(j=4,\ldots,2g+1)$.}
     \label{vanishingcycles}
\end{figure}

The proof of this proposition will require some preparation, through a sequence of technical lemmas,  Lemma~\ref{lem1-1},~\ref{lem1-2}, and \ref{lem1-3} we prove below. 

For $m=1,2,\ldots,2g-2$, and $l=m,m+1,m+2$, the braid relators amount to following Hurwitz equivalences:
\begin{align}
&t_{c_l} \cdot t_{c_{m+3}} t_{c_{m+2}} t_{c_{m+1}} t_{c_m} \sim t_{c_{m+3}} t_{c_{m+2}} t_{c_{m+1}} t_{c_m} \cdot t_{c_{l+1}}, \label{hurwitz1}\\
&t_{c_m} t_{c_{m+1}} t_{c_{m+2}} t_{c_{m+3}} \cdot t_{c_l} \sim t_{c_{l+1}} \cdot t_{c_m} t_{c_{m+1}} t_{c_{m+2}} t_{c_{m+3}}. \label{hurwitz2}
\end{align}
where, and hereon,  $\sim$ denotes {Hurwitz equivalence of positive factorizations}.

In all of the following lemmas, $g \geq 3$, and the curves $c_j, d_j$ and  $e_j$ are the ones in Figures \ref{F:curves1} and \ref{vanishingcycles}.

\begin{lemma}\label{lem1-1}
$(t_{c_1} t_{c_2} \cdots t_{c_{2g+1}})^4 \sim (t_{c_1} t_{c_2} t_{c_3})^4 (t_{c_3} t_{c_2} t_{c_1})^{2g-2} t_{d_4} t_{d_5} \cdots t_{d_{2g+1}}$ \ in $\Gamma_g^2$. 
\end{lemma}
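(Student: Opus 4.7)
The strategy is a direct Hurwitz-equivalence computation built around one key identity. From $d_j = t_{c_{j-3}}^{-1} t_{c_{j-2}}^{-1} t_{c_{j-1}}^{-1}(c_j)$ one obtains immediately
\[
t_{c_j} \cdot t_{c_{j-1}} t_{c_{j-2}} t_{c_{j-3}} \, = \, t_{c_{j-1}} t_{c_{j-2}} t_{c_{j-3}} \cdot t_{d_j}, \tag{$\star$}
\]
which says that a twist $t_{c_j}$ preceding a \emph{descending} triple of three adjacent twists is Hurwitz-equivalent to the same triple followed by $t_{d_j}$. A quick count verifies that the lengths of the two factorizations in the lemma match: both sides have $8g+4$ factors; on the left there are $4(2g-2)=8g-8$ twists of index $\geq 4$, whereas on the right $2g-2$ of those become the $t_{d_j}$'s and the remaining $3(2g-2)=6g-6$ descend to low-index twists forming the block $(t_{c_3}t_{c_2}t_{c_1})^{2g-2}$.

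The plan is to extract the $t_{d_j}$'s in a cascade by iterating $(\star)$ at indices $j=2g+1, 2g, \ldots, 4$. Each application of $(\star)$ converts a $t_{c_j}$ plus three descending adjacent twists into three shifted-down descending twists plus $t_{d_j}$; since $d_j$ is disjoint from $c_1,c_2,c_3$ (it is obtained from $c_j$ by twisting only along $c_{j-3},c_{j-2},c_{j-1}$), each newly produced $t_{d_j}$ commutes past the low-index tails as it accumulates at the right end. In particular, one complete cascade shows that a long descending chain $t_{c_{2g+1}} t_{c_{2g}} \cdots t_{c_1}$ is Hurwitz-equivalent to $t_{c_3}t_{c_2}t_{c_1}\cdot t_{d_4}t_{d_5}\cdots t_{d_{2g+1}}$. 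The task is therefore to use the sliding relations \eqref{hurwitz1}, \eqref{hurwitz2} and the commutations among the $t_{c_i}$'s with $|i-j|\geq 2$ to reconfigure the four copies of $t_{c_1}\cdots t_{c_{2g+1}}$ in $(t_{c_1}\cdots t_{c_{2g+1}})^4$, keeping the leading block $t_{c_1}t_{c_2}t_{c_3}$ of each copy fixed, so that the remaining twists assemble into descending chains on which the cascade can be run.

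The main difficulty is the combinatorial bookkeeping required to arrange descending chains at the right positions across the four copies. Each individual Hurwitz move is routine (a commutation or a braid relation), but coordinating them so that the output lands in exactly the claimed form is delicate. The cleanest presentation is likely by induction on $g$: the base case $g=3$ can be verified by an explicit, finite Hurwitz-move calculation; and for the inductive step $g-1\to g$ the eight new twists contributed by $(t_{c_{2g}}t_{c_{2g+1}})$ in the four copies of $t_{c_1}\cdots t_{c_{2g+1}}$ must be absorbed into two new $(t_{c_3}t_{c_2}t_{c_1})$-blocks and the two new generators $t_{d_{2g}},t_{d_{2g+1}}$ appearing on the right-hand side (counting: $3\cdot 2+2=8$), which is achieved by one additional cascade spanning the corresponding tails. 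The parity of $g$ plays no role at this step; that distinction only enters the subsequent lemmas, where the $t_{e_j}$'s are introduced.
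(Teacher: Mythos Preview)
Your key identity $(\star)$ is correct and is indeed the engine of the argument, but the cascade you describe breaks down at the commutation step. The claim that $d_j$ is disjoint from $c_1,c_2,c_3$ is false: since $[d_j]=[c_{j-3}]+[c_{j-2}]+[c_{j-1}]+[c_j]$ in $H_1(\Sigma_g;\Z/2\Z)$, one has $i(d_j,c_{j-4})\equiv i(c_{j-3},c_{j-4})=1$, so $d_j$ always meets $c_{j-4}$. In particular $d_5$ meets $c_1$, $d_6$ meets $c_2$, $d_7$ meets $c_3$; and in your cascade on $t_{c_{2g+1}}\cdots t_{c_1}$, after producing $t_{d_{2g+1}}$ you cannot commute it past $t_{c_{2g-3}}$ to set up the next application of $(\star)$. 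Hence the asserted equivalence $t_{c_{2g+1}}\cdots t_{c_1}\sim t_{c_3}t_{c_2}t_{c_1}\cdot t_{d_4}\cdots t_{d_{2g+1}}$ is unsupported, and your induction on $g$ is only a factor count, not a construction of the required Hurwitz moves.

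The paper avoids this obstruction by organizing the computation differently. It first rewrites $(t_{c_1}\cdots t_{c_{2g+1}})^4$ as $(t_{c_1}t_{c_2}t_{c_3})^4$ times a product of $2g-2$ \emph{descending quadruples} $\prod_{i=1}^{2g-2} t_{c_{i+3}}t_{c_{i+2}}t_{c_{i+1}}t_{c_i}$ (this rearrangement uses only braid relations). It then inducts on the starting index $k$ of this product, showing
\[
\prod_{i=k}^{2g-2} t_{c_{i+3}}t_{c_{i+2}}t_{c_{i+1}}t_{c_i}\ \sim\ (t_{c_{k+2}}t_{c_{k+1}}t_{c_k})^{2g-1-k}\, t_{d_{k+3}}\cdots t_{d_{2g+1}}.
\]
In the inductive step the leftmost quadruple slides past the accumulated block of triples using \eqref{hurwitz1} (each pass shifts the triple's indices down by one), and then a single application of $(\star)$ converts that quadruple to $t_{c_{k+2}}t_{c_{k+1}}t_{c_k}\cdot t_{d_{k+3}}$. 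The crucial point is that $t_{d_{k+3}}$ is produced already sitting immediately to the left of $t_{d_{k+4}}\cdots t_{d_{2g+1}}$, so no commutation of any $t_{d_j}$ past any $t_{c_i}$ is ever required.
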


\begin{proof}
It is easy to check using the braid relators that
\begin{align}
&(t_{c_1} t_{c_2} \cdots t_{c_{2g+1}})^4 \sim (t_{c_1} t_{c_2} t_{c_3})^4 \cdot \prod_{i=1}^4 t_{c_{5-i}} t_{c_{6-i}} \cdots t_{c_{2g+2-i}}, \label{equation2} \\
&\prod_{i=4}^1 t_{c_i} t_{c_{i+1}} \cdots t_{c_{i+2g-3}} \sim \prod_{i=1}^{2g-2} t_{c_{i+3}} t_{c_{i+2}} t_{c_{i+1}} t_{c_i}. \label{equation3}
\end{align}
Here, for $k=1,2,\ldots,2g-2$, we claim the following holds in $\Gamma_g^2$:
\begin{align*}
&\prod_{i=k}^{2g-2} t_{c_{i+3}} t_{c_{i+2}} t_{c_{i+1}} t_{c_i} \sim (t_{c_{k+2}} t_{c_{k+1}} t_{c_k})^{2g-1-k} t_{d_{k+3}} t_{d_{k+4}} \cdots t_{d_{2g+1}}. 
\end{align*}
The proof of the claim will be by induction on $2g-1-k$. For $k=2g-2$, the conclusion holds, since we have the equivalence
\begin{align*}
t_{c_{2g+1}} \cdot t_{c_{2g}} t_{c_{2g-1}} t_{c_{2g-2}} \sim t_{c_{2g}} t_{c_{2g-1}} t_{c_{2g-2}} \cdot t_{d_{2g+1}}.
\end{align*}
Assume that the relation holds for $k+1<2g-2$. 
By (\ref{hurwitz1}), we have 
\begin{align*}
&\prod_{i=k}^{2g-2} t_{c_{i+3}} t_{c_{i+2}} t_{c_{i+1}} t_{c_i} = t_{c_{k+3}} t_{c_{k+2}} t_{c_{k+1}} t_{c_k} \cdot \prod_{i=k+1}^{2g-2} t_{c_{i+3}} t_{c_{i+2}} t_{c_{i+1}} t_{c_i} \\
&= t_{c_{k+3}} t_{c_{k+2}} t_{c_{k+1}} t_{c_k} \cdot (t_{c_{k+3}} t_{c_{k+2}} t_{c_{k+1}})^{2g-1-(k+1)} t_{d_{k+4}} t_{d_{k+5}} \cdots t_{d_{2g+1}} \\
&= (t_{c_{k+2}} t_{c_{k+1}} t_{c_k})^{2g-1-(k+1)} \cdot t_{c_{k+3}} t_{c_{k+2}} t_{c_{k+1}} t_{c_k} \cdot t_{d_{k+4}} t_{d_{k+5}} \cdots t_{d_{2g+1}} \\
&= (t_{c_{k+2}} t_{c_{k+1}} t_{c_k})^{2g-1-(k+1)} \cdot t_{c_{k+2}} t_{c_{k+1}} t_{c_k} \cdot t_{d_{k+3}} \cdot t_{d_{k+4}} t_{d_{k+5}} \cdots t_{d_{2g+1}},
\end{align*}
which completes the proof of the claim. 

The proof of the lemma then follows from  $k=1$ case of the claim, and the relations (\ref{equation2}) and (\ref{equation3}). 
\end{proof}

\begin{lemma}\label{lem1-2}
The following holds in $\Gamma_g^2$: 
\begin{align*}
(t_{c_1} t_{c_2} \cdots t_{c_{2g+1}})^{2g-2} \sim t_{e_{2g+1}} \cdots t_{e_5} t_{e_4} (t_{c_1} t_{c_2} t_{c_3})^{2g-2} (t_{c_5} t_{c_6} \cdots t_{c_{2g+1}})^{2g-2}.
\end{align*}
\end{lemma}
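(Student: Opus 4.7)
The proof will be a direct mirror of Lemma~\ref{lem1-1}, with two systematic changes: the curves $e_j$ take the place of $d_j$, and all Hurwitz moves are applied at the right-hand end of each cyclic factor $t_{c_1}\cdots t_{c_{2g+1}}$ rather than the left-hand end. The core algebraic input is the identity
\[ t_{c_k}t_{c_{k+1}}t_{c_{k+2}}\cdot t_{c_{k+3}}\ \sim\ t_{e_{k+3}}\cdot t_{c_k}t_{c_{k+1}}t_{c_{k+2}}, \]
which follows via three successive Hurwitz moves from the definition $e_{k+3}=t_{c_k}t_{c_{k+1}}t_{c_{k+2}}(c_{k+3})$; this plays the role of the identity $t_{c_{k+3}}\cdot t_{c_{k+2}}t_{c_{k+1}}t_{c_k}\sim t_{c_{k+2}}t_{c_{k+1}}t_{c_k}\cdot t_{d_{k+3}}$ used in the proof of Lemma~\ref{lem1-1}.

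First I would establish the mirror of equation (\ref{equation2}): by repeated applications of (\ref{hurwitz2}), sliding the tail $t_{c_5}\cdots t_{c_{2g+1}}$ of each cyclic factor rightward past subsequent factors with indices shifting up by one each time they pass a four-block, one obtains
\[ (t_{c_1}\cdots t_{c_{2g+1}})^{2g-2}\ \sim\ \prod_{i=1}^{2g-2}\bigl(t_{c_i}t_{c_{i+1}}t_{c_{i+2}}t_{c_{i+3}}\bigr)\cdot (t_{c_5}t_{c_6}\cdots t_{c_{2g+1}})^{2g-2}. \]
The book-keeping here is a straightforward mirror of the two Hurwitz equivalences used in Lemma~\ref{lem1-1} to produce its equation (\ref{equation2}).

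Next I would prove by reverse induction on $k$, running from $k=2g-2$ down to $k=1$, the intermediate claim
\[ \prod_{i=k}^{2g-2}\bigl(t_{c_i}t_{c_{i+1}}t_{c_{i+2}}t_{c_{i+3}}\bigr)\ \sim\ t_{e_{2g+1}}\cdots t_{e_{k+3}}\cdot (t_{c_k}t_{c_{k+1}}t_{c_{k+2}})^{2g-1-k}. \]
The base case $k=2g-2$ is just the key identity applied to $t_{c_{2g-2}}t_{c_{2g-1}}t_{c_{2g}}\cdot t_{c_{2g+1}}$. For the inductive step from $k+1$ to $k$, writing $\prod_{i=k}^{2g-2}(\cdots)=t_{c_k}t_{c_{k+1}}t_{c_{k+2}}t_{c_{k+3}}\cdot\prod_{i=k+1}^{2g-2}(\cdots)$ and applying the inductive hypothesis, one mirrors the braid-relation manipulation from Lemma~\ref{lem1-1}: the identity
\[ t_{c_k}t_{c_{k+1}}t_{c_{k+2}}t_{c_{k+3}}\cdot (t_{c_{k+1}}t_{c_{k+2}}t_{c_{k+3}})^{2g-2-k}\ \sim\ (t_{c_k}t_{c_{k+1}}t_{c_{k+2}})^{2g-2-k}\cdot t_{c_k}t_{c_{k+1}}t_{c_{k+2}}t_{c_{k+3}}, \]
which is the exact mirror of the braid identity verified in the proof of Lemma~\ref{lem1-1} and is provable by an iteration of braid relations among the $t_{c_i}$'s, brings the two $t_c$-blocks into position. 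Applying the key identity to the rightmost $t_{c_k}t_{c_{k+1}}t_{c_{k+2}}t_{c_{k+3}}$ then peels off a new $t_{e_{k+3}}$, which, together with the already-present $t_{e_{2g+1}}\cdots t_{e_{k+4}}$, assembles into the required descending $t_e$-block. Setting $k=1$ in the claim and combining with the first step proves the lemma.

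The main technical obstacle will be handling the positions of the $t_{e_j}$-block relative to the newly prepended $t_{c_k}t_{c_{k+1}}t_{c_{k+2}}t_{c_{k+3}}$ within the inductive step, and verifying the mirrored braid identity above. The first of these is resolved by noting that $e_j$ is supported in a neighborhood of $c_{j-3},\ldots,c_j$, so for $j\geq k+4$ it is disjoint from $c_k$ and $t_{c_k}$ commutes directly with each such $t_{e_j}$; the residual interactions between $t_{c_{k+1}}, t_{c_{k+2}}, t_{c_{k+3}}$ and $t_{e_{k+4}}$ are then handled by Hurwitz moves that exactly parallel those in the proof of Lemma~\ref{lem1-1}, preserving the form of the $t_{e_j}$'s throughout.
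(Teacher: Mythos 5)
Your overall strategy is indeed the paper's --- the same block decomposition, the same key identity $t_{c_k}t_{c_{k+1}}t_{c_{k+2}}\cdot t_{c_{k+3}}\sim t_{e_{k+3}}\cdot t_{c_k}t_{c_{k+1}}t_{c_{k+2}}$, and the same induction --- but you have reversed the order of the product of four-term blocks, and with that ordering the argument does not go through. The paper's intermediate claim is
\[
\prod_{i=2g-2}^{k} t_{c_i}t_{c_{i+1}}t_{c_{i+2}}t_{c_{i+3}}\ \sim\ t_{e_{2g+1}}\cdots t_{e_{k+3}}\,(t_{c_k}t_{c_{k+1}}t_{c_{k+2}})^{2g-1-k},
\]
with the index \emph{descending}, so the $i=k$ block sits at the \emph{right} end. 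The inductive step appends the new block on the right of $t_{e_{2g+1}}\cdots t_{e_{k+4}}(t_{c_{k+1}}t_{c_{k+2}}t_{c_{k+3}})^{2g-2-k}$, carries it leftward through the power $(t_{c_{k+1}}t_{c_{k+2}}t_{c_{k+3}})^{2g-2-k}$ via \eqref{hurwitz2} (turning that power into $(t_{c_k}t_{c_{k+1}}t_{c_{k+2}})^{2g-2-k}$), and the new $t_{e_{k+3}}$ then emerges exactly to the right of $t_{e_{k+4}}$; the block never has to cross any $t_{e_j}$. Since consecutive blocks $t_{c_i}\cdots t_{c_{i+3}}$ and $t_{c_{i+1}}\cdots t_{c_{i+4}}$ do not commute, your ascending product is a different mapping class from the descending one, so your first display and your intermediate claim are not mere rearrangements of the paper's --- the two sides are not even equal in $\Gamma_g^2$.

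Two of the specific steps you invoke to push the prepended block rightward are also false. First, $c_k$ is \emph{not} disjoint from $e_{k+4}$: in $H_1(\Sigma_g^2;\Z/2\Z)$ one has $e_{k+4}=c_{k+1}+c_{k+2}+c_{k+3}+c_{k+4}$, so the mod $2$ intersection number of $c_k$ with $e_{k+4}$ equals that of $c_k$ with $c_{k+1}$, which is $1$; similarly $c_{k+1},c_{k+2},c_{k+3}$ meet $e_{k+4},e_{k+5},e_{k+6}$, and Hurwitz-moving the block past these twists replaces each such $e_j$ by its image under the block, so the form of the $t_{e_j}$'s is not preserved. Second, the ``mirror'' identity $P\,Q^{n}\sim R^{n}\,P$ (with $P=t_{c_k}t_{c_{k+1}}t_{c_{k+2}}t_{c_{k+3}}$, $Q=t_{c_{k+1}}t_{c_{k+2}}t_{c_{k+3}}$, $R=t_{c_k}t_{c_{k+1}}t_{c_{k+2}}$) is not a consequence of the braid relations and is false in general: \eqref{hurwitz2} gives $P\,t_{c_l}\sim t_{c_{l+1}}\,P$ only for $l=k,k+1,k+2$, which proves $Q^{n}P\sim P\,R^{n}$ (block entering from the right, moving left) but not $P\,Q^{n}\sim R^{n}P$; already $P\,t_{c_{k+3}}\,P^{-1}=t_{P(c_{k+3})}=t_{e_{k+3}}\neq t_{c_{k+2}}$ obstructs moving $P$ rightward past a single copy of $Q$. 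The repair is simply to take the product of blocks in descending order of $i$, so that the induction appends each new block on the right --- which is exactly what the paper does.
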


\begin{proof}
Once again, it easily follows from the  braid relators that
\begin{align}
&(t_{c_1} t_{c_2} \cdots t_{c_{2g+1}})^{2g-2} \sim \left(\prod_{i=2g-2}^1 t_{c_i} t_{c_{i+1}} t_{c_{i+2}} t_{c_{i+3}} \right) (t_{c_5} t_{c_6} \cdots t_{c_{2g+1}})^{2g-2}. \label{equation5}
\end{align}
Here, in $\Gamma_g^2$, we claim that 
\begin{align*}
&\prod_{i=2g-2}^k t_{c_i} t_{c_{i+1}} t_{c_{i+2}} t_{c_{i+3}} = t_{e_{2g+1}} \cdots t_{e_{k+4}} t_{e_{k+3}} (t_{c_k} t_{c_{k+1}} t_{c_{k+2}})^{2g-1-k}, \label{equationb} 
\end{align*}
for $k=1,2,\ldots,2g-2$. Once again we  induct on $2g-1-k$. For $k=2g-2$ we have 
\begin{align*}
t_{c_{2g-2}} t_{c_{2g-1}} t_{c_{2g}} \cdot t_{c_{2g+1}} = t_{e_{2g+1}} \cdot t_{c_{2g-2}} t_{c_{2g-1}} t_{c_{2g}}. 
\end{align*}
Assume that the relation holds for $k+1<2g-2$. 
By (\ref{hurwitz2}), we have 
\begin{align*}
&\prod_{i=2g-2}^k t_{c_i} t_{c_{i+1}} t_{c_{i+2}} t_{c_{i+3}} = \left(\prod_{i=2g-2}^{k+1} t_{c_i} t_{c_{i+1}} t_{c_{i+2}} t_{c_{i+3}}\right) \cdot t_{c_k} t_{c_{k+1}} t_{c_{k+2}} t_{c_{k+3}} \\
& = t_{e_{2g+1}} \cdots t_{e_{k+5}} t_{e_{k+4}} (t_{c_{k+1}} t_{c_{k+2}} t_{c_{k+3}})^{2g-2-k} \cdot t_{c_k} t_{c_{k+1}} t_{c_{k+2}} t_{c_{k+3}} \\
& = t_{e_{2g+1}} \cdots t_{e_{k+5}} t_{e_{k+4}} \cdot t_{c_k} t_{c_{k+1}} t_{c_{k+2}} t_{c_{k+3}} \cdot (t_{c_k} t_{c_{k+1}} t_{c_{k+2}})^{2g-2-k} \\
& = t_{e_{2g+1}} \cdots t_{e_{k+5}} t_{e_{k+4}} \cdot t_{e_{k+3}} \cdot t_{c_k} t_{c_{k+1}} t_{c_{k+2}} \cdot (t_{c_k} t_{c_{k+1}} t_{c_{k+2}})^{2g-2-k}. 
\end{align*}
Hence, the claim is proved. 

The proof of the lemma then follows from  the $k=1$ case of the claim, and  the relation (\ref{equation5}).
\end{proof}

\begin{lemma}\label{lem1-3}
$(t_{c_3}t_{c_2}t_{c_1})^4 \sim (t_{c_1}t_{c_2}t_{c_3})^4$ \ in $\Gamma_g^2$. 
\end{lemma}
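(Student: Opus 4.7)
The plan is to construct an explicit sequence of Hurwitz moves converting $(t_{c_3} t_{c_2} t_{c_1})^4$ to $(t_{c_1} t_{c_2} t_{c_3})^4$ in $\Gamma_g^2$. The elementary moves available among the three Dehn-twist factors are built from the chain geometry of $c_1, c_2, c_3$: since $c_1 \cap c_3 = \emptyset$, the commutation $t_{c_1} t_{c_3} = t_{c_3} t_{c_1}$ gives a two-term Hurwitz swap of adjacent $t_{c_1}$ and $t_{c_3}$; since $|c_i \cap c_{i+1}| = 1$, the braid relation $t_{c_i} t_{c_{i+1}} t_{c_i} = t_{c_{i+1}} t_{c_i} t_{c_{i+1}}$ gives a three-term Hurwitz swap for $i = 1, 2$. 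Each such move preserves the full product and keeps all factors as twists along $c_1, c_2,$ or $c_3$.

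The underlying algebraic equality is provided by Lemma~\ref{T:chain relation} (with $h = 1$): both $(t_{c_1} t_{c_2} t_{c_3})^4$ and $(t_{c_3} t_{c_2} t_{c_1})^4$ equal the boundary multi-twist $t_{b_1} t_{b_2}$ of a regular neighborhood of $c_1 \cup c_2 \cup c_3$, so the two twelve-term factorizations express the same element of $\Gamma_g^2$; the content of the present lemma is to upgrade this equality to a Hurwitz equivalence. I would establish the equivalence by a bubble-sort on the index string $(3,2,1,3,2,1,3,2,1,3,2,1)$, alternating commutation swaps (interchanging an adjacent $c_1$--$c_3$ pair) with braid swaps (flipping a triple of adjacent indices $(i, i+1, i)$ to $(i+1, i, i+1)$) until the string $(1,2,3,1,2,3,1,2,3,1,2,3)$ is reached. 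Note that one cannot short-circuit the computation by first establishing $(t_{c_1} t_{c_2} t_{c_3})^2 \sim (t_{c_3} t_{c_2} t_{c_1})^2$, since these two shorter products do not agree even as elements of $\Gamma_g^2$ in general (only their squares, being central boundary multi-twists, coincide). One really has to work with all twelve letters simultaneously.

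The main obstacle is strictly combinatorial bookkeeping. A naive application of the elementary moves can deadlock on blocks of identical adjacent letters, so the order in which the moves are applied must be chosen carefully. One useful invariant to track is the sum of the positions of the $1$'s, which should strictly decrease at each step until all four $1$'s occupy the four leftmost-of-their-block positions; a symmetric analysis handles the $3$'s. Since the factorization at every intermediate stage remains a product of Dehn twists along the three chain curves $c_1, c_2, c_3$, verifying termination reduces to checking a finite combinatorial game on length-twelve words in three letters, with no further geometric input required.
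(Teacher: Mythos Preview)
Your proposal identifies exactly the right elementary Hurwitz moves---the commutation $t_{c_1}t_{c_3}\sim t_{c_3}t_{c_1}$ and the braid swaps $t_{c_i}t_{c_{i+1}}t_{c_i}\sim t_{c_{i+1}}t_{c_i}t_{c_{i+1}}$---and these are precisely the ingredients the paper uses. So the approach is the same as the paper's. The problem is that you have written a \emph{plan} rather than a proof: phrases like ``I would establish,'' ``should strictly decrease,'' and ``reduces to checking a finite combinatorial game'' defer the actual work rather than doing it. For a lemma whose entire content is a concrete Hurwitz equivalence, the proof has to exhibit a sequence of moves (or a clear reason why one exists); saying the search space is finite is not enough.

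Your proposed termination invariant is also broken as stated. A braid move $(i,i{+}1,i)\leftrightarrow(i{+}1,i,i{+}1)$ changes the \emph{number} of occurrences of each letter, so ``the sum of the positions of the $1$'s'' is not preserved in any meaningful sense and cannot be used to prove monotone progress. Likewise, the bubble-sort metaphor is misleading: you cannot transpose an adjacent $(1,2)$ or $(2,3)$ pair directly, only via a three-letter braid move, so a sorting argument does not go through without a genuinely new idea.

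The paper's proof is simply the explicit computation you are postponing, and it is short. First commute each interior $c_1$ past the $c_3$ to its left, obtaining
\[
(t_{c_3}t_{c_2}t_{c_1})^4 \sim t_{c_3}t_{c_2}t_{c_3}\cdot t_{c_1}t_{c_2}t_{c_3}\cdot t_{c_1}t_{c_2}t_{c_3}\cdot t_{c_1}t_{c_2}t_{c_1}.
\]
Then use the ``sliding'' consequence of the braid relation, $t_{c_1}t_{c_2}t_{c_3}\cdot t_{c_i}\sim t_{c_{i+1}}\cdot t_{c_1}t_{c_2}t_{c_3}$ for $i=1,2$, to push the outer triples inward, together with two applications of $t_{c_i}t_{c_{i+1}}t_{c_i}\sim t_{c_{i+1}}t_{c_i}t_{c_{i+1}}$, to reach $(t_{c_1}t_{c_2}t_{c_3})^4$. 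Writing these four or five lines out \emph{is} the proof; you should do so rather than appeal to an unexecuted algorithm.
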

\begin{proof}
Since $t_{c_1}t_{c_3}\sim t_{c_3}t_{c_1}$, we have 
\begin{align*}
t_{c_3}t_{c_2}t_{c_1}t_{c_3}t_{c_2}t_{c_1}t_{c_3}t_{c_2}t_{c_1}t_{c_3}t_{c_2}t_{c_1} &\sim t_{c_3}t_{c_2}t_{c_3}t_{c_1}t_{c_2}t_{c_3}t_{c_1}t_{c_2}t_{c_3}t_{c_1}t_{c_2}t_{c_1}. 
\end{align*}
Here, we have the following relations, again by the braid relators:
\begin{align*}
t_{c_1}t_{c_2}t_{c_3} \cdot t_{c_i} &\sim t_{c_{i+1}} \cdot t_{c_1}t_{c_2}t_{c_3}, \\
t_{c_i}t_{c_{i+1}}t_{c_i} &\sim t_{c_{i+1}}t_{c_i}t_{c_{i+1}} \ \text{for $i=1,2$}. 
\end{align*}
Using these relations, we have \begin{align*}
(t_{c_3}t_{c_2}t_{c_3}) t_{c_1}t_{c_2}t_{c_3}t_{c_1}t_{c_2}t_{c_3} (t_{c_1}t_{c_2}t_{c_1}) &\sim t_{c_1}t_{c_2}t_{c_3} (t_{c_2}t_{c_1}t_{c_2})( t_{c_2}t_{c_3}t_{c_2}) t_{c_1}t_{c_2}t_{c_3} \\
&\sim t_{c_1}t_{c_2}t_{c_3} (t_{c_1}t_{c_2}t_{c_1})(t_{c_3}t_{c_2}t_{c_3}) t_{c_1}t_{c_2}t_{c_3} \\
&\sim t_{c_1}t_{c_2}t_{c_3} (t_{c_1}t_{c_2}t_{c_3} t_{c_1}t_{c_2}t_{c_3}) t_{c_1}t_{c_2}t_{c_3}. 
\end{align*}
Therefore, we obtain $(t_{c_3}t_{c_2}t_{c_1})^4 \sim (t_{c_1}t_{c_2}t_{c_3})^4$. 
\end{proof}

We can now give our proof of the main result of this subsection: 
\begin{proof}[Proof of Proposition~\ref{lem1}]
Applying Lemmas~\ref{lem1-1} and~\ref{lem1-2} to the odd chain relation (Lemma~\ref{T:chain relation}) we get:  
\begin{align*}
t_{\delta_1}t_{\delta_1^\prime} &= (t_{c_1} t_{c_2} \cdots t_{c_{2g+1}})^{2g+2} \\
&= (t_{c_1} t_{c_2} t_{c_3})^4 (t_{c_3} t_{c_2} t_{c_1})^{2g-2} D_g E_g (t_{c_1} t_{c_2} t_{c_3})^{2g-2} (t_{c_5} t_{c_6} \cdots t_{c_{2g+1}})^{2g-2}. 
\end{align*}
Here, note that $2g-2=4k$ if $g=2k+1$ and $2g-2=4k+2$ if $g=2k+2$. 
Since $c_1,c_2,c_3$ are disjoint from $c_5,c_6,\ldots,c_{2g+1},\delta_1,\delta_1^\prime$, by Hurwitz moves (including cyclic permutation) and Lemma~\ref{lem1-3}, we obtain the claim. 
\end{proof}

\medskip

\subsection{Families of new symplectic pencils via unchaining}\label{breaking} \

The new positive factorizations for Lefschetz pencils on $Z'_g$ we obtained in Proposition~\ref{lem1} allows us to apply unchaining as a monodromy substitution, in fact, multiple times.  In this way, we will derive new positive factorizations of the boundary multi-twist in $\Gamma_g^2$, which can be further lifted to $\Gamma_g^m$ for varying $m>2$ as we move forward. The main outcomes of this subsection will be our construction of family of symplectic fibrations $(X_g(i), f_g(i))$ and pencils $(X'_g(i), f'_g(i))$ in Theorem~\ref{thm:3}.

\begin{lemma}\label{lem2}
For $g\geq 3$, the following relations hold in $\Gamma_g^2$:
\begin{equation}
t_{\delta_1} t_{\delta_1^\prime} = \begin{cases}
t_{b} t_a^i t_{b^\prime} t_{a^\prime}^i (t_{c_1} t_{c_2} t_{c_3})^{4(g-i)} D_g E_g & (g:\mbox{odd}) \\
t_{b} t_a^i t_{b^\prime} t_{a^\prime}^i (t_{c_1} t_{c_2} t_{c_3})^{4(g-1-i)+2} (t_{c_3} t_{c_2} t_{c_1})^2 D_g E_g & (g:\mbox{even}), 
\end{cases}\label{Eq:modified chain}
\end{equation}
where $0\leq i\leq g$ if $g$ is odd and $0\leq i \leq g-1$ if $g$ is even. 
\end{lemma}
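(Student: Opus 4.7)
The plan is to start from the factorizations of Proposition~\ref{lem1} and apply the odd chain relation (Lemma~\ref{T:chain relation}) twice: once to the tail subchain $c_5, c_6, \ldots, c_{2g+1}$ (of odd length $2g-3$), and once to the head subchain $c_1, c_2, c_3$. A regular neighborhood of $c_5, \ldots, c_{2g+1}$ in $\Sigma_g^2$ is a subsurface of genus $g-2$ with two boundary components, which I identify with the curves $b$ and $b'$ of Figure~\ref{F:curves1}; a regular neighborhood of $c_1 \cup c_2 \cup c_3$ is a one-holed torus-with-an-extra-hole whose two boundary components are the curves $a$ and $a'$. From Lemma~\ref{T:chain relation} one then has
\[
(t_{c_5} t_{c_6} \cdots t_{c_{2g+1}})^{2g-2} = t_b t_{b'} \quad \text{and} \quad (t_{c_1} t_{c_2} t_{c_3})^4 = t_a t_{a'}.
\]

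First I would apply the second identity $i$ times to the front block of the factorization from Proposition~\ref{lem1}, splitting $(t_{c_1} t_{c_2} t_{c_3})^{4g} = (t_a t_{a'})^i \, (t_{c_1} t_{c_2} t_{c_3})^{4(g-i)}$ in the odd case (with the obvious analogue in the even case, leaving the $(t_{c_3} t_{c_2} t_{c_1})^2$ block intact and forcing $i \leq g-1$). Then I would use the first identity to replace the tail $(t_{c_5} \cdots t_{c_{2g+1}})^{2g-2}$ by $t_b t_{b'}$. In the odd case the resulting expression is
\[
t_{\delta_1} t_{\delta_1'} = (t_a t_{a'})^i \, (t_{c_1} t_{c_2} t_{c_3})^{4(g-i)} \, D_g E_g \, t_b t_{b'},
\]
and the even case is analogous.

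What remains is a rearrangement. The key geometric input, visible from Figures~\ref{F:curves1} and \ref{vanishingcycles}, is that the four curves $a, a', b, b'$ are pairwise disjoint, and each of them is disjoint from $c_1, c_2, c_3$ as well as from every vanishing cycle $d_j, e_j$ in $D_g E_g$. (For $a, a'$ this is immediate, since they co-bound a neighborhood of $c_1 \cup c_2 \cup c_3$ on the side away from $c_4, \ldots, c_{2g+1}$; for $b, b'$ it requires tracing the definitions $d_j = t_{c_{j-3}}^{-1} t_{c_{j-2}}^{-1} t_{c_{j-1}}^{-1}(c_j)$ and $e_j = t_{c_{j-3}} t_{c_{j-2}} t_{c_{j-1}}(c_j)$ through Figure~\ref{vanishingcycles} and verifying that the resulting curves stay clear of the two separating curves $b, b'$.) Given these disjointnesses, each of $t_a^i, t_{a'}^i, t_b, t_{b'}$ commutes with $(t_{c_1} t_{c_2} t_{c_3})^{4(g-i)} D_g E_g$ and with one another, so
\[
(t_a t_{a'})^i \, t_b t_{b'} \;=\; t_a^i t_{a'}^i t_b t_{b'} \;=\; t_b t_a^i t_{b'} t_{a'}^i,
\]
and pulling this block to the front of the factorization yields the stated form. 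The even case follows by the same manipulations.

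The main obstacle I expect is the geometric disjointness check for $b, b'$ against the transported curves $d_j, e_j$: these are not visibly chain curves but are defined via Dehn-twist actions, so one must argue directly (or via a careful picture in $\Sigma_g^2$) that the separating curves $b, b'$ can be realized disjointly from every $d_j, e_j$. Once this is in place, the remainder of the argument is a purely formal combination of two chain relations and commutations among disjoint Dehn twists.
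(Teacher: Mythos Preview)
Your overall strategy---apply the two odd chain relations to the factorization of Proposition~\ref{lem1} and then rearrange---is exactly the paper's. The gap is in the rearrangement step.

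You assert that each of $a,a',b,b'$ is disjoint from every $d_j,e_j$, and use this to commute $t_bt_{b'}$ through $D_gE_g$. This disjointness fails already for $d_4$. The curve $c_4$ meets $c_3$ (which lies inside the subsurface bounded by $a\cup a'$) once and $c_5$ (which lies inside the subsurface bounded by $b\cup b'$) once, so $c_4$ must cross each of $a,a',b,b'$; in the hyperelliptic model one checks $i(c_4,b)=1$. Since $d_4=t_{c_1}^{-1}t_{c_2}^{-1}t_{c_3}^{-1}(c_4)$ and the twists $t_{c_1},t_{c_2},t_{c_3}$ are supported away from $b$, we get $i(d_4,b)=i(c_4,b)=1$. (The same obstruction occurs for $d_5,d_6,d_7$ and the corresponding $e_j$, and---contrary to your parenthetical---also for $a,a'$ in place of $b,b'$.) So $t_bt_{b'}$ does not commute with the individual factors of $D_gE_g$, and the ``disjointness check'' you flag as the main obstacle genuinely fails rather than merely being tedious.

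The paper avoids this entirely with a one-line device: after the two chain substitutions one has
\[
t_{\delta_1}t_{\delta_1'}=t_a^it_{a'}^i\,(t_{c_1}t_{c_2}t_{c_3})^{\cdots}\,D_gE_g\;t_bt_{b'},
\]
and since $t_{\delta_1}t_{\delta_1'}$ is \emph{central} in $\Gamma_g^2$, conjugating both sides by $(t_bt_{b'})^{-1}$ leaves the left side unchanged and moves $t_bt_{b'}$ to the front on the right. After that, only the (true) pairwise disjointness of $a,a',b,b'$ is needed to rewrite $t_bt_{b'}\,t_a^it_{a'}^i$ as $t_bt_a^it_{b'}t_{a'}^i$. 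No disjointness from the $d_j,e_j$ is ever required.
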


\begin{proof}
Take the following chain relators:
\begin{align*}
A_3 &:= (t_{c_1} t_{c_2} t_{c_3})^4 t_{a^\prime}^{-1} t_a^{-1}, \\
A_{2g-3} &:= (t_{c_5} t_{c_6} \cdots t_{c_{2g+1}})^{2g-2} t_{b^\prime}^{-1} t_b^{-1}.
\end{align*}
Suppose $g$ is even. By applying $A_3^{-1}$ substitutions $i$ times and $A_{2g-3}^{-1}$ substitution once to $(t_{c_1} t_{c_2} t_{c_3})^{4(g-1)}$ and $(t_{c_5} t_{c_6} \cdots t_{c_{2g+1}})^{2g-2}$  on the right-hand side of our relation in Proposition~\ref{lem1}, respectively, we obtain
\begin{align*}
t_{\delta_1} t_{\delta_1^\prime} = &  t_a^i t_{a^\prime}^i (t_{c_1} t_{c_2} t_{c_3})^{4(g-1-i)+2} (t_{c_3} t_{c_2} t_{c_1})^2 D_g E_g t_b t_{b^\prime}. 
\end{align*}
Since $a,a^\prime,b,b^\prime$ are disjoint from each other, the elements $t_{a}, t_{a'},  t_{b}$ and $t_{b'}$ all commute.  Moreover, $t_{\delta_1}t_{\delta_1^\prime}$ is central  in $\Gamma_g^2$, so we can get the claimed relation after conjugating both sides with the inverse of  $t_bt_{b^\prime}$. 

The proof for the case of odd $g$ is very similar. 
\end{proof}

We will now show that there is a lift of the relations in Lemma~\ref{lem2} from $\Gamma_g^2$ to $\Gamma_g^{2(i+1)}$ as a factorization of the boundary multi-twist. These lifts play a crucial role in our calculation of the symplectic Kodaira dimension of the underlying manifolds.

\begin{theorem}\label{thm:3}
There are symplectic genus-$g$ Lefschetz pencils $(X'_g(i), f'_g(i))$, for each $0\leq i \leq g$ if $g$ is odd, and for $0\leq i \leq g-1$ if $g$ is even, with monodromy factorizations in $\Gamma_g^{2(i+1)}$ as follows:
\begin{align*}
& t_{\delta_{i+1}} \cdots t_{\delta_2} t_{\delta_1} t_{\delta_{i+1}^\prime} \cdots t_{\delta_2^\prime} t_{\delta_1^\prime}  \\
=& \begin{cases}
t_{x_{i+1}} \cdots t_{x_2} t_{x_1} t_{x_{i+1}^\prime} \cdots t_{x_2^\prime} t_{x_1^\prime} (t_{c_1} t_{c_2} t_{c_3})^{4(g-i)} D_g E_g & (g:\mbox{odd}) \\
t_{x_{i+1}} \cdots t_{x_2} t_{x_1} t_{x_{i+1}^\prime} \cdots t_{x_2^\prime} t_{x_1^\prime} (t_{c_1} t_{c_2} t_{c_3})^{4(g-1-i)+2} (t_{c_3} t_{c_2} t_{c_1})^2 D_g E_g & (g:\mbox{even})
\end{cases}
\end{align*}
where the curves $x_k,x_k^\prime$ (for $k=1,2,\ldots, n$) on the surface $\Sigma_g^{2n}$, $n=i+1$, are as in  \mbox{Figure\ref{sectioncurves}.}
Blowing up all the base points of each $(X'_g(i), f'_g(i))$ yields symplectic Lefschetz fibrations $(X_g(i), f_g(i))$, whose monodromy factorizations in $\Gamma_g$ are obtained from the above after capping off all the boundary components.
\end{theorem}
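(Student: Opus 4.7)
The plan has two parts: establishing the claimed positive factorization of the boundary multi-twist $t_{\delta_{i+1}}\cdots t_{\delta_1}\,t_{\delta_{i+1}'}\cdots t_{\delta_1'}$ in $\Gamma_g^{2(i+1)}$, and then invoking the standard correspondence reviewed in Section~\ref{Sec:preliminaries} between positive factorizations of boundary multi-twists and symplectic Lefschetz pencils to produce $(X_g'(i), f_g'(i))$, followed by blowing up all the base points to obtain $(X_g(i), f_g(i))$, whose monodromy factorization in $\Gamma_g$ is read off from the pencil's by capping off all boundary components of the reference fiber.

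My approach to establishing the factorization is to exploit the capping homomorphism $C \colon \Gamma_g^{2(i+1)} \to \Gamma_g^2$ obtained by capping off the boundary components $\delta_k, \delta_k'$ for $k = 2,\ldots, i+1$ with disks. The curves $x_k, x_k'$ in Figure~\ref{sectioncurves} should be arranged so that $C(t_{x_k}) = t_a$ and $C(t_{x_k'}) = t_{a'}$ for $k = 1,\ldots, i$, while $C(t_{x_{i+1}}) = t_b$ and $C(t_{x_{i+1}'}) = t_{b'}$. Under this arrangement, the right-hand side of the proposed equation in $\Gamma_g^{2(i+1)}$ projects under $C$ to the right-hand side of the equation in Lemma~\ref{lem2}, and the left-hand side projects to $t_{\delta_1}t_{\delta_1'}$. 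Hence the proposed equation holds modulo $\ker(C)$, which is generated by the central boundary twists $t_{\delta_k}, t_{\delta_k'}$ ($k \geq 2$), so the two sides differ by a central word in these generators with some integer exponents.

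To upgrade this modular equality to an exact equality, I would proceed by induction on $i$, with the base case $i=0$ being exactly Lemma~\ref{lem2}. For the inductive step from $i$ to $i+1$, I would introduce a new pair of boundary components $\delta_{i+1}, \delta_{i+1}'$ inside appropriate regions bounded by the curves that project to $a$ and $a'$, and apply a local mapping-class-group substitution, supported in a neighborhood of these new boundaries, that replaces a single occurrence of $t_a$ in the existing factorization by the lifted product $t_{x_{i+1}}\, t_{x_i^{\text{new}}}$, introducing exactly one factor of $t_{\delta_{i+1}}$ onto the left-hand side (and similarly on the primed side). The precise local identity, dictated by the configuration of $x_i, x_{i+1}, \delta_{i+1}$ in Figure~\ref{sectioncurves}, should be a lantern-like or bounding-pair-like relation on a pair of pants or once-punctured annulus neighborhood of the new boundary.

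The main obstacle I anticipate is precisely this bookkeeping in the inductive step: ensuring that exactly one factor of $t_{\delta_{i+1}}$ (and one of $t_{\delta_{i+1}'}$) is produced at each stage, so that after $i$ iterations the full boundary multi-twist $t_{\delta_{i+1}}\cdots t_{\delta_2}\, t_{\delta_{i+1}'}\cdots t_{\delta_2'}$ appears on the left without any stray boundary twist contributions. Once the correct local relation is identified from Figure~\ref{sectioncurves}, the remaining verification reduces to a direct computation in the mapping class group of the local subsurface, and the second part of the proof is then an immediate application of the correspondence in Section~\ref{Sec:preliminaries}.
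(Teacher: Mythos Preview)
Your inductive strategy is the paper's: start from Lemma~\ref{lem2} and, step by step, trade factors involving $t_a$ (and $t_{a'}$) for lifted Dehn twists while producing exactly one new pair of boundary twists. The local identity you left unspecified is a genuine lantern relation on a four-holed sphere, not a relation on a pair of pants or punctured annulus: with the auxiliary curves $y_k,z_k$ of Figure~\ref{sectioncurves} (satisfying $y_{k}=x_{k+1}$ and $z_{j}=\delta_{j}$ for $j\ge 2$), the curves $\delta_k,\, a,\, y_{k-2},\, \delta_{k-1}$ cobound a $\Sigma_0^4$ inside $\Sigma_g^{2n}$, and the lantern there reads
\[
t_{\delta_k}\, t_a\, t_{y_{k-2}}\, t_{\delta_{k-1}} \;=\; t_{z_{k-1}}\, t_{y_{k-1}}\, t_{x_{k-1}}
\]
(and symmetrically on the primed side). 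The paper's inductive step multiplies both sides of the hypothesis by the central product $t_{\delta_k}t_{\delta_{k-1}}t_{\delta_k'}t_{\delta_{k-1}'}$, applies these two lanterns, and then cancels the common factors $t_{z_{k-1}},t_{z_{k-1}'}$ appearing on both sides; no separate capping-homomorphism bookkeeping is needed once this is in hand. Note also that the mechanism is not ``replace one $t_a$ by two lifted twists'': each lantern consumes one $t_a$ together with the previously produced $t_{y_{k-2}}$ and two central boundary twists, and outputs three interior twists.

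One small correction: under your capping $C\colon \Gamma_g^{2(i+1)}\to\Gamma_g^2$, it is $x_1$ that equals $b$ (the paper records $x_1=b$, $x_1'=b'$ in the base case), while the $x_k$ with $k\ge 2$ project to $a$; your assignment is reversed.
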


\begin{figure}[hbt]
 \centering
     \includegraphics[width=12cm]{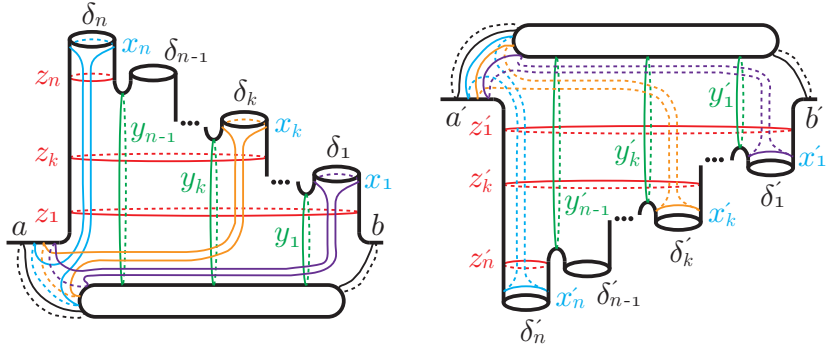}
     \caption{The curves $x_k,x_k^\prime,z_k,z_k^\prime$ and $y_k,y_k^\prime$, which are contained in pairs of pants bounded by $a, b, \delta_1$ and $a', b', \delta'_1$ on $\Sigma_g^2$ in Figure\ref{F:curves1}, respectively.}
     \label{sectioncurves}
\end{figure}

\begin{proof} 

Let $y_k,y^\prime_k$ (for $k=1,2,\ldots, n$) and $z_k,z^\prime_k$ (for $k=1,2,\ldots,n-1$) be the auxiliary curves in Figure~\ref{sectioncurves}, where $n=i+1$. Note that $y_{k}=x_{k+1}, y_{k}^\prime=x^\prime_{k+1}$ and $z_{k+1}=\delta_{k+1}, z_{k+1}^\prime=\delta_{k+1}^\prime$. 

We claim that whenever we have  a relation  in $\Gamma_g^{2k}$ of the form
\begin{align*}
t_{z_1} t_{z^\prime_1} = t_b t_a^{k-1} t_{b^\prime} t_{a^\prime}^{k-1} \cdot T,
\end{align*}
where $T$ is an element in $\Gamma_g^{2k}$, then the following relation holds in $\Gamma_g^{2k}$ as well:
\begin{align*}
t_{\delta_k} \cdots t_{\delta_2} t_{\delta_1} \cdot t_{\delta_k^\prime} \cdots t_{\delta_2^\prime} t_{\delta_1^\prime} = t_{x_k} \cdots t_{x_2} t_{x_1} \cdot t_{x_k^\prime} \cdots t_{x_2^\prime} t_{x_1^\prime} \cdot T. 
\end{align*}

We will prove the claim by induction on $k$. For $k=1$, note that $x_1=b$, $x_1^\prime=b^\prime$ and $\delta_1=z_1$, $\delta_1^\prime=z_1^\prime$. By Lemma~\ref{lem2}, this relation holds for $k=1$. Assume that the relation holds for each $1<k-1$. By the assumption, since $a,a^\prime,b,b^\prime$ are disjoint from each other, the following relation holds in $\Gamma_g^{2k}$:
\begin{align*}
t_{z_1} t_{z^\prime_1} = t_b t_a^{k-1} t_{b^\prime} t_{a^\prime}^{k-1} = t_a t_{a^\prime} \cdot t_b t_a^{k-2} t_{b^\prime} t_{a^\prime}^{k-2} \cdot T. 
\end{align*}
Therefore, since $a,a^\prime$ are disjoint from $x_i,x_i^\prime,y_{k-2},y_{k-2}^\prime$, we have the following relation in $\Gamma_g^{2k}$ by the induction hypothesis:
\begin{align*}
t_{z_{k-1}} t_{\delta_{k-2}} \cdots t_{\delta_1} \cdot t_{z^\prime_{k-1}} t_{\delta^\prime_{k-2}} \cdots t_{\delta^\prime_1} 
&= t_a t_{a^\prime} \cdot t_{y_{k-2}} t_{x_{k-2}} \cdots t_{x_1} \cdot t_{y^\prime_{k-2}} t_{x^\prime_{k-2}} \cdots t_{x^\prime_1} \cdot T \\
&= t_a t_{y_{k-2}} t_{x_{k-2}} \cdots t_{x_1} \cdot t_{a^\prime} t_{y^\prime_{k-2}} t_{x^\prime_{k-2}} \cdots t_{x^\prime_1} \cdot T. 
\end{align*}
Then, we multiply both sides by $t_{\delta_k} t_{\delta_{k-1}} t_{\delta_k^\prime} t_{\delta_{k-1}^\prime}$. Since $t_{\delta_{k-1}}$, $t_{\delta_k}$, $t_{\delta_{k-1}^\prime}$ and $t_{\delta_k^\prime}$ are central elements in $\Gamma_g^{2k}$, we obtain:
\begin{align*}
&t_{z_{k-1}} \cdot t_{\delta_k} t_{\delta_{k-1}} \cdot t_{\delta_{k-2}} \cdots t_{\delta_1} \cdot  t_{z_{k-1}^\prime} \cdot t_{\delta_k^\prime} t_{\delta_{k-1}^\prime} \cdot t_{\delta_{k-2}^\prime} \cdots t_{\delta_1^\prime} \\
&=
t_{\delta_k} t_a t_{y_{k-2}} t_{\delta_{k-1}} \cdot t_{x_{k-2}} \cdots t_{x_1} \cdot t_{\delta_k^\prime} t_{a^\prime} t_{y^\prime_{k-2}} t_{\delta_{k-1}^\prime} \cdot t_{x^\prime_{k-2}} \cdots t_{x^\prime_1} \cdot T. 
\end{align*}
By the lantern relations $t_{\delta_k} t_a t_{y_{k-2}} t_a t_{\delta_{k-1}} = t_{z_{k-1}} t_{y_{k-1}} t_{x_{k-1}}$ and $t_{\delta_k^\prime} t_{a^\prime} t_{y^\prime_{k-2}} t_{\delta_{k-1}^\prime} = t_{z^\prime_{k-1}} t_{y^\prime_{k-1}} t_{x^\prime_{k-1}}$, we have 
\begin{align*}
&t_{z_{k-1}} t_{\delta_k} t_{\delta_{k-1}} \cdot t_{\delta_{k-2}} \cdots t_{\delta_1}  \cdot t_{z^\prime_{k-1}} t_{\delta_k^\prime} t_{\delta_{k-1}^\prime} \cdot t_{\delta_{k-2}^\prime} \cdots t_{\delta_1^\prime} \\
&=  t_{z_{k-1}} t_{y_{k-1}}  t_{x_{k-1}} t_{x_{k-2}} \cdots t_{x_1} \cdot  t_{z^\prime_{k-1}} t_{y^\prime_{k-1}} t_{x_{k-1}^\prime} t_{x_{k-2}^\prime} \cdots t_{x^\prime_1} \cdot T. 
\end{align*}
Because $z_{k-1}^\prime$ are disjoint from $x_i$, $y_i$, we can remove $t_{z_{k-1}}$ and $t_{z^\prime_{k-1}}$ from both sides of this relation. 
Since $y_{k-1}=x_k$, $y_{k-1}^\prime=x^\prime_k$, this concludes the proof of the claim.

The proof of the theorem now follows from applying the above lifting argument to the positive factorization given in Lemma~\ref{lem2}. Corresponding to it is the promised family of symplectic Lefschetz pencils $(X'_g(i), f'_g(i))$. Capping all the boundary components induces a well-known homomorphism $\Gamma_g^{2(i+1)} \to \Gamma_g$, under which the monodromy factorization of this pencil maps to a monodromy factorization of a symplectic Lefschetz fibration $(X_g(i), f_g(i))$, which clearly realizes the blow-up of the pencil at all its base points. 
\end{proof}

\medskip
\subsection{Topology of the symplectic $4$--manifolds $X_g(i)$ and $X_g'(i)$} \

Here we will investigate the topology of the symplectic manifolds  $X'_g(i)$ and therefore that of $X_g(i) \cong X_g(i) \, \# \, 2(i+1) \CPb$, which are the total spaces of the new  pencils and fibrations we produced above.  We will record all the results for $X'_g(i)$, which then translate to those of $X_g(i)$ through blow-ups.

\begin{lemma}\label{topXg}
For $0\leq i \leq g$ when $g$ is odd, and $0\leq i \leq g-1$ when $g$ is even,
\begin{align*}
&e(X_g'(i))=12(g-i)  \hspace{.7em}\mbox{and}\hspace{.7em} \sigma(X_g'(i))=-8(g-i).
\end{align*}
When $i\leq g-1$, $X_g(i)$ is simply-connected, whereas for $i=g$ (when $g$ is odd), we have $\pi_1(X_g(g))\cong \Z\oplus \Z$.
\end{lemma}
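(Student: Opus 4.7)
The plan is to compute $e$ and $\sigma$ by tracking these invariants through the sequence of monodromy substitutions that produces $(X'_g(i), f'_g(i))$ from the holomorphic pencil on $Z'_g$ of Subsection~\ref{hurwitz}, and to compute $\pi_1$ directly from the Lefschetz fibration presentation by vanishing cycles.

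For the Euler characteristic, I would simply count from Theorem~\ref{thm:3}: the right-hand side of the monodromy factorization contains $\ell = 16g - 10i - 2$ Dehn twists in both parities of $g$, while the left-hand side records $n = 2(i+1)$ base points, so $e(X'_g(i)) = 4 - 4g + \ell - n = 12(g-i)$. For the signature, I would track each substitution starting from $\sigma(Z'_g) = -2(g+1)^2 + 2$, as recorded in Subsection~\ref{hurwitz}. By Proposition~\ref{T:topinv_unchaining1}, the single $A_{2g-3}^{-1}$ substitution from Lemma~\ref{lem2} is a $(g-2)$-unchaining contributing $+2(g-2)g$, and each of the subsequent $i$ applications of $A_3^{-1}$ is a $1$-unchaining contributing $+6$, so the intermediate pencil $W_i$ of Lemma~\ref{lem2} satisfies $\sigma(W_i) = -8g + 6i$. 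Finally, the $i$ lifting steps in the proof of Theorem~\ref{thm:3} employ two lantern substitutions each, and by the Endo--Gurtas \cite{EndoGurtas} interpretation of the lantern relation these realize rational blowdowns of $(-4)$-spheres, contributing $+1$ to $\sigma$ each; summing, one gets $\sigma(X'_g(i)) = -8g + 6i + 2i = -8(g-i)$.

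For the fundamental group, blow-ups do not affect $\pi_1$, so $\pi_1(X_g(i)) \cong \pi_1(X'_g(i))$, and by the standard Lefschetz presentation this equals $\pi_1(\Sigma_g)/N$, where $N$ is the normal subgroup generated by the vanishing cycles of $f_g(i)$ (those read off from Theorem~\ref{thm:3} after capping all boundary components). When $i \leq g - 1$, the factor $(t_{c_1}t_{c_2}t_{c_3})^{4(g-i)}$, or its even-$g$ analogue, still contributes, so $c_1, c_2, c_3$ lie among the vanishing cycles, alongside $d_j, e_j$ for $j = 4, \ldots, 2g+1$ and the $x_j, x'_j$. The key observation is that for any loop $\gamma$ and any simple closed curve $c$, the class of $t_c(\gamma)$ in $\pi_1(\Sigma_g)/N(c)$ equals that of $\gamma$, since $t_c(\gamma)$ is obtained from $\gamma$ by inserting powers of $c$. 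Applying this to the defining formulas $d_j = t_{c_{j-3}}^{-1} t_{c_{j-2}}^{-1} t_{c_{j-1}}^{-1}(c_j)$ and $e_j = t_{c_{j-3}} t_{c_{j-2}} t_{c_{j-1}}(c_j)$, a straightforward induction on $j$ yields $c_j \in N(c_1, c_2, c_3, d_4, \ldots, d_j)$. Hence $N$ contains $c_1, \ldots, c_{2g+1}$, whose normal closure is already all of $\pi_1(\Sigma_g)$ because the original pencil $Z'_g$ is simply connected with these as its vanishing cycles. This gives $\pi_1(X_g(i)) = 1$.

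The main obstacle is the remaining case $g$ odd, $i = g$, where the exponent $4(g-i)$ vanishes, no $c_j$ survives among the vanishing cycles, and the inductive killing argument is unavailable. Here I would express each of the remaining vanishing cycles $x_j, x'_j, d_j, e_j$ explicitly as a word in the standard generators $a_1, b_1, \ldots, a_g, b_g$ of $\pi_1(\Sigma_g)$, using Figures~\ref{F:curves1}, \ref{vanishingcycles}, and \ref{sectioncurves}, and then verify by direct manipulation that the quotient collapses to $\Z \oplus \Z$. A cleaner route is to first compute $H_1(X_g(g); \Z)$, which from the homology classes of the vanishing cycles is visibly free abelian of rank two, and then separately check that the two surviving generators commute in $\pi_1(X_g(g))$, exploiting an additional relation coming from the $x_j, x'_j$ vanishing cycles supported in the pair-of-pants neighbourhoods of $a, b, \delta_1$ and $a', b', \delta'_1$.
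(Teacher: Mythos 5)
Your overall strategy is the paper's: feed the chain substitutions into Proposition~\ref{T:topinv_unchaining1} to get $e$ and $\sigma$, and read $\pi_1$ off the vanishing cycles. Two differences are worth noting. First, for the signature you route the computation through the two--base-point pencil of Lemma~\ref{lem2} and then account for the passage to the $2(i+1)$--base-point pencil of Theorem~\ref{thm:3} by treating the $2i$ lantern relations in the lifting induction as rational blowdowns of $(-4)$--spheres. The arithmetic ($+2i$ to $\sigma$, $-2i$ to $e$) is correct, but the justification is loose: the intermediate relations in that induction are not boundary multi-twist factorizations, so they do not define pencils and the individual ``steps'' are not surgeries. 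The clean statement is simply that the two pencils have the same blow-up $X_g(i)$ (their capped-off factorizations in $\Gamma_g$ agree), so one only compares base-point counts; this is what the paper does, passing from $Z_g$ to $X_g(i)$ by the unchainings and then blowing down $2(i+1)$ times. (Your labelling of $A_{2g-3}^{-1}$ as a $(g-2)$--unchaining, with contribution $2(g-2)g$, is the correct application of Proposition~\ref{T:topinv_unchaining1}.) Second, your $\pi_1$ argument for $i\leq g-1$ --- using that $t_c$ acts trivially on $\pi_1/N(c)$ to get $d_j\equiv c_j$ modulo the previously killed curves, and then quoting simple-connectivity of $Z_g'$ --- is correct and cleaner than the paper's explicit word computations.

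The genuine gap is the case $g$ odd, $i=g$. There you only outline two possible strategies: you assert that $H_1(X_g(g))$ is ``visibly'' free of rank two and that the two surviving generators commute, but neither claim is carried out, and neither is obvious. This is precisely the case where one must write the surviving vanishing cycles as explicit words in $a_1,b_1,\ldots,a_g,b_g$: the paper finds (up to conjugation) $a=a_2$, $a'=\gamma_1a_2$, $d_4=b_2a_2^{-1}b_1$, $d_{2k+1}=a_{k+1}^{-1}b_kb_{k-1}a_{k-1}$, etc., and only after imposing all of these does the quotient collapse to $\langle a_1,b_1\mid[a_1,b_1]\rangle\cong\Z\oplus\Z$. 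Note in particular that the relations do not simply kill the generators $a_j,b_j$ for $j\geq 2$ outright (e.g.\ $d_4=a=1$ forces $b_2=b_1^{-1}$), so the rank-two conclusion requires tracking how the remaining generators get identified with words in $a_1,b_1$; your proposed shortcut via the $x_j,x_j'$ does supply the commutator relation $\gamma_1=a'a^{-1}=1$, but the rest of the computation still has to be done. As written, the $i=g$ statement is unproved.
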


\begin{proof}
The positive factorization  $t_{\delta_1}t_{\delta_1^\prime}$ in Theorem~\ref{thm:3}, which gives rise to $X_g(i)$ and $X_g'(i)$, is obtained by  applying $A_3$-substitutions $i$ times and $A_{2g-3}$-substitution once to the factorization of $t_{\delta_1}t_{\delta_1^\prime}$ in Proposition~\ref{lem1}. (Note that $X_g(0) \neq Z_g$ since we still have the latter substitution in effect.) It means that for $g \geq 3$, $X_g(i)$ is obtained by applying $3$-unchaining $i$ times and $(2g-3)$-unchaining once to $Z_g$, and then $X_g'(i)$ is obtained by blowing-down $X_g(i)$ $2(i+1)$-times. The Euler characteristics and signature calculations then follow directly from those of $Z_g$ (Subsection~\ref{hurwitz}), and Proposition~\ref{T:topinv_unchaining1}.

To calculate $\pi_1(X_g(i))$ we can invoke Proposition~\ref{T:topinv_unchaining2}, or rely on the handle decomposition induced by the global Lefschetz fibration to carry out a more direct calculation as follows: Let $a_1,b_1,\ldots,a_g,b_g$ be generators of $\pi_1(\Sigma_g)$ as shown in Figure~\ref{F:generator_pi1} (here we regard $\Sigma_g^2$ as a subsurface of $\Sigma_g$). Set $\gamma_k=[a_1,b_1]\cdots[a_k,b_k]$, which is represented by a separating curve, and $a_0=b_0=a_{g+1}=\gamma_{g+1}=\gamma_0=1$ in $\pi_1(\Sigma_g)$. 
Up to conjugation in  $\pi_1(\Sigma_g)$, we have: 
\begin{align*}
&c_1 = a_1, \ c_2 = b_1, \ c_3 = b_1a_1^{-1}b_1^{-1}a_2, \ a = a_2, \ a^\prime =\gamma_1a_2, \\
&d_{2k} = b_k a_k^{-1}b_{k-1}b_{k-2}a_{k-2}b_{k-2}^{-1}, \ 2\leq k \leq g, \\
&d_{2k+1} = a_{k+1}^{-1}b_kb_{k-1}a_{k-1}, \ 2\leq k \leq g, \\
&e_{2k} = \gamma_ka_kb_kb_{k-1}a_{k-2}^{-1}\gamma_{k-2}^{-1}, \ 2\leq k\leq g, \\
&e_{2k+1} = \gamma_{k+1}a_{k+1}b_kb_{k-1}a_{k-1}^{-1}\gamma_{k-1}^{-1}, \ 2\leq k\leq g 
\end{align*}
Recall that $\pi_1(X) \cong  \pi_1(\Sigma_g) \, / \, N$, where $N$ is the subgroup of $\pi_1(\Sigma_g)$ normally generated by the vanishing cycles of the Lefschetz fibration $(X_g(i), f_g(i))$, which are the Dehn twist curves in its monodromy factorization. It is then easy to see that the collection of Dehn twist curves in the factorization of kill all the generators through the above identities. So $\pi_1(X_g(i)) =1$ for $i=0,1,2,\ldots,g-1$. 

When $g$ is odd, we no longer  have the Dehn twist curves $c_1, c_2, c_3$, and without the implied relations  $c_1=c_2=c_3=1$ (yielding $a_1=b_1=a_2=1$, and in turn $b_2=1$), the remaining relations give  $\pi_1(X_g(g))\cong\langle a_1,b_1 \mid \gamma_1=1 \rangle = \mathbb{Z}\oplus\mathbb{Z}$. 
\end{proof}

Finally, armed with our deeper knowledge of the presence of ---sufficiently many--- exceptional sections in each Lefschetz fibration $(X_g(i), f_g(i))$, we can utilize Theorem~\ref{KodFromLF} to determine the symplectic Kodaira dimension of majority of these manifolds:

\begin{theorem}[Symplectic Kodaira dimension of $X'_g(i)$] \label{T:Kodaira dim X_g(i)} 
Let $g \geq 3$, $0\leq i \leq g$ when $g$ is odd, and $0\leq i \leq g-1$ when $g$ is even.
In all of the following cases, $X'_g(i)$ is a symplectic $4$--manifold with Kodaira dimension
\[
	\kappa(X'_g(i)) = \begin{cases}
	-\infty &  \textit{if}  \ \  i \geq g - 1 \\
	\ \ 0 & \ \ \ i= g-2 \\
	\ \ 1 & \ \ \  i=g-2n  \ \ \text{or} \ i=g-3, 
	\end{cases}
	\] 
In all of the cases above with $i \leq  g- 2$, $X'_g(i)$ is spin, and thus minimal. Moreover,  $X_g'(g-1)\cong E(1)$ and  $X_g'(g) \cong S^2\times T^2$ when $g$ is odd. 
\end{theorem}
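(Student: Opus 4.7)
The plan is to combine the lift of the monodromy factorization to $\Gamma_g^{2(i+1)}$ from Theorem \ref{thm:3} with the invariant computations of Lemma \ref{topXg}, invoking Theorem \ref{KodFromLF} directly where the parity of $n = 2(i+1)$ cooperates, and otherwise arguing by the general classification of minimal symplectic $4$--manifolds. Since Kodaira dimension is a diffeomorphism invariant preserved under blow-ups, we pass freely between $X_g(i)$ and $X'_g(i)$.

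For $i \geq g-1$, one has $n = 2(i+1) \geq 2g > 2g-2$, so Theorem \ref{KodFromLF}(1) gives $\kappa = -\infty$. For $i = g - 2$, one has $n = 2g - 2$ and by Lemma \ref{topXg}, $b^+(X'_g(g-2)) = (e-2+\sigma)/2 = (24-2-16)/2 = 3 \neq 1$, so Theorem \ref{KodFromLF}(2) yields $\kappa = 0$. For the $\kappa = 1$ cases, the parity of $2(i+1)$ forbids a direct application of Theorem \ref{KodFromLF}(3), so I argue indirectly: granting that $X'_g(i)$ is simply-connected, spin, and hence minimal (established in the next paragraph), I compute $K_{\min}^2 = 2\,e(X'_g(i)) + 3\,\sigma(X'_g(i)) = 24(g-i) - 24(g-i) = 0$, which rules out $\kappa = 2$. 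Simply-connectedness combined with $b^+ = 2(g-i)-1 \geq 5$ for $i \leq g - 3$ rules out $\kappa = -\infty$, since a simply-connected minimal symplectic $4$--manifold with $\kappa = -\infty$ is $\CP$ or $S^2 \times S^2$, both with $b^+ = 1$; it also rules out $\kappa = 0$, since a simply-connected minimal symplectic Calabi--Yau must be homeomorphic to a $\K$ surface with $b^+ = 3$ by the Li--Bauer classification. Hence $\kappa(X'_g(i)) = 1$.

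For the spin assertion when $i \leq g - 2$, I apply Theorem \ref{T:condition spin LP} by exhibiting a quadratic form $q \colon H_1(\Sigma_g^{2(i+1)}; \Z/2\Z) \to \Z/2\Z$ refining the intersection pairing such that $q$ evaluates to $1$ on every vanishing cycle $x_k, x_k', c_1, c_2, c_3, d_j, e_j$ of the factorization from Theorem \ref{thm:3} and on at least one boundary parallel curve $\delta_j$. Using the symplectic basis $a_1, b_1, \ldots, a_g, b_g$ from Figure \ref{F:generator_pi1} and the explicit mod-$2$ homology classes of all vanishing cycles computed in the proof of Lemma \ref{topXg}, I set $q(a_k) = q(b_k) = 1$ and check case by case that each vanishing cycle evaluates to $1$; the boundary evaluation, a polynomial in the $q(a_k), q(b_k)$, then also works out to $1$. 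Minimality follows immediately, since a symplectic $(-1)$-sphere has odd self-intersection, incompatible with the even intersection form forced by a spin structure. Finally, for $i = g - 1$, Liu's rational-or-ruled theorem forces the simply-connected $X'_g(g-1)$ to be a blow-up of $\CP$ or $S^2 \times S^2$, and the invariants $(b^+, b^-) = (1, 9)$ identify it as $\CP \,\sharp\, 9\,\CPb \cong E(1)$; for odd $g$ and $i = g$, the minimal ruled surface over $T^2$ forced by $\pi_1 \cong \Z \oplus \Z$ and $e = \sigma = 0$ is pinned down to the trivial bundle $S^2 \times T^2$ by a spin check via Theorem \ref{T:condition spin LP}. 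The main technical obstacle throughout is the case analysis for the spin verification, which must separately handle odd and even $g$ because of the differing $(t_{c_1} t_{c_2} t_{c_3})$-subwords in the two factorizations.
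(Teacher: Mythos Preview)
Your argument for the $\kappa=1$ case has a genuine gap at $i=g-3$. You propose to deduce minimality from spinness, but $X'_g(g-3)$ is \emph{not} spin: its signature is $\sigma=-8(g-(g-3))=-24$, which is not divisible by $16$, so Rokhlin's theorem obstructs a spin structure (equivalently, $g+i=2g-3$ is odd, so Lemma~\ref{T:spin X_g'(i)} applies). Hence no quadratic form $q$ as in Theorem~\ref{T:condition spin LP} can exist for this $i$, and your route to minimality---and thus to ruling out $\kappa=2$ via $K^2=0$---collapses. The paper handles $i=g-3$ by a different mechanism: assuming $\kappa=2$, it invokes the Sato/Baykur--Hayano refinement of Theorem~\ref{KodFromLF} to conclude that the $2(i+1)=2g-4$ exceptional sections already account for \emph{all} exceptional spheres, so $X'_g(g-3)$ would be minimal; then $K^2=0$ contradicts $\kappa=2$. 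This argument does not touch spinness at all. (In fact the theorem's blanket spin assertion for $i\le g-2$ is inaccurate precisely at $i=g-3$; the paper's own proof never actually establishes spin in that case.)

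There is also a concrete error in your spin verification even where spinness does hold. On $\Sigma_g^{2(i+1)}$ the curve $c_3$ is homologous mod~$2$ to $a_1+a_2$ (it does not cross any $\delta_j$), so with $q(a_1)=q(a_2)=1$ and $a_1\cdot a_2=0$ you get $q(c_3)=1+1+0=0$, not $1$. The quadratic form that works (Lemma~\ref{T:existence quadratic form}) is defined instead by setting $q\equiv 1$ on the generators $c_1,\ldots,c_{2g},\delta_1,\ldots,\delta_n,\delta_2',\ldots,\delta_n'$ of $H_1(\Sigma_g^{2n};\Z/2\Z)$, and the parity condition $g+n$ odd (i.e.\ $g+i$ even) is exactly what makes $q(d_{2g+1})=1$ come out right. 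Your strategy for $i=g-2n$ is otherwise the same as the paper's; only the execution of the spin check and the treatment of $i=g-3$ need to be replaced.
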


\begin{proof}
First of all, let us observe that, since the Euler characteristic and signature satisfy $e = 2 - 2b_1 + b_2^+ + b_2^-$ and $\sigma = b_2^+ - b_2^-$, our $\pi_1$ calculations in Theorem~\ref{topXg} imply that for $g\geq3$, and $i=0,1,2,\ldots,g-2$, $b_1(X_g'(i))=0$, $b_2^+(X_g'(i)) = 2(g-i)-1$ and $b_2^-(X_g'(i))=10(g-i)-1$. So  $X_g'(i)$ (and $X_g(i)$) is neither a rational nor a ruled surface when  $i=0,1,2,\ldots.g-2$. 

With this observation, the cases of $\kappa(X'_g(i))=\infty$ and $0$ are now immediate from Theorem~\ref{KodFromLF}. It remains to prove that when $i=g-2n$ or $i=g-3$,  $\kappa(X'_g(i)=1$. 

By Lemma~\ref{T:spin X_g'(i)} in the next section, $X_g'(g-2n)$ is spin, and in particular, minimal. 
We can further deduce from the Euler characteristic and signature calculations in Lemma~\ref{topXg} that $K_{X_g'(g-2n)}^2= 2\eu(X_g'(g-2n)+3 \sigma(X_g'(g-2n))=0$.
Moreover, $b_2^+(X_g'(g-2n)) > 3$ by the above calculation, so it cannot have Kodaira dimension zero by \cite{Li4} (see also \cite{Bauer}). It follows that $\kappa(X_g'(g-2n))=1$.

By the above calculations again, $X_g^\prime(g-3)$ is not rational and ruled, and 
$b_2^+>3$. So it cannot have Kodaira dimension $-\infty$ or $0$. If it's Kodaira dimension is $2$, then because the Lefschetz fibration $f_{g}(g-3)$ has $2g-4$ sections, we deduce from a generalization of Theorem~\ref{KodFromLF} (see \cite{Sato, BaykurHayano}) that there are no other exceptional sections (which would necessarily be multisections).  So $X_g'(g-3)$ would be minimal. On the other hand, by Lemma~\ref{topXg}, we have $K_{X_g^\prime(g-3)}^2=2\eu(X_g^\prime(g-3))+3\sigma(X_g^\prime(g-3))=0$. \linebreak
This contradicts that $K^2>0$ for a minimal symplectic $4$--manifold with $\kappa=2$. 

Our discussion here shows that all these manifolds with $\kappa = 0$ or $1$ are spin.

For the diffeomorphism statements, recall that when $\kappa=-\infty$ we either have a rational or ruled surface in hand. So from our calculation of the topological invariants  in Lemma~\ref{topXg}, we deduce that   $X'_g(g-1) \cong E(1)$, and $X_g'(g)$ is diffeomorphic to either $S^2\times T^2$ or $S^2\tilde{\times} T^2$. As we'll see that $X_g'(g)$ is spin in  Theorem~\ref{T:spin X_g'(i)}), we conclude that $X_g'(g) \cong S^2\times T^2$.
\end{proof}

\begin{remark}
For the values of $i$ that are not covered in the theorem, we expect that $\kappa(X'_g(i))=1$, which would be immediate once these manifolds are seen to be minimal.
\end{remark}

Among the families we have, the pencils $(K_g, k_g):=(X'_g(g-2), f'_g(g-2))$ are perhaps the most interesting. It follows from our calculations of the topological invariants and by Freedman's theorem that each $K_g$ is a symplectic Calabi-Yau surface homeomorphic to the $\K$ surface. Moreover, they have the same Seiberg-Witten invariants as the $\K$ surface (which is a formal property for such an SCY).

\begin{corollary}[Pencils on symplectic Calabi-Yau homotopy $\K$ surfaces] \label{SCYK3s}
For each $g \geq 3$,  there is a symplectic genus-$g$ Lefschetz pencil $(K_g, k_g)$, where $K_g$ is a symplectic Calabi-Yau homotopy $\K$ surface, and $k_g$ has the monodromy factorization: 
\begin{align*}
& t_{\delta_{g-1}} \cdots t_{\delta_2} t_{\delta_1} t_{\delta_{g-1}^\prime} \cdots t_{\delta_2^\prime} t_{\delta_1^\prime}  \\
=& \begin{cases}
t_{x_{g-1}} \cdots t_{x_1} t_{x_{g}^\prime} \cdots  t_{x_1^\prime} (t_{c_1} t_{c_2} t_{c_3})^{8} t_{d_4}  \cdots t_{d_{2g+1}} t_{e_{2g+1}} \cdots  t_{e_4} & (g:\mbox{odd}) \\
t_{x_{g-1}} \cdots t_{x_1} t_{x_{g}^\prime} \cdots  t_{x_1^\prime} (t_{c_1} t_{c_2} t_{c_3})^{6} (t_{c_3} t_{c_2} t_{c_1})^2 t_{d_4} \cdots t_{d_{2g+1}} t_{e_{2g+1}} \cdots  t_{e_4} & (g:\mbox{even})
\end{cases}
\end{align*}
in  $\Gamma_g^{2g-2}$, where the curves $\delta_j, \delta'_j, c_j, d_j, e_j, x_j,x'_j$ are as in  Figures~\ref{F:curves1}\ref{vanishingcycles}\ref{sectioncurves}. 
\end{corollary}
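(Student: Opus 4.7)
The plan is to obtain Corollary~\ref{SCYK3s} as a direct specialization of the results established earlier in the section, applied at the parameter $i=g-2$. The combinatorial content of the monodromy factorization is already contained in Theorem~\ref{thm:3}, and the identification of the diffeomorphism type uses Lemma~\ref{topXg} together with Theorem~\ref{T:Kodaira dim X_g(i)}.

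First I would substitute $i=g-2$ into the factorization of $(X'_g(i),f'_g(i))$ provided by Theorem~\ref{thm:3}. Since $2(i+1)=2g-2$, the factorization lives in $\Gamma_g^{2g-2}$, and the boundary multi-twist side becomes $t_{\delta_{g-1}}\cdots t_{\delta_1}\,t_{\delta_{g-1}'}\cdots t_{\delta_1'}$. Straightforward bookkeeping gives exponent $4(g-i)=8$ in the odd case and $4(g-1-i)+2=6$ in the even case, and expanding the shorthands $D_g=t_{d_4}\cdots t_{d_{2g+1}}$ and $E_g=t_{e_{2g+1}}\cdots t_{e_4}$ recovers precisely the factorization appearing in the statement. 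This produces the genus--$g$ symplectic Lefschetz pencil $(K_g,k_g):=(X'_g(g-2),f'_g(g-2))$ carrying the stated monodromy.

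Next I would verify that $K_g$ has the homeomorphism type of the $\K$ surface. Lemma~\ref{topXg} applied at $i=g-2$ gives $e(K_g)=24$ and $\sigma(K_g)=-16$, and since $g-2\leq g-1$ it also gives $\pi_1(K_g)=1$. Theorem~\ref{T:Kodaira dim X_g(i)} tells us that $\kappa(K_g)=0$, and that $K_g$ is spin and minimal. Freedman's classification of simply-connected topological $4$--manifolds then identifies $K_g$ as homeomorphic to the $\K$ surface.

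Finally, to upgrade this to the symplectic Calabi-Yau conclusion, I would invoke the structure results of Li and Bauer for minimal symplectic $4$--manifolds with Kodaira dimension zero: any such manifold has the rational homology type of a torus bundle over a torus, an Enriques surface, or a $\K$ surface. Simple-connectedness of $K_g$ rules out the first two, leaving the $\K$ homology type; since $H^2(K_g;\Z)$ is torsion free and $K_{K_g}$ satisfies $K_{K_g}\cdot[\omega]=0$ and $K_{K_g}^2=0$ on a minimal manifold, the canonical class must vanish, so $K_g$ is a symplectic Calabi-Yau. The proof is essentially a compilation of already-proven facts, and the only point requiring care is the indexing in the specialization of Theorem~\ref{thm:3} to $i=g-2$; no genuinely new argument is needed.
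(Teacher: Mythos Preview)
Your proposal is correct and follows essentially the same approach as the paper: define $(K_g,k_g):=(X'_g(g-2),f'_g(g-2))$, read off the monodromy by specializing Theorem~\ref{thm:3} to $i=g-2$, and combine the invariant calculations in Lemma~\ref{topXg} with Theorem~\ref{T:Kodaira dim X_g(i)} and Freedman's theorem to identify $K_g$ as a simply-connected spin symplectic Calabi-Yau homotopy $\K$ surface. The paper's argument is precisely this, stated in the paragraph immediately preceding the corollary.
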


A natural question, which might be tractable  through the explicit handle diagrams prescribed by the factorizations above is: 

\begin{question}
Are the symplectic Calabi-Yau homotopy $\K$ surfaces $K_g$,  $g \geq 3$, all diffeomorphic to the standard $\K$ surface?
\end{question}

\medskip
\begin{remark} \label{contracting}
Observe that, we can view $K_g$ to be obtained from the rational surface $E(1)$ by the natural inverse of the unchaining surgery. We speculate that when this inverse operation can be performed in the complex category, it would correspond to contracting a  $(-2)$--curve (of genus $(g-2)$ for these examples) and than taking its minimal resolution. This leads to another interesting question: what are the symplectic Calabi-Yau surfaces one can get by contracting a symplectic $(-2)$--curve in a rational surface and taking its minimal symplectic resolution?
\end{remark}

\medskip
\section{Pencils on spin $4$--manifolds and Stipsicz's conjecture} \label{Sec:spin}
%

In this section, we will first extend a result of Stipsicz in \cite{Stipsicz_spin}, which gave a characterization of a Lefschetz fibration to be a spin $4$--manifold, to that of Lefschetz pencils (Theorem~\ref{T:condition spin LP}). These are given in terms of the $\Z_2$--homology classes of the vanishing cycles and exceptional sections of the associated positive factorization. We will then address Stipsicz's conjecture on the existence of $(-1)$--sections in fiber sum indecomposable Lefschetz fibrations \cite{Stipsicz_indecomposability} by providing counter-examples of any genus $g \geq 2$ (Theorem~\ref{T:counterex Stipsicz}).

\subsection{Spin structures on Lefschetz pencils} \

Here is our characterization of whether the total space of a Lefschetz pencil is spin, in terms of the associated positive factorization:

\begin{theorem}
[Spin characterization from pencil monodromies] \label{T:condition spin LP}
Let $(X,f)$ be a genus--$g$ Lefschetz pencil with a monodromy factorization \ $t_{c_1}\cdots t_{c_n}=t_{\delta_1}\cdots t_{\delta_p}$. Then $X$ admits a spin structure if and only if there exists a quadratic form $q:H_1(\Sigma_g^p;\mathbb{Z}/2\mathbb{Z})\to \mathbb{Z}/2\mathbb{Z}$ with respect to the intersection pairing of $H_1(\Sigma_g^p;\mathbb{Z}/2\mathbb{Z})$ such that $q(c_i)=1$ for any $i$ and $q(\delta_j)= 1$ for some $j$. 
\end{theorem}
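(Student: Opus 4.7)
The plan is to reduce the theorem to Stipsicz's original criterion \cite{Stipsicz_spin} for Lefschetz fibrations, by blowing up the base points of the pencil and analyzing spin structures on the complement of the resulting exceptional sections. Since each base point $b_j$ has a $4$-ball neighborhood whose boundary $S^3$ carries a unique spin structure extending uniquely across $D^4$, $X$ admits a spin structure if and only if $X \setminus B$ does. Blowing up $B$ produces the Lefschetz fibration $\widetilde{f}\colon \widetilde{X} \to \CL$ with disjoint exceptional sections $E_1, \ldots, E_p$, and $X \setminus B$ is naturally identified with the compact $4$--manifold--with--boundary $W := \widetilde{X} \setminus \bigsqcup_j \nu(E_j)$, on which the restriction of $\widetilde{f}$ is a Lefschetz fibration over $\CL$ with generic fiber $\Sigma_g^p$ (whose boundary circles $\delta_j$ are realized as Hopf fibers of the circle bundles $\partial \nu(E_j) \cong S^3 \to E_j$ of Euler number $-1$) and vanishing cycles $c_1, \ldots, c_n$.

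A spin structure on $W$ restricts to a quadratic refinement $q\colon H_1(\Sigma_g^p;\mathbb{Z}/2\mathbb{Z}) \to \mathbb{Z}/2\mathbb{Z}$ on a regular fiber, and conversely one constructs spin structures from such $q$ via the handle decomposition of the Lefschetz fibration. I would adapt Stipsicz's handle--by--handle analysis to the bounded--fiber setting: at each Lefschetz critical fiber the vanishing cycle $c_i$ is the attaching circle of a $2$--handle with framing $-1$ relative to the fiber framing, and the spin structure extends across this $2$--handle if and only if $q(c_i) = 1$. The pencil relation $t_{c_1} \cdots t_{c_n} = t_{\delta_1} \cdots t_{\delta_p}$ makes the total monodromy around $\CL$ equal to the boundary multi-twist, which acts trivially on quadratic refinements (each $\delta_j$ lies in the radical of the intersection pairing on $H_1(\Sigma_g^p;\mathbb{Z}/2\mathbb{Z})$), so any $q$ satisfying $q(c_i) = 1$ for every $i$ assembles into a globally defined spin structure on $W$; this spin structure then extends uniquely across the boundary components $\partial \nu(E_j) \cong S^3$ (each admitting a unique spin structure) and across the filling $4$--balls to produce a spin structure on $X$.

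For the condition ``$q(\delta_j) = 1$ for some $j$'', suppose instead that $q$ satisfies $q(c_i) = 1$ for every $i$ while $q(\delta_j) = 0$ for every $j$. Then $q$ vanishes on the radical $\langle \delta_1, \ldots, \delta_p \rangle \subset H_1(\Sigma_g^p;\mathbb{Z}/2\mathbb{Z})$, and hence descends through the capping--off surjection $H_1(\Sigma_g^p;\mathbb{Z}/2\mathbb{Z}) \twoheadrightarrow H_1(\Sigma_g;\mathbb{Z}/2\mathbb{Z})$ to a quadratic form $\bar q$ on the closed surface, with $\bar q(c_i) = 1$ for each $i$ (the $c_i$ are interior curves). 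Stipsicz's original theorem \cite{Stipsicz_spin} applied to the Lefschetz fibration $\widetilde{X} \to \CL$ with vanishing cycles $\{c_i\}$ would then force $\widetilde{X}$ to be spin, contradicting the fact that $\widetilde{X} = X \,\sharp\, p\,\CPb$ contains the exceptional $(-1)$--spheres $E_j$. Thus any $q$ with $q(c_i) = 1$ for all $i$ necessarily satisfies $q(\delta_j) = 1$ for some $j$, and combined with the construction of the previous paragraph this establishes both directions of the theorem.

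The hardest part will be verifying carefully that Stipsicz's framing computation for the Lefschetz $2$--handle attachment carries over unchanged to the bounded--fiber setting, and that the spin structure assembled fiberwise from $q$ is consistent with the intrinsic spin structures on the nontrivial circle-bundle boundary components $\partial \nu(E_j)$ of $W$; once this framing and boundary bookkeeping is in place, the remainder of the argument is essentially algebraic.
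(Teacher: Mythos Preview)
Your third paragraph contains a correct and elegant observation that the paper does not make explicitly: if $q(c_i)=1$ for all $i$ but $q(\delta_j)=0$ for all $j$, then $q$ descends to the closed fiber and Stipsicz's original criterion forces the blow-up $\widetilde X = X\,\sharp\,p\,\CPb$ to be spin, contradicting the presence of the exceptional spheres. This indeed shows that, for a pencil, the $\delta_j$-condition is automatic once the $c_i$-condition holds.

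However, your second paragraph has a genuine gap that this observation does not fill. You assert that any $q$ with $q(c_i)=1$ for all $i$ ``assembles into a globally defined spin structure on $W$'' because the boundary multi-twist acts trivially on quadratic refinements. Monodromy-invariance of $q$ only guarantees that the \emph{fiberwise} spin structures are consistent; it does not guarantee that the $4$-dimensional spin structure on $X' = (D^2\times\Sigma_g^p)\cup(\text{Lefschetz $2$-handles})$ extends over the second copy of $D^2\times\Sigma_g^p$ lying over a neighborhood of $\infty\in\CL$. That second piece decomposes as a single $2$-handle $D^2\times B$ (a ``section $2$-handle'', attached along a circle transverse to the fiber near some $\delta_j$) together with $3$-handles. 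Whether the spin structure extends across this $2$-handle is a framing question, and the answer depends precisely on the parity of $q(\delta_j)$ versus the section self-intersection. This is exactly the content of the paper's Lemma~\ref{T:spinstructure_complement}, proved via an explicit isotopy that tracks how the ball $B$ rotates around the boundary component as one traverses $\partial D^2$. Your argument would apply verbatim to a Lefschetz fibration with a $(-2)$-section rather than a $(-1)$-section, yet in that case the correct condition is $q(\delta_j)\equiv 0$, so the monodromy-invariance reasoning cannot be the whole story.

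You also misidentify the hard step in your final paragraph. Extending Stipsicz's Lefschetz-$2$-handle framing computation to bounded fibers is routine, and consistency at the boundary $\partial\nu(E_j)\cong S^3$ is automatic since $S^3$ carries a unique spin structure. The genuine difficulty is the section $2$-handle over $\infty$, and that is where the condition $q(\delta_j)=1$ enters. Once you carry out that framing analysis (as the paper does), your third-paragraph argument becomes an alternative, cleaner way to see that this condition is forced---but it does not let you bypass the analysis.
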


\noindent
\noindent
Since we can obtain the total space of a Lefschetz \textit{pencil} by attaching $4$-handles to the complement of $(-1)$-sections of a Lefschetz \textit{fibration}, Theorem~\ref{T:condition spin LP} will  follow from the following lemma: 

\begin{lemma}\label{T:spinstructure_complement}
Let $(X, f)$ be a Lefschetz fibration, $S_1,\ldots,S_p\subset X$ be disjoint sections of $f$ and \ $t_{c_1}\cdots t_{c_n}=t_{\delta_1}^{a_1}\cdots t_{\delta_p}^{a_p}$ be the corresponding monodromy factorization. \linebreak 
For $\mathcal{S} = \sqcup_i S_i$, the disjoint union of $S_i$s, the complement $X\setminus \mathcal{S}$ admits a spin structure if and only if there exists a quadratic form $q:H_1(\Sigma_g^p;\mathbb{Z}/2\mathbb{Z})\to \mathbb{Z}/2\mathbb{Z}$ with respect to the intersection pairing of $H_1(\Sigma_g^p;\mathbb{Z}/2\mathbb{Z})$, which satisfies:
\begin{enumerate}[(A)]
\item
$q(c_i)=1$ for any $i$, 
\item
$q(\delta_j)\equiv a_j$ for some $j$. 
\end{enumerate}
\end{lemma}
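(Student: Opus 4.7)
The plan is to adapt Stipsicz's argument from \cite{Stipsicz_spin} for closed Lefschetz fibrations, accounting for the extra data introduced by the sections $S_j$. Since spin structures on $X\setminus\mathcal{S}$ correspond bijectively to spin structures on the compact manifold with boundary $X\setminus\nu\mathcal{S}$, I will work with the latter.

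First I would set up a handle decomposition of $X\setminus\nu\mathcal{S}$ compatible with the Lefschetz fibration. Pick a regular value $p_0\in S^2$ and a disk $D\subset S^2$ containing all critical values with $p_0\in\partial D$, and set $D'=\overline{S^2\setminus D}$. Then $X\setminus\nu\mathcal{S}=V_1\cup V_2$, where $V_1$ is obtained from the thickened fiber $\Sigma_g^p\times D^2$ (one $0$-handle and $2g+p-1$ $1$-handles) by attaching $n$ Lefschetz $2$-handles along the $c_i$'s with framing $-1$, and $V_2\cong\Sigma_g^p\times D^2$ is the trivial $\Sigma_g^p$-bundle over $D'$. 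The two pieces are glued along their common boundary by a diffeomorphism whose restriction to the fiber realizes the monodromy $t_{c_1}\cdots t_{c_n}=t_{\delta_1}^{a_1}\cdots t_{\delta_p}^{a_p}$ in $\Gamma_g^p$.

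Second, by Johnson's theorem, spin structures on $\Sigma_g^p$ are in natural bijection with quadratic refinements $q\colon H_1(\Sigma_g^p;\mathbb{Z}/2)\to\mathbb{Z}/2$ of the mod-$2$ intersection pairing. Such a $q$ extends canonically to a product spin structure on $\Sigma_g^p\times D^2$, and the framing computation of \cite{Stipsicz_spin} shows that this spin structure extends across the Lefschetz $2$-handle attached along $c_i$ with framing $-1$ if and only if $q(c_i)=1$. This puts spin structures on $V_1$ in bijection with quadratic forms satisfying condition (A), giving both implications for that condition.

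The remaining task is to analyze when a spin structure on $V_1$ extends across $V_2$. The $4$-manifold $V_2$ is itself spin, and the obstruction to extension is the compatibility of spin structures on each boundary torus $T_j=\delta_j\times S^1$ sitting inside $\partial V_1\cap \partial V_2$. A neighborhood of the removed section $S_j$ in $X$ is a disk bundle over $S^2$ of Euler number $-a_j$, and tracking this bundle structure through the gluing, the spin structures on $T_j$ induced from the two sides differ by a twist whose parity is governed by $a_j$ and $q(\delta_j)$; a direct calculation should identify the agreement condition as a parity match $q(\delta_j)\equiv a_j\pmod 2$. The unique homological relation $\sum_j[\delta_j]=0$ in $H_1(\Sigma_g^p;\mathbb{Z}/2)$, together with the freedom to modify the spin structure on $V_2$ within the $H^1(\Sigma_g^p;\mathbb{Z}/2)$-torsor of choices, then reduces the $p$ torus-wise compatibility conditions to the single requirement (B): that $q(\delta_j)\equiv a_j$ holds for at least one $j$.

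The main obstacle I expect is this last step, namely precisely relating the Euler number $-a_j$ of the section's normal bundle to a parity condition on the boundary-torus spin structure, and carefully using the $H^1$-torsor action on $V_2$-spin structures to collapse the $p$ a priori conditions to the clean "for some $j$" statement in (B). Once this analysis is in place, both the "only if" direction (restricting a spin structure on $X\setminus\nu\mathcal{S}$ to the fiber and reading off $q$) and the "if" direction (building the spin structure piecewise from $q$) will follow.
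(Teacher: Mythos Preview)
Your setup and treatment of condition~(A) match the paper's proof exactly. For condition~(B), however, the paper takes a more direct route than your torus-by-torus analysis: it turns $V_2\cong\Sigma_g^p\times D^2$ upside down and observes that, relative to $V_1$, it consists of a single $2$-handle $D^2\times B$ (with $B$ a small disk in $\Sigma_g^p$ near the boundary component $\delta_j$, for any fixed choice of $j$) together with $2g+p-1$ $3$-handles. Since $3$-handles impose no spin obstruction, the entire extension problem collapses to a single $2$-handle attachment, hence a single $\mathbb{Z}/2$ condition, and the ``for some $j$'' phrasing of~(B) falls out immediately---no torsor bookkeeping is needed. The paper then computes the attaching map of this $2$-handle by writing down an explicit isotopy from the boundary-twist monodromy $t_{\delta_1}^{a_1}\cdots t_{\delta_p}^{a_p}$ to the identity (supported near $\delta_j$, rotating $B$ around that boundary component $a_j$ times), and reads off directly that the spin structure extends iff $q(\delta_j)\equiv a_j$.

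Your framing of the obstruction as ``$p$ torus-wise compatibility conditions'' to be reduced via the $H^1$-torsor action is somewhat off: the gluing of $V_1$ and $V_2$ is along the full $\Sigma_g^p$-bundle over $S^1$, not along the boundary tori $T_j$ (which lie in $\partial(X\setminus\nu\mathcal{S})$, not in the gluing hypersurface), and the genuine obstruction is a single $\mathbb{Z}/2$---whether the $S^1$-direction of the induced spin structure is bounding after trivializing the monodromy. Your Euler-number heuristic points in the right direction and can likely be made rigorous, but the upside-down handle decomposition bypasses the difficulty you flagged entirely.
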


\begin{proof}
The complement $X\setminus \mathcal{S}$ can be decomposed as follows: 
\begin{equation}\label{Eq:decomposition totalsp}
X\setminus \mathcal{S} = D^2\times \Sigma_g^p \cup (h_1\sqcup \cdots \sqcup h_n)\cup D^2\times \Sigma_g^p, 
\end{equation}
where $h_i$ is a $2$-handle attached along the vanishing cycle $c_i$. 
We can easily deduce that there is a one-to-one correspondence between the set of isomorphism classes of spin structures on $D^2\times\Sigma_g^p$ and the set of $\Z/2\Z$-valued quadratic functions on $H_1(\Sigma_g^p;\Z/2\Z)$ with respect to the intersection pairing. 
This correspondence assigns a spin structure $\mathfrak{s}$ of $D^2\times \Sigma_g^p$ to a quadratic function which takes the value $0$ on $c\in H_1(\Sigma_g^p;\Z/2\Z)$ if the restriction of $\mathfrak{s}$ on a circle representing $c$ can be extended to a spin structure over a $2$-manifold and takes the value $1$ otherwise (the reader can refer to \cite{Stipsicz_spin} for the case of $D^2\times \Sigma_g$).  
We denote the subset $D^2\times \Sigma_g^p \cup (h_1\sqcup \cdots \sqcup h_n)\subset X\setminus \mathcal{S}$ by $X'$. 
As is shown in \cite{Stipsicz_spin}, the condition (A) in Lemma~\ref{T:spinstructure_complement} holds if and only if the associated spin structure on $D^2\times\Sigma_g^p$ can be extended to that on $X'$. 
The latter $D^2\times \Sigma_g^p$ in the decomposition \eqref{Eq:decomposition totalsp} can be regarded as the union of a $2$-handle $D^2\times B$, where $B$ is a small ball close to the boundary component near $\delta_j$, and $2g+p-1$ $3$-handles. 
Thus, $X\setminus \mathcal{S}$ admits a spin structure if and only if there exists a spin structure on $X'$ which can be extended to that on $X'\cup (D^2\times B)$. 

In what follows, we identify a surface $\Sigma_g^p$ with a fiber in $\partial X'$. 
Let $A$ be an annulus neighborhood of the boundary component near $\delta_j$ which contains $B$ and is away from any of the vanishing cycles $c_1,\ldots,c_n$. 
We take a parallel transport self-diffeomorphism $\varphi$ of $\Sigma_g^p$ along the boundary of $f(X')$ so that it preserves $\partial \Sigma_g^p$ and $A$ point-wise. 
We can then obtain the following diffeomorphism:
\[
\partial X' \cong ([0,1]\times \Sigma_g^p)/(1,x)\sim (0,\varphi(x)). 
\]
By the assumption $\varphi$ is isotopic (relative to $A$) to the $a_j$-th power of the Dehn twist diffeomorphism along a simple closed curve near $\partial A$. 
The latter diffeomorphism is further isotopic to the identity via the isotopy (supported on a neighborhood of $A$) described in Figure~\ref{F:isotopy_rotation}. 
\begin{figure}[htbp]
\subfigure[The initial and the final configuration.]{
\includegraphics[height=35mm]{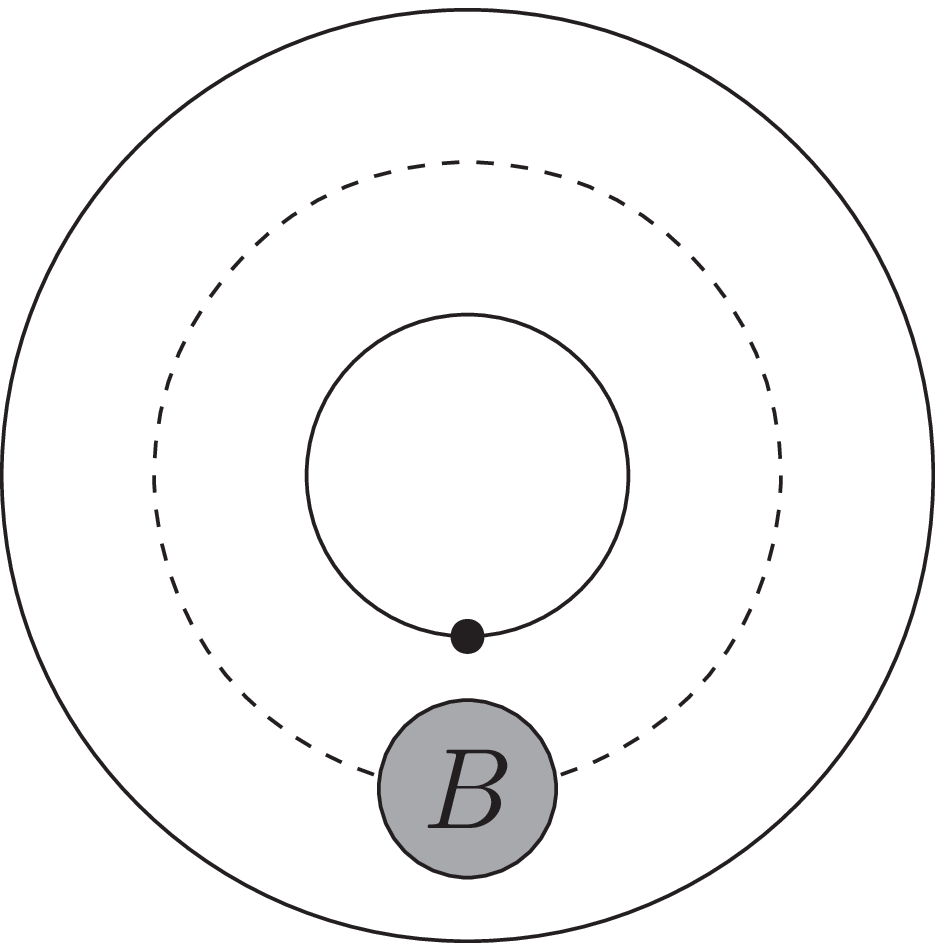}
\label{F:isotopy_rotation1}
}
\hspace{.5em}
\subfigure[As the time parameter increases, $B$ rotates along the meridian circle and the boundary component also rotates in the opposite way.]{
\includegraphics[height=35mm]{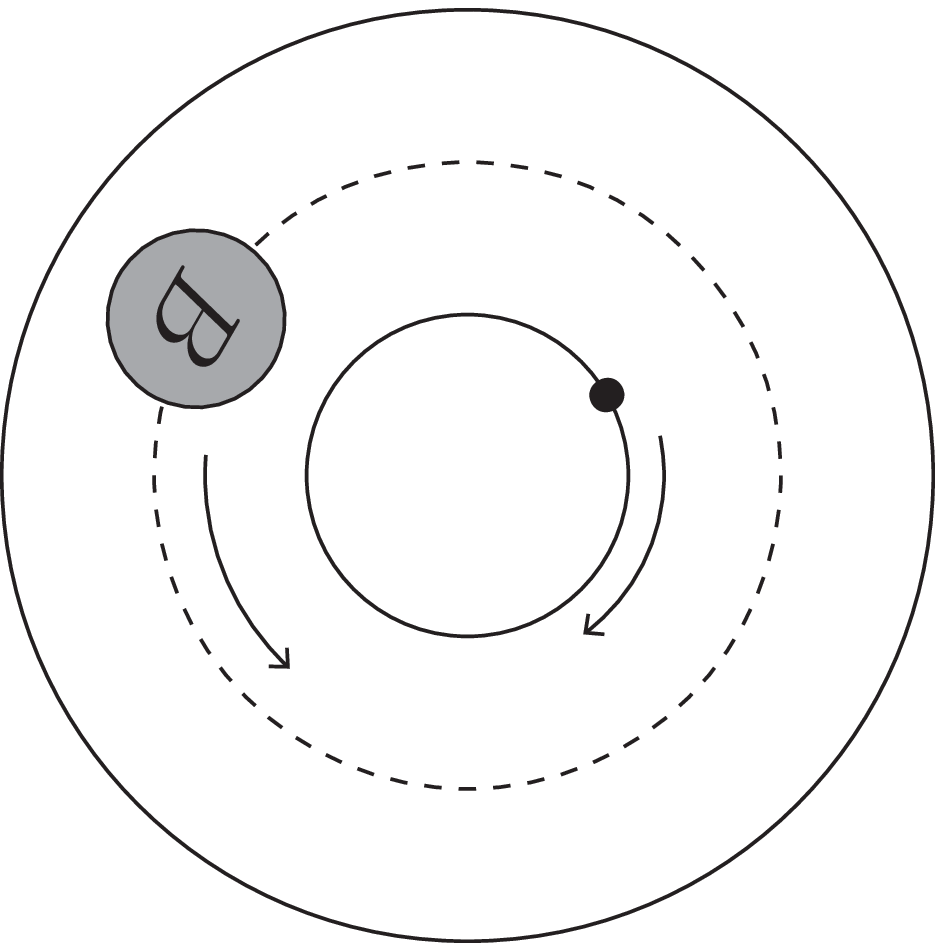}
\label{F:isotopy_rotation2}
}
\caption{The inner circles are the boundary components near $\delta_j$, while the outer circles are parallel to $\partial A$.}
\label{F:isotopy_rotation}
\end{figure}
As the figure shows, the isotopy makes $B$ rotate around the boundary component $a_j$ times keeping the inward tangent vector at the center of $B$ inward, and also makes the boundary component $-a_j$ times.  
We denote the concatenation of the two isotopies by $\varphi_t$ ($t\in [0,1]$). 
Identifying $\partial D^2\times\Sigma_g^p$ with $([0,1]\times \Sigma_g^p)/(1,x)\sim (0,x)$, we can explicitly give an attaching map of $D^2\times \Sigma_g^p$ to $X'$ as follows: 
\[
([0,1]\times \Sigma_g^p)/(1,x)\sim (0,x) \ni (t,x)\mapsto (t,\varphi_t(x))\in ([0,1]\times \Sigma_g^p)/(1,x)\sim (0,\varphi(x)). 
\]
We denote this attaching map by $\Phi$. 
The attaching map of the $2$-handle $D^2\times B$ is the restriction $\Phi|_{\partial D^2\times B}$. 
For a given $\Z/2\Z$-valued quadratic function $q$ on $H_1(\Sigma_g^p;\Z/2\Z)$, the restriction of the associated spin structure on the image \linebreak \mbox{$\Phi(D^2\times B)$} bounds a spin $2$-manifold if $q(a_j\delta_j)=0$ and does not bound otherwise. 
Moreover, it is easy to deduce from the definition of $\Phi$ that the pull-back of a spin structure $\mathfrak{s}$ on $\Phi(D^2\times B)$ by $\Phi|_{\partial D^2\times B}$ bounds a spin $2$-manifold if and only if either $a_j$ is odd and $\mathfrak{s}$ does not bound a spin $2$-manifold, or $a_j$ is even and $\mathfrak{s}$ bounds a spin $2$-manifold. 
The latter condition is equivalent to the condition (B) in Lemma~\ref{T:spinstructure_complement}. 
\end{proof}

\begin{remark}

The proof of Lemma~\ref{T:spinstructure_complement} also shows that for a quadratic form $q$ with the condition (A), the condition (B) holds if and only if $q(\delta_j)\equiv a_j$ for \textit{any} $j$. 
Furthermore, if we can find a quadratic form with the condition (A) and one of the section $S_1,\ldots,S_p$ has even self-intersection, the condition (B) is automatically satisfied. 
Indeed, $q$ induces a quadratic form on $H_1(\Sigma_g;\mathbb{Z}/2\mathbb{Z})$ and it can be extended to that on $H_1(\Sigma_g^p;\mathbb{Z}/2\mathbb{Z})$ so that the values of $\delta_i$'s are all zero. (An alternative way to deduce this observation is to find a spin structure on $X$ by applying Stipsicz's result to the induced quadratic form on $H_1(\Sigma_g;\mathbb{Z}/2\mathbb{Z})$.)

\end{remark}

\medskip
\subsection{Stipsicz's conjecture on exceptional sections} \

We will now prove that: 
\begin{theorem}[Counter-examples to Stipsicz's conjecture] \label{T:counterex Stipsicz}
For any $g\geq 3$, there exists a genus--$g$ fiber-sum indecomposable  Lefschetz fibration without any exceptional sections. 
\end{theorem}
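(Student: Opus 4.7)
The plan is to produce these counter-examples for each $g \geq 4$ using the family of Kodaira dimension one pencils from Theorem~\ref{thm:3} together with the braiding lantern substitution of \cite{BaykurHayano}; the $g=3$ case is already settled by \cite{BaykurHayano}. I would start from the pencil $(X'_g(g-3), f'_g(g-3))$ of Theorem~\ref{thm:3} and blow up all $2(g-2)$ base points to obtain a genus--$g$ Lefschetz fibration $(X_g(g-3), f_g(g-3))$ on $X'_g(g-3) \, \sharp \, 2(g-2) \CPb$, together with $2(g-2)$ explicit exceptional sections coming from the base points. By Theorem~\ref{T:Kodaira dim X_g(i)}, the minimal model $X'_g(g-3)$ is simply-connected, spin, and of Kodaira dimension one; in particular it is minimal with $b^+ > 3$.

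Next, I would apply the braiding lantern substitution of \cite{BaykurHayano} iteratively to the lifted monodromy factorization in $\Gamma_g^{2(g-2)}$ provided by Theorem~\ref{thm:3}. Each such substitution replaces a lantern subfactorization involving two boundary-parallel Dehn twists (corresponding to two exceptional sections) with a positive word in Dehn twists along interior curves, thereby trading two exceptional sections for a different local configuration while preserving the underlying symplectic $4$--manifold up to diffeomorphism. Availability of enough such lantern configurations is guaranteed by the iterated lantern identity used in the inductive proof of Theorem~\ref{thm:3}, where $t_{\delta_k} t_a t_{y_{k-2}} t_a t_{\delta_{k-1}} = t_{z_{k-1}} t_{y_{k-1}} t_{x_{k-1}}$ is the prototype. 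After $g-2$ such substitutions, one obtains a new genus--$g$ Lefschetz fibration $(X_g, F_g)$ on the same $4$--manifold with no exceptional sections visibly surviving in the factorization.

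The crucial step is then to show that $(X_g, F_g)$ admits \emph{no} exceptional section whatsoever, not merely that the visible ones have been destroyed. Since $X'_g(g-3)$ is minimal with $b^+ > 1$, the classification of symplectic $(-1)$--sphere classes in a non-minimal symplectic $4$--manifold implies that every such class in $H_2(X_g; \mathbb{Z})$ is, up to the action of self-diffeomorphisms preserving the symplectic canonical class, one of the $2(g-2)$ exceptional classes $E_1, \ldots, E_{2(g-2)}$. I would then use the modified monodromy factorization to compute the algebraic intersection of each such class with the new fiber class $[F_g]$ and verify that it is never $\pm 1$, thereby ruling out any exceptional section. Fiber-sum indecomposability would follow from a standard obstruction: any non-trivial decomposition $F_g = F_g^1 \#_\Sigma F_g^2$ would force the modified positive factorization to split into two non-trivial subfactorizations of the identity in $\Gamma_g$; I would rule this out by combining the $H_1$-class counts of the new vanishing cycles with the spin characterization of Theorem~\ref{T:condition spin LP}.

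The main obstacle is the penultimate step. Because $X_g$ is non-minimal by construction, exceptional spheres certainly exist, and one must prove that none of them happens to section the fibration. This requires a careful explicit tracking of the effect of each braiding lantern substitution on $[F_g]$ and on the classes $E_j$, together with an exclusion of more exotic symplectic $(-1)$--sphere classes of the form $E_j + k[F_g]$ by combining Seiberg--Witten constraints with the Kodaira dimension one and spin properties of $X'_g(g-3)$. It is precisely this blow-up-class analysis that makes the case $g \geq 4$ substantially more delicate than the genus three examples in \cite{BaykurHayano}.
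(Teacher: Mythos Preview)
Your overall strategy---start from a pencil in the family of Theorem~\ref{thm:3}, apply braiding lantern substitutions to convert pairs of exceptional sections into exceptional bisections, then argue no exceptional section survives---matches the paper's. However, there are two genuine gaps.

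First, and most seriously, your fiber-sum indecomposability argument does not work. Showing that a \emph{particular} positive factorization fails to split as a concatenation of two factorizations of the identity says nothing about decomposability, since a Hurwitz-equivalent factorization may split; and ``$H_1$-class counts'' combined with Theorem~\ref{T:condition spin LP} do not give an obstruction of this kind. The paper instead invokes Usher's theorem \cite{UsherMinimality} (see also \cite{BaykurMinimality}): the total space of a non-trivial fiber sum of relatively minimal Lefschetz fibrations is minimal. Since the constructed fibration has non-minimal total space (it carries exceptional bisections), indecomposability follows in one line. You should replace your argument with this.

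Second, your choice $i=g-3$ is problematic. By Lemma~\ref{T:spin X_g'(i)}, $X'_g(i)$ is spin if and only if $g+i$ is even, and $g+(g-3)=2g-3$ is odd; so $X'_g(g-3)$ is \emph{not} spin, and its minimality is not actually established (the argument for $i=g-3$ in the proof of Theorem~\ref{T:Kodaira dim X_g(i)} only rules out $\kappa=2$ by contradiction). The paper takes $i=0$ when $g$ is even and $i=1$ when $g$ is odd, so that $g+i$ is even and $X'_g(i)$ is spin, hence minimal. Then only one (respectively two) braiding lantern substitutions are needed, producing one (respectively two) exceptional bisections on $X'_g(i)\sharp\CPb$ (respectively $X'_g(i)\sharp 2\CPb$). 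Since the minimal model is minimal, these bisection classes are, up to sign, the \emph{only} exceptional classes, and each meets the fiber twice---so no exceptional section exists. This makes the ``no exceptional section'' step an immediate consequence of minimality, rather than the delicate Seiberg--Witten analysis you anticipate, and it also avoids having to locate $g-2$ disjoint braiding lantern configurations.
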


\noindent Unlike the handful of earlier counter-examples with $g=2,3$ \cite{Sato2010, BaykurHayano, AkhmedovMonden}, these examples will have Kodaira dimension $1$ when $g \geq 4$. 

We will describe these counter-examples with explicit monodromy factorizations. The spin characterization in Theorem~\ref{T:condition spin LP} will play a vital role here  to pin down the exact number of exceptional spheres in the total spaces. 

Let $c_1,\ldots,c_{2g+1},a,b,a',b'\subset \Sigma_g^{2}$ be the simple closed curves shown in Figure~\ref{F:curves1}.  As we implicitly did in Section~\ref{Sec:newpencils}, for any $n\geq 1$ we regard $\Sigma_g^2$ as a subsurface of $\Sigma_g^{2n}$ so that the curves $a,b,a',b'$ are embedded in $\Sigma_g^{2n}$ as shown in Figure~\ref{sectioncurves}.  In what follows, we use symbols representing simple closed curves in $\Sigma_g^{2n}$ (such as $c_i,a,b$) to represent homology classes in $H_1(\Sigma_g^{2n};\Z/2\Z)$ represented by the corresponding curves. 

\begin{lemma}\label{T:existence quadratic form}
For any $g\geq 3$ and $n\geq 1$ such that $g+n$ is odd, there exists a quadratic form $q:H_1(\Sigma_g^{2n};\Z/2\Z)\to \Z/2\Z$ with respect to the intersection pairing which satisfies 
\begin{enumerate}
\item
$q(c_1)= \cdots =q(c_{2g+1})=1$, 
\item
$q(d_4)=\cdots = q(d_{2g+1})=q(e_4)=\cdots = q(e_{2g+1})=1$, 
\item
$q(x_1)=\cdots =q(x_n)=q(x_1')=\cdots =q(x_n')=1$, and
\item
$q(\delta_1)=\cdots =q(\delta_n)=q(\delta_1')=\cdots =q(\delta_n')=1$. 
\end{enumerate}
\end{lemma}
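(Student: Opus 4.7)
The plan is to build $q$ by specifying its values on a suitable basis of $H_1(\Sigma_g^{2n};\Z/2\Z)$ and then verify conditions (1)--(4) via the elementary compatibility of a quadratic refinement with the linear relations satisfied among the specified vectors. Concretely, I would take the basis $\{a_1,b_1,\ldots,a_g,b_g,\delta_1,\ldots,\delta_n,\delta_1',\ldots,\delta_{n-1}'\}$, where the $a_i,b_i$ are lifts (via the capping map $\Sigma_g^{2n}\to\Sigma_g$) of a standard symplectic basis for $H_1(\Sigma_g;\Z/2\Z)$ and $\delta_n'$ is determined by the boundary relation $\sum_j\delta_j+\sum_j\delta_j'=0$. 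The guiding principle is that a prescribed system $q(v_\alpha)=1$ is realizable by a quadratic form if and only if, for every linear relation $\sum\varepsilon_\alpha v_\alpha=0$ in $H_1(\Sigma_g^{2n};\Z/2\Z)$, one has $\sum\varepsilon_\alpha+\sum_{\alpha<\beta}\varepsilon_\alpha\varepsilon_\beta\,(v_\alpha\cdot v_\beta)\equiv 0\pmod 2$. The proof thus reduces to identifying the essential linear relations among the listed vectors and verifying each.

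The central relation is the one arising from the chain $c_1,\ldots,c_{2g+1}$. The $g+1$ odd-indexed curves are pairwise disjoint, and a careful tracking of the homological contribution of the boundary on $\Sigma_g^{2n}$---noting in particular that the boundary circle on one side of the chain's filled subsurface, which on $\Sigma_g^2$ is just $\delta_1$, has on $\Sigma_g^{2n}$ split into the circles $\delta_1,\ldots,\delta_n$ due to the extra disks placed inside the pair-of-pants region of Figure~\ref{sectioncurves}---gives the relation
\[
[c_1]+[c_3]+\cdots+[c_{2g+1}]=[\delta_1]+\cdots+[\delta_n]
\]
in $H_1(\Sigma_g^{2n};\Z/2\Z)$. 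Since both the odd-indexed $c_i$'s and the distinct $\delta_\ell$'s are pairwise disjoint, all intersection contributions vanish, and the consistency check for $q\equiv 1$ reduces to $(g+1)\equiv n\pmod 2$, that is, $g+n$ odd---precisely the hypothesis. The analogous relation for the even-indexed chain curves and $\delta_1',\ldots,\delta_n'$ yields the same check, while the lone relation $\sum_j\delta_j+\sum_j\delta_j'=0$ among boundary classes gives $2n\equiv 0\pmod 2$, automatic.

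The remaining conditions require no new parity input. For $d_j$ and $e_j$: the Picard--Lefschetz action $t_c\cdot v=v+(v\cdot c)c$ on mod-$2$ homology combined with the chain intersection pattern (consecutive curves intersect once, non-consecutive are disjoint) yields $[d_j]\equiv[e_j]\equiv[c_j]+[c_{j-1}]+[c_{j-2}]+[c_{j-3}]$, and the consistency check then gives $q(d_j)=4+3\equiv 1\pmod 2$, automatic from condition (1). For $x_k$ and $x_k'$: these are constructed inductively through the lantern substitutions in the proof of Theorem~\ref{thm:3}, and each lantern relation on a four-holed sphere yields a consistency check that reduces to the vanishing of a sum of pairwise boundary intersections modulo $2$; the inductive base $x_1=b$, $x_1'=b'$ is immediate from the fact that $b,b'$ are among the symplectic basis elements on which we set $q$ to equal $1$.

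The main obstacle in executing this plan is the careful identification of the homology class of $c_{2g+1}$ in $H_1(\Sigma_g^{2n};\Z/2\Z)$ as picking up the boundary contribution $[\delta_1]+\cdots+[\delta_n]$, rather than merely $[a_g]$ as the $\pi_1(\Sigma_g)$-level formulas from the proof of Lemma~\ref{topXg} might naively suggest, together with the analogous bookkeeping for the inductive lantern-consistency of the $x_k$ and $x_k'$. Once these calculations are made, the parity $g+n$ odd emerges as the unique essential consistency requirement, and $q$ is then produced by extending its values from the basis to all of $H_1(\Sigma_g^{2n};\Z/2\Z)$.
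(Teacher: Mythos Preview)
Your strategy coincides with the paper's: the paper takes $\{c_1,\ldots,c_{2g},\delta_1,\ldots,\delta_n,\delta_2',\ldots,\delta_n'\}$ as a generating set of $H_1(\Sigma_g^{2n};\Z/2\Z)$, declares $q=1$ on it, and then checks the remaining curves via the explicit formulas $c_{2g+1}=c_1+c_3+\cdots+c_{2g-1}+\delta_1+\cdots+\delta_n$, $d_j=e_j=c_{j-3}+c_{j-2}+c_{j-1}+c_j$, $x_i=c_1+c_3+\delta_i$, $x_i'=c_1+c_3+\delta_i'$, and $\delta_1'=\sum_j\delta_j+\sum_{j\geq 2}\delta_j'$. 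Your relation-consistency framework is the same computation in different packaging, and you correctly isolate the one relation whose check is not automatic---the odd-chain relation, giving $(g+1)+n\equiv 0$.

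Two points in your write-up are wrong and should be repaired. First, there is \emph{no} ``analogous relation for the even-indexed chain curves and $\delta_1',\ldots,\delta_n'$'': the classes $c_2,c_4,\ldots,c_{2g}$ belong to the paper's generating set and are thus linearly independent from one another and from the boundary classes. (What does hold is $\sum_{\text{odd } i}[c_i]=\sum_j[\delta_j']$, but that follows from the odd relation together with $\sum_j[\delta_j]=\sum_j[\delta_j']$ and contributes nothing new.) Second, your treatment of condition (3) fails at the base: the curves $x_1=b$ and $x_1'=b'$ of Figure~\ref{F:curves1} are \emph{not} the standard symplectic generators $b_i$ of your chosen basis, and in any case you never assigned $q$-values to that basis, so appealing to them is vacuous. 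The clean fix is to drop the lantern induction and use the paper's direct formula $[x_i]=[c_1]+[c_3]+[\delta_i]$ (three pairwise disjoint classes), whence $q(x_i)=1+1+1=1$, and similarly for $x_i'$.
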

\begin{proof}
Since the elements $c_1,\ldots,c_{2g},\delta_1,\ldots,\delta_n,\delta_2',\ldots,\delta_n'$ generate $H_1(\Sigma_g^{2n};\Z/2\Z)$, there exists a quadratic form $q:H_1(\Sigma_g^{2n};\Z/2\Z)\to \Z/2\Z$ which assigns the value $1$ to all the elements in the generating set. 
The following equalities (in $H_1(\Sigma_g^{2n};\Z/2\Z)$) can be verified easily: 
{\allowdisplaybreaks
\begin{align*}
&c_{2g+1}=c_1+c_3+\cdots + c_{2g-1}+\delta_1+\delta_2+\cdots +\delta_n, \\
&d_j=e_j=c_{j-3}+c_{j-2}+c_{j-1}+c_j, \\
&x_i=c_1+c_3+\delta_i,~~x_i'=c_1+c_3+\delta_i', \\
&\delta_1'=\delta_1+\cdots +\delta_n+\delta_2'+\cdots +\delta_n'. 
\end{align*}
}
Using these equalities we can show by direct calculation that the quadratic form $q$ satisfies the desired conditions (we need the assumption on $g$ and $n$ here since $q(d_{2g+1})$ is equal to $g+n$). 
\end{proof}

Let $X_g'(i)$ be the symplectic genus--$g$ Lefschetz pencil with $2(i+1)$ base points we constructed in Section~\ref{Sec:newpencils}, which has the following monodromy factorization: 
\begin{align*}
&t_{\delta_{i+1}} \cdots t_{\delta_1} t_{\delta_{i+1}^\prime} \cdots t_{\delta_1^\prime} \\
=&\begin{cases}
t_{x_{i+1}} \cdots t_{x_1} t_{x_{i+1}^\prime} \cdots t_{x_1^\prime} (t_{c_1} t_{c_2} t_{c_3})^{4(g-i)} D_g E_g & (g\mbox{~:~odd}), \\
t_{x_{i+1}} \cdots t_{x_1} t_{x_{i+1}^\prime} \cdots t_{x_1^\prime} (t_{c_1} t_{c_2} t_{c_3})^{4(g-i)-2} (t_{c_3} t_{c_2} t_{c_1})^2 D_g E_g & (g\mbox{~:~even}). 
\end{cases}
\end{align*}
%

\begin{lemma}\label{T:spin X_g'(i)}
The manifold $X_g'(i)$ is spin if and only if $g+i$ is even. 
\end{lemma}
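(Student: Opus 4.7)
The plan is to apply Theorem~\ref{T:condition spin LP} to the explicit monodromy factorization of $X_g'(i)$ recorded in Theorem~\ref{thm:3}. With $n=i+1$, we seek a quadratic form $q:H_1(\Sigma_g^{2n};\Z/2\Z)\to \Z/2\Z$ taking value $1$ on every vanishing cycle of the pencil and on some boundary curve $\delta_k$; by the remark following Lemma~\ref{T:spinstructure_complement}, once the first condition is in place the latter is equivalent to $q=1$ on \emph{every} $\delta_k,\delta_k'$, since all boundary components appear with multiplicity $a_k=1$ in the factorization.

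For the ``if'' direction, assume $g+i$ is even, equivalently $g+n$ is odd. Lemma~\ref{T:existence quadratic form} then directly produces such a quadratic form: it takes value $1$ on $c_1,\ldots,c_{2g+1}$, $d_4,\ldots,d_{2g+1}$, $e_4,\ldots,e_{2g+1}$, $x_1,\ldots,x_n$, $x_1',\ldots,x_n'$, and on each boundary curve $\delta_k,\delta_k'$. This set covers all vanishing cycles occurring in the factorization of Theorem~\ref{thm:3} (for both parities of $g$), so Theorem~\ref{T:condition spin LP} gives a spin structure on $X_g'(i)$.

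For the converse, suppose $X_g'(i)$ is spin with associated quadratic form $q$. Using the homological identities $d_j=c_{j-3}+c_{j-2}+c_{j-1}+c_j$ and $x_k=c_1+c_3+\delta_k$ in $H_1(\Sigma_g^{2n};\Z/2\Z)$ from the proof of Lemma~\ref{T:existence quadratic form}, together with the routine intersection counts along the chain, the constraints $q(d_j)=1$ and $q(x_k)=1$ propagate $q(c_1)=q(c_2)=q(c_3)=1$ inductively to $q(c_j)=1$ for all $j=4,\ldots,2g+1$, and they force $q(\delta_k)=1$ (and by symmetry $q(\delta_k')=1$).

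The crucial consistency check comes from the extra homological identity
\[c_{2g+1}=c_1+c_3+\cdots+c_{2g-1}+\delta_1+\cdots+\delta_n\]
in $H_1(\Sigma_g^{2n};\Z/2\Z)$, also recorded in the proof of Lemma~\ref{T:existence quadratic form}. As all pairwise intersections among $c_1,c_3,\ldots,c_{2g-1},\delta_1,\ldots,\delta_n$ vanish, this yields $q(c_{2g+1})\equiv g+n\pmod{2}$, which combined with the already-established $q(c_{2g+1})=1$ forces $g+n$ to be odd, i.e.\ $g+i$ to be even. No serious obstacle is expected; the main care is in verifying that the inductive derivation only uses identities valid in the $\Z/2\Z$-homology of $\Sigma_g^{2n}$, and that the extra factor $(t_{c_3}t_{c_2}t_{c_1})^2$ appearing in the even case contributes no new vanishing cycles beyond $c_1,c_2,c_3$.
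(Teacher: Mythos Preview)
Your ``if'' direction is identical to the paper's: apply Lemma~\ref{T:existence quadratic form} and Theorem~\ref{T:condition spin LP}. Your ``only if'' direction, however, takes a genuinely different route. The paper dispatches this case in one line by observing that $\sigma(X_g'(i))=-8(g-i)$ (Lemma~\ref{topXg}), so when $g+i$ is odd the signature is $8\pmod{16}$ and Rokhlin's theorem rules out a spin structure. You instead stay entirely inside the quadratic-form framework: from the vanishing cycles $c_1,c_2,c_3$ and the relations $d_j=c_{j-3}+c_{j-2}+c_{j-1}+c_j$ you propagate $q(c_j)=1$ up to $j=2g+1$, read off $q(\delta_k)=1$ from the $x_k$'s, and then the homological identity $c_{2g+1}=c_1+c_3+\cdots+c_{2g-1}+\delta_1+\cdots+\delta_n$ forces $q(c_{2g+1})\equiv g+n$, hence $g+n$ odd. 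This is correct and more self-contained---it never leaves the combinatorics of Theorem~\ref{T:condition spin LP}---whereas the paper's argument is much shorter but imports Rokhlin. One small caveat: your propagation needs $c_1,c_2,c_3$ among the vanishing cycles, which fails only when $g$ is odd and $i=g$; but then $g+i=2g$ is even and there is nothing to prove, so you should note this edge case explicitly.
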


\begin{proof}
Since the signature of $X_g'(i)$ is equal to $-8(g-i)$ (see Lemma~\ref{topXg}), $X_g'(i)$ is not spin if $g+i$ is odd. On the other hand, when $g+i$ is even, the quadratic form obtained in Lemma~\ref{T:existence quadratic form} above satisfies the necessary conditions in Theorem~\ref{T:condition spin LP}, so $X'_g(i)$ is spin in this case.
\end{proof}

We are now ready to prove the main result of this subsection:

\begin{proof}[Proof of Theorem~\ref{T:counterex Stipsicz}]
Recall that, by Lemma~\ref{topXg} that the signature and the Euler characteristic $X_g'(i)$ are respectively equal to $\eu(X'_g(i))=12(g-i)$ and $\sigma(X'_g(i))=-8(g-i)$, and $X_g'(i)$ is simply--connected when $i<g$.  So, $b_2^+(X_g'(i))=2(g-i)-1$ if $i<g$ and $K_{X_g'(i)}^2=2\chi+3\sigma=0$, where $K_{X_g'(i)}$ is the canonical class of  $X_g'(i)$. 

Suppose that $g$ is even. The manifold $X_g'(0)$ is spin by Lemma~\ref{T:spin X_g'(i)}, and in particular minimal. Thus, $X_g(0)$ has positive Kodaira dimension since $b^+_2(X_g'(0))>3$ \linebreak (see Theorem~\ref{KodFromLF}).  Since $K_{X_g'(0)}^2=0$, we have $\kappa( X_g(0))=1$. 
The monodromy factorization of the Lefschetz fibration on $X_g(0)$ can be changed by Hurwitz moves as follows: 
{\allowdisplaybreaks
\begin{align*}
&t_{x_1} t_{x_1^\prime} (t_{c_1} t_{c_2} t_{c_3})^{4g-2} (t_{c_3} t_{c_2} t_{c_1})^2 D_g E_g \\
\sim & \underline{t_{x_1} t_{x_1^\prime} t_{c_1}^2}t_{t_{c_1}^{-1}(c_2)}t_{c_3}t_{c_2}t_{c_3} (t_{c_1} t_{c_2} t_{c_3})^{4g-4} (t_{c_3} t_{c_2} t_{c_1})^2 D_g E_g.
\end{align*}
}

The curves $x_1, x_1',\delta_1,\delta_1'$ and two disjoint curves parallel to $c_1$ bound a sphere with six boundary components. 
Thus, we can apply the \emph{braiding lantern substitution} \cite[Lemma 5.1]{BaykurHayano}) the underlined part above.  This substitution replaces two disjoint exceptional sections with an exceptional bisection.  Furthermore, by \cite[Theorem 3.1]{EndoGurtas}), this amounts to a rational blowdown of a symplectic $(-4)$--sphere which can be viewed as the union of the four holed sphere on the fiber and the four disjoint vanishing cycles in the lantern configuration, whereas by \cite[Lemma 5.1]{Gompf} (also see \cite[Proposition 6.1]{BaykurHayano}), such a rational blowdown has the same affect as regular blowdown whenever this $(-4)$--sphere intersects an exceptional sphere once, which is the case here. (Either one of the exceptional spheres corresponding to the two boundary twists in the braiding lantern configuration hits the $(-4)$--sphere once.)  So the resulting symplectic $4$--manifold is again $X_g(0)= X'_g(0) \# \CPb$. Since $X'_g(0)$ is minimal, any exceptional sphere in $X_g'(0)\sharp \CPb$ is homologous (up to sign) to this bisection, so it should intersect a regular fiber algebraically and geometrically twice. 
Thus, the Lefschetz fibration we obtained after the braiding lantern substitution cannot admit any exceptional sections.

Next, assume $g$ is odd. Once again, the manifold $X_g'(1)$ is spin by Lemma~\ref{T:spin X_g'(i)}, and therefore minimal.  As in the previous paragraph, we can show that $\kappa(X_g(1))=1$  if $g\geq 4$. Whereas, by Theorem~\ref{T:Kodaira dim X_g(i)},$\kappa(X_3(1))=0$. The manifold $X_g(1)$ admits a Lefschetz fibration with the following monodromy factorization: 
{\allowdisplaybreaks
\begin{align*}
t_{\delta_{2}}t_{\delta_1} t_{\delta_{2}^\prime} t_{\delta_1^\prime}= &t_{x_{2}}t_{x_1} t_{x_{2}^\prime}t_{x_1^\prime} (t_{c_1} t_{c_2} t_{c_3})^{4g-4} D_g E_g \\
\sim & \underline{t_{x_{2}}t_{x_{2}^\prime}t_{c_1}^2}\cdot \underline{t_{x_1} t_{x_1^\prime}t_{c_3}^2} t_{t_{c_1}^{-1}t_{c_3}^{-2}(c_2)} t_{t_{c_3}^{-1}(c_2)} (t_{c_1} t_{c_2} t_{c_3})^{4g-6} D_g E_g 
\end{align*}
}
As we did above, we can apply braiding lantern substitution at the underlined parts. 
The resulting Lefschetz fibration has two exceptional bisections, and the total space of it is $X_g'(1)\sharp 2\CPb$, by the same arguments as above.
Since $X_g'(1)$ is minimal and has non-negative Kodaira dimension, the homology classes represented by the two bisections are the only classes (up to sign) represented by exceptional spheres. 
Thus, the Lefschetz fibration on $X_g'(1)\sharp 2\CPb$ cannot admit any exceptional sections.

Finally, by Usher's theorem on minimality of symplectic fiber sums \cite{UsherMinimality} (also see \cite{BaykurMinimality}) the presence of exceptional spheres in total spaces of all the Lefschetz fibrations above imply that none can be a fiber sum of nontrivial Lefschetz fibrations. 
\end{proof}

\begin{remark}
There are many more counter-examples one can produce using similar arguments and ingredients. Note that the product $t_{x_{i+1}}\cdots t_{x_{1}}t_{x_{i+1}'}\cdots t_{x_1'}$ is Hurwitz equivalent to $t_{x_{i+1}}t_{x_{i+1}'}\cdots t_{x_1}t_{x_1'}$. We can change the product \[(t_{c_1}t_{c_2}t_{c_3})^{4(g-i)-l}(t_{c_3}t_{c_2}t_{c_1})^l \  \ (\text{where } l=0  \text{ or } 2)\]
to $t_{c_1}^{4(g-i)}t_{c_3}^{4(g-i)}W$ via Hurwitz moves, where $W$ is some product of Dehn twists.
Thus, as we did in the proof of Theorem~\ref{T:counterex Stipsicz}, we can apply braiding lantern substitution to the Lefschetz fibration on $X_g(i)$ obtained in Section~\ref{Sec:newpencils} so that the resulting Lefschetz fibration has $i+1$ exceptional bisections, when $i+1$ is less than or equal to $4(g-i)$.  If $g+i$ is even and $i\leq g-2$, then we can further prove that the resulting fibration cannot admit any exceptional sections. Hence, we record that at least for any $g\leq 9$, there exists a genus--$g$ Lefschetz fibration $(X,f)$, which is a counter-example to the Stipsicz conjecture, where $\kappa(X)=0$.
\end{remark}

\medskip

\section{Further applications } \label{Sec:further}

In this last section, we will show that by combining unchaining surgery with rational blowdowns, 
we can produce further interesting examples. The two applications we present here, one regarding the topology of symplectic $4$--manifolds and one regarding that of pencils, will utilize the Lefschetz pencils $X'_g(i)$ we constructed using unchaining. 

\subsection{Exotic $4$--manifolds with $b^+=3$ via genus-$3$ fibrations}\label{Sec:further1} \

In this subsection we will prove the following theorem:

\begin{theorem}\label{exotic}
There are  genus--$3$ Lefschetz fibrations $(X_j, f_j)$, for   $j=0,1,2, 3$, where each $X_j$ is a minimal symplectic $4$--manifold homeomorphic but not diffeomorphic to $3\, \CP \sharp \,(19-j)\, \CPb$, and  each $f_{j+1}$ has a monodromy factorization obtained from that of $f_{j}$ by a lantern substitution.
\end{theorem}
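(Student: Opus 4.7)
The plan is to realize $(X_0, f_0)$ as a minimal genus--$3$ Lefschetz fibration arising from the unchaining machinery of Section~\ref{Sec:newpencils}, and then to build $(X_1, f_1)$, $(X_2, f_2)$, $(X_3, f_3)$ from $(X_0,f_0)$ by three successive plain lantern substitutions in the monodromy factorization. A natural starting point is one of the fibrations $(X_3(i), f_3(i))$ from Theorem~\ref{thm:3} with $i=0$ or $i=1$. Applying Hurwitz moves in the spirit of the proof of Theorem~\ref{T:counterex Stipsicz} to the explicit monodromy factorization, one reorganizes the product so that several disjoint lantern words $t_{\delta_4}t_{\delta_3}t_{\delta_2}t_{\delta_1}$ appear consecutively. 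A handful of preliminary substitutions (blending braiding--lantern substitutions from \cite[Lemma 5.1]{BaykurHayano} with plain lantern substitutions, some of which amount to ordinary blowdowns and others to genuine rational blowdowns) then reduce this to a fibration $(X_0, f_0)$ whose total space is minimal with invariants matching those of $3\CP \sharp 19\CPb$, and whose monodromy still contains three further disjoint plain lantern configurations.

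The topological invariants of each $X_j$ follow directly from the monodromy: a lantern substitution replaces four Dehn twists by three, so $e(X_j) = e(X_0) - j = 24 - j$ and $\sigma(X_j) = \sigma(X_0) + j = -16 + j$, while $b_1$ and $b_2^+$ are unchanged. Simple connectedness is established by the argument in Lemma~\ref{topXg}, using the vanishing cycles to kill the generators of $\pi_1(\Sigma_3)$, and it is preserved under lantern substitution since the new vanishing cycles lie on the same reference fiber. The non-spin property of each $X_j$ is verified via Theorem~\ref{T:condition spin LP}, by showing that no $\mathbb{Z}/2\mathbb{Z}$-valued quadratic form on $H_1(\Sigma_3^p;\mathbb{Z}/2\mathbb{Z})$ can simultaneously satisfy both of the required conditions. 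Freedman's theorem then yields that $X_j$ is homeomorphic to $3\CP \sharp (19-j)\CPb$. The non-diffeomorphism claim comes from Seiberg--Witten theory: a minimal symplectic $4$--manifold with $b_2^+ \geq 2$ and non-negative Kodaira dimension has non-trivial SW basic classes (by Taubes), whereas the rational surface $3\CP \sharp (19-j)\CPb$ has vanishing SW invariants. The bound $\kappa(X_j) \geq 0$ is read off from the lifted monodromy factorization using Theorem~\ref{KodFromLF}, once minimality is in hand.

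The main obstacle is precisely the minimality of every $X_j$. By \cite[Theorem 3.1]{EndoGurtas}, each lantern substitution amounts to a symplectic rational blowdown of a $(-4)$--sphere $C \subset X_{j-1}$, and by \cite[Proposition 6.1]{BaykurHayano} this coincides with an ordinary blowdown precisely when $C$ meets a symplectic $(-1)$--sphere once. I would therefore arrange the three lantern configurations so that the corresponding $(-4)$--spheres are disjoint from every exceptional sphere in $X_{j-1}$, and then invoke Usher's minimality theorem \cite{UsherMinimality, BaykurMinimality} to conclude that $X_j$ remains minimal. Carrying this out requires a complete accounting of the exceptional classes of each $X_{j-1}$, via analysis of the sections visible in the lift of the monodromy factorization (much as in Section~\ref{Sec:newpencils}); this is the most delicate part of the argument, and it is made tractable by starting from a minimal $X_0$ whose section data is explicitly controlled. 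Finally, since none of the lantern substitutions alter the divisibility properties of the fiber class, the relation $f_{j+1}$ is obtained from $f_j$ by a single lantern substitution is automatic by construction.
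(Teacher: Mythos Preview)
Your overall shape is right and very close to the paper's approach: start from the genus--$3$ fibration $(X_3(1),f_3(1))$, manipulate the monodromy by Hurwitz moves until enough lantern configurations are visible, perform seven successive lantern substitutions (Lemma~\ref{sevenLS}), compute $e$, $\sigma$, $\pi_1$, and invoke Freedman and Taubes. The paper then records only the last four manifolds (re-indexed as $j=k-4$) as the minimal ones in the theorem.

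There is, however, a real gap at the step you yourself flag as the most delicate: how do you actually \emph{prove} that your $X_0$ is minimal? Your proposed mechanism---arranging each lantern $(-4)$--sphere to be \emph{disjoint} from every exceptional sphere---is the wrong way around. If a symplectic $(-4)$--sphere $C$ is disjoint from an exceptional sphere $E$, then $E$ sits entirely in the complement of $C$ and survives the rational blowdown; the result is therefore \emph{not} minimal. So disjointness cannot be what produces a minimal $X_0$ from the non-minimal $X_3(1)$ after some preliminary substitutions. What the paper does instead is the opposite: the first three lantern substitutions are braiding lanterns that successively merge the four exceptional sections of $X_3(1)$ into a single exceptional quadruple-section $S_{11'22'}$, and the fourth lantern $(-4)$--sphere is then arranged to meet $S_{11'22'}$ \emph{exactly twice} (algebraically and geometrically). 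It is precisely this intersection number two, via \cite[Lemma~1.1]{Dorfmeister1}, that forces $X_4$ (your $X_0$) to be minimal. Once that is in hand, the further rational blowdowns preserve minimality by the same result, since a minimal $X_{j-1}$ has no exceptional spheres at all. Note also that the reference you want here is Dorfmeister's extension to spheres rather than Usher's original theorem; the latter does not directly cover genus--$0$ fiber sums.

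Two smaller points. Your detour through Theorem~\ref{T:condition spin LP} for non-spin is unnecessary: every $f_j$ in the paper's construction has a reducible fiber (the separating curves $s,s',v$ appear among the lantern outputs), and a component of a reducible fiber already gives a surface of odd self-intersection. And you do not need Kodaira dimension or minimality to distinguish $X_j$ from $3\CP\sharp(19-j)\CPb$: Taubes gives non-vanishing Seiberg--Witten invariants for \emph{any} closed symplectic $4$--manifold with $b_2^+>1$, whereas the connected sums have vanishing invariants.
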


We will need the following lemma in our proof:
\begin{lemma}\label{sevenLS}
Let the curves $c_j, a, a'$ be as in Figures\ref{F:curves1} and\ref{vanishingcycles}. One can perform $7$ consecutive lantern substitutions within the product  $t_a^2 t_{a^\prime}^2 t_{c_1}^6 t_{c_3}^3 t_{c_5}^3 t_{c_7}^6$ in $\Gamma_3$, up to Hurwitz moves. 
\end{lemma}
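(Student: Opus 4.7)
The plan is to exhibit seven embedded $4$-holed spheres in $\Sigma_3$ whose boundary multisets are contained in the multiset of Dehn-twist curves of $t_a^2 t_{a^\prime}^2 t_{c_1}^6 t_{c_3}^3 t_{c_5}^3 t_{c_7}^6$, and to apply the lantern relation to each in turn. Since the six curves $a, a^\prime, c_1, c_3, c_5, c_7$ are pairwise disjoint (cf.\ Figures~\ref{F:curves1} and \ref{vanishingcycles}), all the corresponding Dehn twists commute, so Hurwitz moves amount to free reorderings of the factors, and it suffices to identify the seven lantern configurations.

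Three primitive lantern families are immediate. First, the tubular neighborhood $\Sigma_L \subset \Sigma_3$ of the chain $c_1,c_2,c_3$ is diffeomorphic to $\Sigma_1^2$ with boundary $a \sqcup a^\prime$; cutting it along the non-separating curve $c_1$ (resp.\ $c_3$) produces an embedded $\Sigma_0^4 \subset \Sigma_3$ with boundary $\{a,a^\prime,c_1,c_1\}$ (resp.\ $\{a,a^\prime,c_3,c_3\}$), yielding lantern relations
\[
t_a t_{a^\prime} t_{c_1}^2 = t_{x_1} t_{y_1} t_{z_1}, \qquad t_a t_{a^\prime} t_{c_3}^2 = t_{x_2} t_{y_2} t_{z_2},
\]
with interior curves lying in $\Sigma_L$. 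Second, since any $g+1$ pairwise disjoint non-separating simple closed curves on $\Sigma_g$ are linearly dependent in $H_1(\Sigma_g;\mathbb{Z}/2)$ (by isotropy), the four disjoint curves $c_1,c_3,c_5,c_7$ satisfy $[c_1]+[c_3]+[c_5]+[c_7] \equiv 0$ in $H_1(\Sigma_3;\mathbb{Z}/2)$; a direct cutting argument then shows they split $\Sigma_3$ into two embedded $4$-holed spheres $P_+, P_-$, each giving a lantern relation $t_{c_1} t_{c_3} t_{c_5} t_{c_7} = t_{u_\pm} t_{v_\pm} t_{w_\pm}$ with interior curves in $P_\pm$.

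A direct multiplicity count shows that these three families together contribute at most four substitutions, constrained by the availability of $t_a, t_{a^\prime}, t_{c_3}, t_{c_5}$. The remaining three substitutions must use secondary lantern configurations whose boundary circles combine newly introduced interior curves $x_i, y_i, z_i, u_\pm, v_\pm, w_\pm$ with surviving original Dehn twists (copies of $t_{c_1}$ and $t_{c_7}$). The key point is that several of these new curves can be identified (up to isotopy on $\Sigma_3$) with standard curves on the chain diagram: for example, $x_1, x_2$ are separating curves in $\Sigma_L$ bounding $\Sigma_0^3 \cup_a \Sigma_R$ on one side, while the interior curve of $P_\pm$ separating the boundary pair $\{c_1,c_3\}$ from $\{c_5,c_7\}$ cobounds a $4$-holed sphere with a pair of parallel copies of $c_1$ (or of $c_7$), producing additional lantern configurations. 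A careful scheduling of lanterns of the three primitive families followed by three such secondary lanterns yields the full list of seven substitutions.

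The main obstacle is the explicit identification of the interior curves at each stage and the verification that each of the seven $4$-holed spheres is genuinely embedded in $\Sigma_3$. In practice this reduces to a picture calculation on the chain diagram of $\Sigma_3$ from Figures~\ref{F:curves1} and \ref{vanishingcycles}: one fixes the order of the primitive substitutions, reads off each set of interior curves from the chosen embedded $\Sigma_0^4$, tracks their isotopy classes through subsequent Hurwitz reorderings, and then reads off the three secondary $\Sigma_0^4$'s directly from the picture.
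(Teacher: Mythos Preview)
Your proposal is a plan, not a proof: you correctly identify the relevant primitive lantern configurations and correctly observe that secondary ones are needed, but you never actually exhibit seven lanterns that can be performed in sequence. The phrases ``a careful scheduling \ldots\ yields the full list'' and ``in practice this reduces to a picture calculation'' are precisely where the content of the lemma lies, and you have not supplied it.

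The essential point you are missing is that the interior curves of certain lanterns coincide (up to isotopy) with curves already in the original list, so that lanterns feed back into one another. Concretely, one of the interior curves of the lantern on $\{c_1,c_3,c_5,c_7\}$ (your $P_+$) is $a$ itself, and one interior curve of the other copy (your $P_-$) is $a'$; similarly the lantern on $\{a,a',c_1,c_1\}$ produces $c_3$ among its interior curves, and the mirror lantern on $\{a,a',c_7,c_7\}$ produces $c_5$. This recycling is what makes the count close. The paper carries this out explicitly: after one $\{a,a',c_1,c_1\}$-lantern and three $\{c_1,c_3,c_5,c_7\}$-lanterns, enough copies of $a,a'$ have been regenerated to apply the $\{a,a',c_7,c_7\}$-lantern, then one more $\{c_1,c_3,c_5,c_7\}$-lantern, and finally a seventh lantern on $\{a,a,s,s'\}$ where $s,s'$ are the separating interior curves produced by the first and fifth lanterns. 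Your vague description of the secondary lanterns (``the interior curve of $P_\pm$ separating $\{c_1,c_3\}$ from $\{c_5,c_7\}$ cobounds a $4$-holed sphere with a pair of parallel copies of $c_1$'') does not capture this mechanism.

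A second gap: your claim that ``Hurwitz moves amount to free reorderings'' is only valid before any substitution is made. After substitutions the new interior curves need not be disjoint from each other, and the paper in fact uses a nontrivial Hurwitz equivalence $t_a t_y t_z \sim t_y t_z t_a$ (justified via the lantern relation, not by disjointness of $a$ from $y,z$) to bring the factors into position for the final lantern.
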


\begin{figure}[hbt]
 \centering
     \includegraphics[width=8cm]{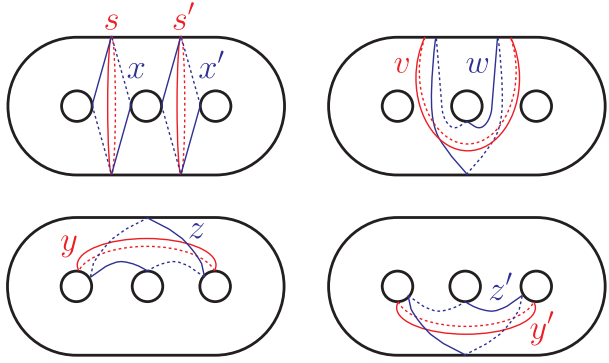}
     \caption{The curves in a genus--$3$ surface.}
     \label{genus3}
\end{figure}
\begin{proof}
{
Let $x,x^\prime,y,y^\prime,z,z^\prime,w,s,s^\prime,v$ be the curves given in $\Sigma_3$ as in Figure\ref{genus3}. 
Note that $x,x^\prime,y,y^\prime,z,z^\prime,w$ are non-separating and $s,s^\prime,v$ are separating. 
We have the following lantern relators $L_1,L_1^\prime,L_2,L_2^\prime$ and $L_3$:
\begin{align*}
&L_1 = t_{c_3}t_st_x t_{a}^{-1}t_{c_1}^{-1}t_{c_1}^{-1}t_{a^\prime}^{-1},& 
L_1^\prime = t_{c_5}t_{s^\prime}t_{x^\prime} t_{c_7}^{-1}t_{a}^{-1}t_{a^\prime}^{-1}t_{c_7}^{-1}, \\
&L_2 = t_at_yt_z t_{c_1}^{-1}t_{c_5}^{-1}t_{c_3}^{-1}t_{c_7}^{-1},
&L_2^\prime = t_{a^\prime}t_{y^\prime}t_{z^\prime} t_{c_1}^{-1}t_{c_5}^{-1}t_{c_3}^{-1}t_{c_7}^{-1}, \\
&L_3 =t_{a^\prime}t_vt_w t_a^{-1}t_s^{-1}t_{s^\prime}^{-1}t_a^{-1}.
\end{align*}
}

For $W_1$ and $W_2$ any two products of Dehn twists in $\Gamma_g$, we will write $W_1 \xrightarrow[]{L} W_2$ when $W_2$ is obtained by applying a lantern substitution to $W_1$ using the lantern relator  $L$. 
Since 
\[t_a^2 t_{a^\prime}^2 t_{c_1}^6 t_{c_3}^3 t_{c_5}^3 t_{c_7}^6 \sim t_{c_1}^4 t_{c_3}^3 t_{c_5}^3 t_{c_7}^4 \cdot t_{c_7}t_{a^\prime}t_{a}t_{c_7} \cdot t_{a^\prime}t_{c_1}t_{c_1}t_{a},\]
{
applying $L_1$-substitutions gives 
\begin{align*}
t_{c_1}^4 t_{c_3}^3 t_{c_5}^3 t_{c_7}^4  \cdot t_{c_7}t_{a^\prime}t_{a}t_{c_7} \cdot t_{a^\prime}t_{c_1}t_{c_1}t_{a} &\xrightarrow[]{L_1} t_{c_1}^4 t_{c_3}^3 t_{c_5}^3 t_{c_7}^4 \cdot t_{c_7}t_{a^\prime}t_{a}t_{c_7} \cdot t_{c_3}t_st_x. \end{align*}
Moreover, because
\[
t_{c_1}^4 t_{c_3}^3t_{c_5}^3 t_{c_7}^4 \cdot t_{c_7}t_{a^\prime}t_{a}t_{c_7} \cdot t_{c_3}t_st_x \sim (t_{c_7} t_{c_5} t_{c_3} t_{c_1})^3\cdot t_{c_1}t_{c_3}t_{c_7} \cdot t_{c_7}t_{a^\prime}t_{a}t_{c_7} \cdot t_st_x,
\]
by applying $L_2$-substitutions twice and $L_2^\prime$-substitution once we obtain
\begin{align*}
& (t_{c_7} t_{c_5} t_{c_3} t_{c_1})^3\cdot t_{c_1}t_{c_3}t_{c_7} \cdot t_{c_7}t_{a^\prime}t_{a}t_{c_7} \cdot t_st_x \\ \xrightarrow[]{L_2,L_2,L_2^\prime}& (t_at_yt_z)^2(t_{a^\prime}t_{y^\prime}t_{z^\prime}) \cdot t_{c_1}t_{c_3}t_{c_7} \cdot t_{c_7}t_{a^\prime}t_{a}t_{c_7} \cdot t_st_x. 
\end{align*}
We can further apply $L_1^\prime$ and $L_2^\prime$-substitutions as follows:
\begin{align*}
&(t_at_yt_z)^2(t_{a^\prime}t_{y^\prime}t_{z^\prime}) \cdot t_{c_1}t_{c_3}t_{c_7} \cdot t_{c_7}t_{a^\prime}t_{a}t_{c_7} \cdot t_st_x \\
\xrightarrow[]{L_1^\prime} &
(t_at_yt_z)^2(t_{a^\prime}t_{y^\prime}t_{z^\prime}) \cdot t_{c_1}t_{c_3}t_{c_7} \cdot t_{c_5}t_{s^\prime}t_{x^\prime} \cdot t_st_x \\
\xrightarrow[]{L_2^\prime} &
(t_at_yt_z)^2(t_{a^\prime}t_{y^\prime}t_{z^\prime})^2 \cdot t_{s^\prime}t_{x^\prime} \cdot t_st_x. 
\end{align*}
Here, note that $t_a t_y t_z \sim t_y t_z t_a$, which follows from the lantern relation $t_at_yt_z=t_{c_1}t_{c_3}t_{c_5}t_{c_7}$, as $a$ is disjoint from $c_1,c_3,c_5,c_7$. 
Therefore, we may further rewrite the product above as
\begin{align*}
&(t_at_yt_z)^2(t_{a^\prime}t_{y^\prime}t_{z^\prime})^2 \cdot t_{s^\prime}t_{x^\prime} \cdot t_st_x \\
 \sim& (t_yt_z)^2(t_{a^\prime}t_{y^\prime}t_{z^\prime})^2 \cdot t_a^2 t_st_{s^\prime} \cdot t_xt_{x^\prime}. 
\end{align*}
Finally, by applying $L_3$ we obtain 
\begin{align*}
(t_yt_z)^2(t_{a^\prime}t_{y^\prime}t_{z^\prime})^2 \cdot t_a^2 t_st_{s^\prime} \cdot t_xt_{x^\prime} \xrightarrow[]{L_3} (t_yt_z)^2(t_{a^\prime}t_{y^\prime}t_{z^\prime})^2 \cdot t_{a^\prime} t_vt_w \cdot t_xt_{x^\prime}.
\end{align*}
This finishes the proof. }
\end{proof}

The proof of the theorem will now follow from applying the above lemma to the positive factorization for the Lefschetz fibration $X_3(1)$ we obtained by unchaining.

\begin{proof}[Proof of Theorem~\ref{exotic}]
Let $(X,f)$ be the genus--$3$ Lefschetz fibration $(X_3(1), f_3(1))$ on $4$ times blow-up of a symplectic Calabi-Yau homotopy $\K$--surface, whose monodromy is: 
\begin{align*}
t_{\delta_2} t_{\delta_1} t_{\delta_2'} t_{\delta_1'} =  t_{x_2} t_{x_1} t_{x_2^\prime} t_{x_1^\prime} (t_{c_1} t_{c_2} t_{c_3})^8 t_{d_4} t_{d_5} t_{d_6} t_{d_7} t_{e_7} t_{e_6} t_{e_5} t_{e_4}. 
\end{align*}
We will show that this monodromy factorization is Hurwitz equivalent to a positive factorization which contains the product in Lemma~\ref{sevenLS}. 

We can check at once that $t_{d_4}t_{d_5}t_{d_6}t_{d_7}(c_{i+4})=c_{i}$ for $i=1,2,3$.
This gives 
\begin{align*}
t_{c_1}t_{c_2}t_{c_3} \cdot t_{d_4}t_{d_5}t_{d_6}t_{d_7} \sim  t_{d_4}t_{d_5}t_{d_6}t_{d_7} \cdot t_{c_5}t_{c_6}t_{c_7}. 
\end{align*}
Therefore, in $\Gamma_3$, the following relation holds
\begin{align}
1 &= t_a^2 t_{a^\prime}^2 (t_{c_1} t_{c_2} t_{c_3})^8 t_{d_4} t_{d_5} t_{d_6} t_{d_7} t_{e_7} t_{e_6} t_{e_5} t_{e_4} \label{genus3case} \\
&\sim  t_a^2 t_{a^\prime}^2 (t_{c_1} t_{c_2} t_{c_3})^4  t_{d_4} t_{d_5} t_{d_6} t_{d_7} (t_{c_5} t_{c_6} t_{c_7})^4 t_{e_7} t_{e_6} t_{e_5} t_{e_4}.  \notag
\end{align}
Here, using the relation (\ref{hurwitz2}) we obtain 
\begin{align*}
t_{c_1}t_{c_2}t_{c_3} \cdot t_{c_1}t_{c_2}t_{c_3} \cdot t_{c_1}t_{c_2}t_{c_3} \cdot t_{c_1}t_{c_2}t_{c_3} &\sim t_{c_1} \cdot t_{c_1}t_{c_2}t_{c_3} \cdot t_{c_1}t_{c_2} \cdot t_{c_1}t_{c_2}t_{c_3} \cdot t_{c_1}t_{c_2}t_{c_3} \\
&\sim t_{c_1} \cdot t_{c_1}t_{c_2}t_{c_3} \cdot t_{c_1} \cdot t_{c_1}t_{c_2}t_{c_3} \cdot t_{c_1} \cdot t_{c_1}t_{c_2}t_{c_3}. 
\end{align*}
That is, we have
\begin{align*}
(t_{c_1}t_{c_2}t_{c_3})^4 \sim (t_{c_1}^2t_{c_2}t_{c_3})^3 \sim t_{c_1}^6 t_{c_3}^3 t_{t_{c_1}^{-4}t_{c_3}^{-3}(c_2)} t_{t_{c_1}^{-2}t_{c_3}^{-2}(c_2)} t_{t_{c_3}^{-1}(c_2)}. 
\end{align*}
Similarly, we get
\begin{align*}
(t_{c_5}t_{c_6}t_{c_7})^4 \sim (t_{c_5}t_{c_6}t_{c_7}^2)^3 \sim t_{t_{c_5}(c_6)} t_{t_{c_5}^2t_{c_7}^2(c_6)} t_{t_{c_5}^3t_{c_7}^4(c_6)} t_{c_5}^3 t_{c_7}^6. 
\end{align*}
The above arguments and cyclic permutations then give 
\begin{align*}
1 &= t_a^2 t_{a^\prime}^2 (t_{c_1} t_{c_2} t_{c_3})^8 t_{d_4} t_{d_5} t_{d_6} t_{d_7} t_{e_7} t_{e_6} t_{e_5} t_{e_4}  \\
&= t_a^2 t_{a^\prime}^2 t_{c_1}^6 t_{c_3}^3 t_{c_5}^3 t_{c_7}^6 \cdot t_{t_{c_1}^{-4}t_{c_3}^{-3}(c_2)} t_{t_{c_1}^{-2}t_{c_3}^{-2}(c_2)} t_{t_{c_3}^{-1}(c_2)} \cdot t_{d_4}t_{d_5}t_{d_6}t_{d_7} \cdot T_2,
\end{align*}
where $T_2$ is a product of (seven) positive Dehn twists along nonseparating curves. 
{
As we applied in the proof of Lemma~\ref{sevenLS}}, we can apply $7$ consecutive Lantern substitutions to the monodromy of $(X, f)$, which yield new symplectic genus--$3$ Lefschetz fibrations $(X_k, f_k)$, for  $k=1, \ldots 7$, where $X_k$ is obtained from $X$ by rationally blowing-down $k$ symplectic $(-4)$--spheres. 
{(Note that we apply the $7$ Lantern substitutions in the same order as we did in the proof of Lemma~\ref{sevenLS}.)}

Let us first determine the homeomorphism type of each $X_k$. From the algebraic topological invariants of $(X,f)=(X_3(1), f_3(1))$ we calculated earlier, we deduce that $\eu(X_k)=28-k$ and $\sigma(X_k)=-20+k$. Moreover, we claim that $\pi_1(X_k)=1$ for each $k=1,2,\ldots,7$. 
When $1 \leq k \leq 5$, we see that the monodromy of $f_k$ contains Dehn twists $t_{d_4},t_{d_5},t_{d_6},t_{d_7},t_{c_1},t_{c_2},t_a$. These Dehn twist curves alone give enough relations to kill the fundamental group as in the proof of Theorem~\ref{topXg}. If $k=6,7$, the monodromy of $f_k$ contains the Dehn twists $t_{a^\prime},t_x,t_{t_{c_3}^{-1}(c_2)},t_{d_4},t_{d_5},t_{d_6},t_{d_7}$.  Since $x=a_1a_2=1$, $t_{c_3}^{-1}(c_2)=b_1^{-1}a_2^{-1}b_1a_1b_1^{-1}=1$ and $a^\prime = [a_1,b_1]a_2$, we again have $\pi_1(X_j)=1$. None of these manifolds have even intersection forms, because they all contain reducible fibers, which always yield surfaces of odd self-intersection. (And none other than $X_4$ has signature divisible by $16$ anyway.) Hence, by Freedman's theorem, we see that each $X_k$ is homeomorphic to $3\CP\sharp (23-k)\CPb$.

On the other hand, the standard $4$--manifolds $3\CP\sharp (23-k)\CPb$ do not admit symplectic structures: any $4$--manifold that is a smooth connected sum of two \mbox{$4$--manifolds} with $b_2^+>0$ has vanishing Seiberg-Witten invariants, whereas by Taubes, any symplectic $4$--manifold with $b^+>0$ has non-trivial Seiberg-Witten invariants. So, none of the $X_k$ is diffeomorphic to $3\, \CP\sharp \,  (23-k)\, \CPb$. 

The more interesting cases will be for $k=4, 5, 6,7$. (When $k \leq 3$, $c_1^2(X_k)=-4+k <0$, whereas by Taubes' seminal work, a minimal symplectic $4$--manifold with $b_2^+>0$ always has $c_1^2 \geq 0$.) The rest of our proof is devoted to showing that each $X_k$, for $k=4,5,6,7$, is indeed minimal. To do so, we will need to go over our construction one more time, this  time paying attention to how exceptional spheres intersect the spheres cobounded by the $4$ lantern curves in each lantern substitution. (After a small perturbation, each one of these spheres can be contained in a singular fiber with multiple nodes, and it is this symplectic $(-4)$--sphere one rationally blowdowns in the course of the lantern substitution.) Below we will simply refer to these as \emph{lantern spheres}, and denote them using the corresponding lantern substitutions (while pointing out any potential ambiguities).

The initial genus--$3$ Lefschetz fibration $(X,f)$ we started with, where $X$ is the $4$ times blow-up of a symplectic Calabi-Yau homotopy $\K$ surface we will simply denote by $X'$, had a total of $4$ exceptional sections $S_1, S_2, S'_1, S'_2$, which hit the fibers at the marked points obtained by collapsing the boundary components $\delta_1, \delta_2, \delta'_1, \delta'_2$ in Figure~\ref{sectioncurves}. The reader may want to refer to this figure and Figure~\ref{F:curves1} for the rest of our discussion, where the curves $x_1, x_2$ and $x'_1, x'_2$ are the lifts of $a$ and $a'$ in our lantern substitutions. 

{
The first lantern sphere $L_1$ we rationally blowdown can be seen to intersect once with $S_2, S'_2$. 
So by \cite[Lemma 5.1]{Gompf}, $X_1$ is still diffeomorhic to $3$ times blow-up of $X'$. 
Moreover, by \cite[Lemma 6.1]{BaykurHayano}, the substitution, when the marked points are taken into account, correspond to a braiding lantern substitution, which turns the pair of exceptional sections the lantern sphere intersects to an exceptional bisection of the resulting fibration. 
Denote this exceptional bisection by $S_{22'}$. 
Let us denote the next three lantern spheres, two of which correspond to the lantern substitution $L_2$ while one of which corresponds to $L_2^\prime$, as $L_2(1)$, $L_2^\prime(2)$ and $L_2(3)$ in the order they will be blown down. 


The next lantern sphere $L_2(1)$ intersects the exceptional spheres $S_{1}$ and $S_{22'}$ only once, and therefore we once again conclude that $X_2$ is diffeomorphic to a twice blow-up of $X'$, whereas the spheres $S_1$ and $S_{22'}$ now descend to an exceptional triple-section $S_{122'}$. 
Similarly, we can verify that $X_3$ is diffeomorphic to a single blow-up of $X'$ and the exceptional sphere, denoted by $S_{11'22'}$, becomes a quadruple-section. 
Now, the lantern sphere $L_2(3)$ intersects $S_{11'22'}$ algebraically and geometrically twice.}
So by \cite[Lemma~1.1]{Dorfmeister1}, the symplectic $4$--manifold $X_4$ obtained by rationally blowing down $L_2(3)$ in $X_3$ is minimal, and moreover, the symplectic $4$--manifolds $X_5, X_6, X_7$ we obtain by further rational blowdowns along the $(-4)$--spheres remain to be minimal. Lastly, for the satement of the theorem, we only record the minimal cases (when $k=4,5,6,7$) by resetting the index of $X_j$ as $j=k-4$.
\end{proof}

\smallskip
\begin{remark} \label{whereiscan}
With all the additional information we have on the exceptional (multi)sections of the Lefschetz fibrations $(X_k, f_k)$, we can easily calculate the Seiberg-Witten invariants of $X_k$, and moreover, represent their basic classes as multisections. The symplectic Calabi-Yau surface $X'$ has trivial canonical class, which is its only Seiberg-Witten basic class \cite{MorganSzabo}. Therefore the Seiberg-Witten invariants of each $X_k \cong X' \#  (4-k) \CPb$, for $k=0, 1,2,3$, are determined by the blow-up formula \cite{Witten}, and the $2^{4-k}$ basic classes of $X_0:=X$, $X_1$, $X_2$ and $X_3$ are all represented by the collection of exceptional (multi)sections $\{S_1, S'_1, S_2, S'_2\}$, {$\{S_{1}, S_{1'}, S'_{22'}\}$, $\{S_{1}, S_{122'}\}$, and $\{S_{1122'}\}$}, respectively, where each sphere is taken with either orientation. In particular, 
$X_3 \cong X' \# \CPb$ has only two basic classes: $\pm E$, where $E$ is  represented by $S_{11'22'}$. In turn, by the Seiberg-Witten formula for rational blowdown 
\cite{FSrationalblowdown}, the only basic class that descends from $X_3$ to $X_4$ is $E$, since it intersects the $(-4)$--sphere of the rational blowdown exactly $4$ times, and the class it descends to, up to sign, is the only basic class of $X_4$.  Using the machinery of \cite{BaykurHayano}, we can conclude that this new class is represented by a quadruple-section (represented by the same $4$ marked points on the fiber), which is now a torus $R_0$ of self-intersection $0$. Iterating the same arguments, we see that each $X_{4+j}$, for $j=0, 1, 2, 3$,  has only one basic class up to sign, which is represented by a quadruple-section $R_j$ of genus $j$, with $R_j^2= j$. By Taubes, the multisection $R_j$ represents the symplectic canonical class (and its Seiberg-Witten value is $+1$).  
\end{remark}

\medskip
\subsection{Inequivalent pencils with explicit monodromies}\label{Sec:further2} \

Here we will construct inequivalent relatively minimal genus--$g$ Lefschetz fibrations for each $g \geq 3$, on a family of symplectic $4$--manifolds of Kodaira dimension $-\infty, 0$ and $1$, and especially on rational surfaces. 

\begin{theorem}\label{thm:5}
For any $g\geq 3$ and $i=0,1,2,\ldots,g-1$, there are pairs of inequivalent relatively minimal  genus--$g$ Lefschetz pencils $(Y_g(i), h^j_g(i))$, $j=1,2$, and inequivalent Lefschetz fibrations on their blow-ups. For any $g\geq 3$, there are pairs of  inequivalent  relatively minimal genus--$g$ Lefschetz pencils on once blown-up elliptic surface $E(1) \, \sharp \, \CPb \cong \CP\, \sharp \, 10\, \CPb$, with different number of reducible fibers.
\end{theorem}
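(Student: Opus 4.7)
The plan is to take $(Y_g(i), h^1_g(i)) := (X'_g(i), f'_g(i))$ as the first family of pencils, using the explicit monodromy factorizations produced in Theorem~\ref{thm:3}. Since these pencils realize symplectic $4$--manifolds with Kodaira dimensions ranging over $-\infty$, $0$ and $1$ by Theorem~\ref{T:Kodaira dim X_g(i)}, the resulting family automatically achieves the Kodaira dimension spread promised in the statement.

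To produce the second pencils $(Y_g(i), h^2_g(i))$, I would apply further monodromy substitutions to the positive factorization of $f'_g(i)$ that preserve the underlying symplectic $4$--manifold while genuinely changing the topology of the pencil. The natural candidate is the braiding lantern substitution of \cite[Lemma 5.1]{BaykurHayano}, carried out on a lantern configuration visible inside a subword $(t_{c_1}t_{c_2}t_{c_3})^{4(g-i)}$ (or its even--$g$ analogue) together with a pair $t_{x_k} t_{x_k'}$ of Dehn twists along exceptional-section curves; after a preliminary Hurwitz rearrangement this is always doable, exactly as in the proof of Theorem~\ref{exotic}. By \cite[Lemma 5.1]{Gompf}, whenever the associated symplectic $(-4)$--sphere meets an exceptional sphere transversely once, the corresponding rational blowdown is diffeomorphic to a regular blowdown, so the braiding lantern substitution leaves $Y_g(i)$ unchanged while converting a pair of disjoint sections into an exceptional bisection and introducing a separating vanishing cycle. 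This yields a second positive factorization on the same $4$--manifold $Y_g(i)$.

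To distinguish $h^1_g(i)$ from $h^2_g(i)$ as pencils up to equivalence, the key invariant is the number of reducible singular fibers, which one reads off from a positive factorization as the number of separating vanishing cycles. This integer is clearly invariant under self-diffeomorphisms of $Y_g(i)$ and of $\CL$ commuting the pencil maps, and by construction each braiding lantern substitution strictly changes it. Capping off boundary components of the same factorizations then yields inequivalent genus--$g$ Lefschetz fibrations on the corresponding blow-ups, still distinguished by the reducible fiber count. For the final assertion on $E(1) \, \sharp \, \CPb$, I would specialize to the case $X'_g(g-1) \cong E(1)$ from Theorem~\ref{T:Kodaira dim X_g(i)} and carry out the same substitution procedure so that both resulting pencils realize $E(1) \, \sharp \, \CPb \cong \CP \, \sharp \, 10\, \CPb$ but differ in their reducible fiber count by construction.

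The main difficulty is the bookkeeping needed to ensure that the chosen substitutions really preserve the diffeomorphism type of $Y_g(i)$: this hinges on exhibiting, for each lantern $(-4)$--sphere that appears, an exceptional sphere meeting it transversely once, which in turn requires the detailed exceptional multisection data coming from our explicit lifts to $\Gamma_g^{2(i+1)}$. A secondary technical point is to verify that every resulting pencil remains \emph{relatively minimal}, i.e., no exceptional sphere is contained in a singular fiber; this can be checked from the monodromy data combined with Propositions~\ref{T:topinv_unchaining1} and \ref{T:topinv_unchaining2}, which control the Euler characteristic, signature and fundamental group throughout the process.
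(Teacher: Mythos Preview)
The main gap in your proposal is that the two pencils $h^1_g(i)$ and $h^2_g(i)$ you construct do not live on the same $4$--manifold. You take $h^1_g(i) = f'_g(i)$, which is a pencil on $X'_g(i)$ with $2(i+1)$ base points. But a (braiding) lantern substitution is a rational blowdown; even when Gompf's lemma identifies it with an ordinary blowdown, it \emph{changes} the total space rather than preserving it. Concretely, after the substitution you describe, the resulting pencil has only $2i$ base points and total space $X'_g(i)\,\sharp\,\CPb$, not $X'_g(i)$. (Your own final paragraph implicitly concedes this when you write that ``both resulting pencils realize $E(1)\,\sharp\,\CPb$'', contradicting your choice $h^1_g(g-1)=f'_g(g-1)$ on $E(1)$.) So you have produced pencils on two different manifolds, and there is nothing to compare.

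The paper's proof avoids this by performing a lantern substitution to obtain \emph{both} pencils from $f'_g(i)$. One substitution, on the subword $t_{c_1}^2 t_{x_1} t_{x_1'}$, yields a separating vanishing cycle $y$; a second substitution, on the subword $t_{c_1} t_{x_1} t_{x_1''} t_{c_5}$---visible only after the Hurwitz-equivalent rewriting of Lemma~\ref{lemA}---yields only nonseparating curves. In each case the lantern $(-4)$--sphere meets an exceptional section once, so both resulting pencils have total space $Y_g(i)=X'_g(i)\,\sharp\,\CPb$, and they are then distinguished by the number of reducible fibers ($1$ versus $0$). Note also that your claim that the braiding lantern automatically ``introduces a separating vanishing cycle'' is unwarranted: whether one of the three new Dehn-twist curves is separating depends on the specific lantern configuration, and the entire point of Lemma~\ref{lemA} is to arrange a configuration in which none of them is.
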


These inequivalent pencils and fibrations will be produced following the recipe given in \cite{BaykurHayano}, and will be distinguished by the number of their reducible fibers. Therefore they are even inequivalent up to  \emph{fibered Luttinger surgeries} (see 
\cite{AurouxLuttinger, BaykurInequivalentLF, BaykurInequivalentRR}), which never change the topological type of fibers.


We will need the following lemma:

\begin{lemma}\label{lemA}
Let $x_1^{\prime\prime}$ be the simple closed curve on $\Sigma_g^{2(i+1)}$ as in Figure\ref{curvezeta1}, and let $D_{g}^\prime:=t_{d_6}t_{d_7}\cdots t_{d_{2g+1}}$ and $E_g^\prime := t_{e_{2g+1}} \cdots t_{e_7}t_{e_6}$. 
For each $i=0,1,2,\ldots,g-1$, the following positive factorization is Hurwitz equivalent to the monodromy factorization of the pencil $(X'_g(i), f'_g(i))$ given in Theorem~\ref{thm:3}: 

\noindent when $g$ is odd, 
\begin{align*}
&t_{\delta_{i+1}} \cdots t_{\delta_2} t_{\delta_1} t_{\delta_{i+1}^\prime} \cdots t_{\delta_2^\prime} t_{\delta_1^\prime} \\
&= \prod_{k=i+2}^{2}  t_{x_k} t_{x_k^\prime} \cdot t_{c_2} t_{c_3} (t_{c_1} t_{c_2} t_{c_3})^{4(g-1-i)+1} (t_{c_1} t_{c_2} t_{c_3})^2 \cdot t_{x_1} t_{d_4} t_{d_5} t_{x_1^{\prime\prime}} \cdot D_g^\prime E_g^\prime \cdot t_{c_5} \cdot  t_{e_5} t_{e_4}
\end{align*}
when $g$ is even, 
\begin{align*}
&t_{\delta_{i+1}} \cdots t_{\delta_2} t_{\delta_1} t_{\delta_{i+1}^\prime} \cdots t_{\delta_2^\prime} t_{\delta_1^\prime} \\
&= \prod_{k=i+2}^{2}  t_{x_k} t_{x_k^\prime} \cdot t_{c_2} t_{c_3} (t_{c_1} t_{c_2} t_{c_3})^{4(g-1-i)+1} (t_{c_3} t_{c_2} t_{c_1})^2 \cdot  t_{x_1} t_{d_4} t_{d_5} t_{x_1^{\prime\prime}} \cdot D_g^\prime E_g^\prime \cdot t_{c_5} \cdot  t_{e_5} t_{e_4}. 
\end{align*}
\end{lemma}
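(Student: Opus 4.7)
The plan is to derive the stated factorization from the one in Theorem~\ref{thm:3} by a sequence of Hurwitz moves, using only the braid relators \eqref{hurwitz1}--\eqref{hurwitz2} of Section~\ref{hurwitz} and the commutativity of Dehn twists along disjoint curves. The odd and even $g$ cases follow essentially the same pattern, so I would treat the odd case in detail and indicate the minor bookkeeping difference at the end.

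Counting twist-by-twist, the only net change between the two factorizations is that one $t_{c_1}$ is removed from the chain block and a $t_{c_5}$ appears in the tail, while the section twist $t_{x_1^\prime}$ is replaced by the new twist $t_{x_1^{\prime\prime}}$ inside the tail subword $t_{x_1}\,t_{d_4}\,t_{d_5}\,t_{x_1^{\prime\prime}}$. The curve $x_1^{\prime\prime}$ is defined in Figure~\ref{curvezeta1} precisely so as to make this local substitution realizable by Hurwitz moves in the full factorization.

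I would execute the argument in three stages. Stage~1: peel one copy of $t_{c_1}t_{c_2}t_{c_3}$ off the left end of the chain block and split it as $t_{c_1}\cdot t_{c_2}t_{c_3}$. The residual $t_{c_2}t_{c_3}$ is then commuted through the section block (permissible because $c_2, c_3$ are disjoint from each of $x_k, x_k^\prime$ with $k\geq 2$ and from $a, a^\prime, b, b^\prime$), producing the prefix $\prod_{k=i+2}^{2}t_{x_k}t_{x_k^\prime}\cdot t_{c_2}t_{c_3}$ displayed in the statement. For even $g$, Lemma~\ref{lem1-3} provides the analogous peeling inside the block $(t_{c_3}t_{c_2}t_{c_1})^{2}$ appearing on the right of the chain. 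Stage~2: transport the freed $t_{c_1}$ and the section twist $t_{x_1^\prime}$ into the tail by repeated Hurwitz moves; because $c_1$ is disjoint from every $c_j$ with $j\geq 3$ (and hence from each $d_j, e_j$ with $j\geq 6$), the transport past the inner part of $D_g^\prime E_g^\prime$ amounts to a sequence of commutations. Stage~3: inside the tail, a short sequence of Hurwitz moves on the subword $t_{x_1^\prime}\,t_{c_1}\,t_{d_4}\,t_{d_5}$, together with the section twist $t_{x_1}$ floated in from the left, reshapes it into $t_{x_1}\,t_{d_4}\,t_{d_5}\,t_{x_1^{\prime\prime}}\cdot t_{c_5}$, after which the newly produced $t_{c_5}$ commutes through the rest of $D_g^\prime E_g^\prime$ to its final position between $D_g^\prime E_g^\prime$ and $t_{e_5}t_{e_4}$.

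The main obstacle is the local computation in Stage~3: one must verify that the chosen sequence of Hurwitz moves on $t_{x_1^\prime}t_{c_1}t_{d_4}t_{d_5}$ reorders it into $t_{d_4}t_{d_5}t_{x_1^{\prime\prime}}t_{c_5}$ with $x_1^{\prime\prime}$ the specific curve drawn in Figure~\ref{curvezeta1}. This reduces to tracking how $x_1^\prime$ and $c_1$ are moved under the relevant Hurwitz conjugations and matching the resulting curves against the picture; a direct inductive computation in the spirit of Lemma~\ref{lem1-1} handles the transport of $t_{c_1}$ across $D_g^\prime$. Once these two verifications are made, all remaining steps reduce to routine commutations of disjoint Dehn twists and applications of the braid relators already exploited in Section~\ref{hurwitz}.
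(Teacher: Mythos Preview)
Your three–stage outline has two concrete gaps that the paper's argument handles differently.

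First, Stage~2 does not explain how the freed $t_{c_1}$ crosses the remaining chain block. Since $c_1$ meets $c_2$, the twist $t_{c_1}$ cannot be commuted through $(t_{c_1}t_{c_2}t_{c_3})^{4(g-i)-1}$, and any genuine Hurwitz transport would alter those $c_2$–twists. The paper never attempts this: it moves $t_{c_1}$ all the way to the \emph{leftmost} slot (past every $t_{x_k},t_{x_k'}$, which is legitimate since $c_1,c_2,c_3$ are disjoint from all of them), and then uses a \emph{cyclic permutation}---valid because the product is the central boundary multi-twist---to send that $t_{c_1}$ to the far right, after all of $D_gE_g$. No passage through the chain block is needed.

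Second, your Stage~3 mechanism is not the right one. The conversion $t_{c_1}\rightsquigarrow t_{c_5}$ does \emph{not} come from $d_4,d_5$; the paper uses $c_5=(t_{e_5}t_{e_4})(c_1)$, giving the single Hurwitz move $t_{e_5}t_{e_4}\cdot t_{c_1}\sim t_{c_5}\cdot t_{e_5}t_{e_4}$ at the very end of the word, so $t_{c_5}$ lands directly in its final slot and is never transported through $D_g'E_g'$. Your alternative would require $t_{c_5}$ to commute past $D_g'E_g'$, but that fails: with $d_j=c_{j-3}+c_{j-2}+c_{j-1}+c_j$ in $H_1(\Sigma;\mathbb{Z}/2\mathbb{Z})$ one has $\hat\imath(d_8,c_5)\equiv\hat\imath(d_9,c_5)\equiv 1\pmod 2$, so for $g\geq 4$ the curve $c_5$ genuinely meets curves in $D_g'$. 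The $x_1''$ part of your Stage~3 is correct and matches the paper exactly: $x_1''=(t_{d_4}t_{d_5})^{-1}(x_1')$ gives $t_{x_1'}\,t_{d_4}t_{d_5}\sim t_{d_4}t_{d_5}\,t_{x_1''}$.

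If you replace Stage~2 by the cyclic permutation of $t_{c_1}$ from the far left to the far right, and in Stage~3 use the identity $c_5=t_{e_5}t_{e_4}(c_1)$ (together with $x_1''=(t_{d_4}t_{d_5})^{-1}(x_1')$) instead of trying to push $t_{c_5}$ through $D_g'E_g'$, you recover precisely the paper's proof.
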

\begin{figure}[hbt]
 \centering
      \includegraphics[scale=.50]{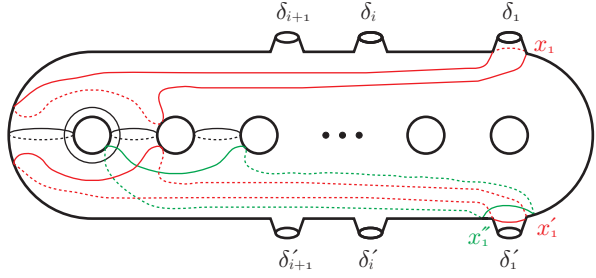}
      \caption{The curve $x_1^{\prime\prime}$ on $\Sigma_g^{2(i+1)}$.}
      \label{curvezeta1}
 \end{figure}

\begin{proof}
Let us prove the case of even $g$, and leave the odd case to the reader, the proof of which is very similar.

Note that since $c_1,c_2,c_3$ are disjoint from $x_1,\ldots,x_i$ and $x^\prime_1,\ldots,x^\prime_i$, and $x_\ell$ is disjoint from $x^\prime_j$ for any $\ell$ and $j$, we have the following relation:
\begin{align*}
&t_{x_{i+1}} \cdots t_{x_2} t_{x_1} t_{x_{i+1}^\prime} \cdots t_{x_2^\prime} t_{x_1^\prime} (t_{c_1} t_{c_2} t_{c_3})^{4(g-1-i)+2} (t_{c_3} t_{c_2} t_{c_1})^2 D_g E_g \\
&\sim t_{c_1} \cdot \prod_{k=i+1}^2 t_{x_k} t_{x_k^\prime} \cdot t_{c_2}t_{c_3} (t_{c_1} t_{c_2} t_{c_3})^{4(g-1-i)+1} (t_{c_3} t_{c_2} t_{c_1})^2 \cdot t_{x_1} t_{x_1^\prime} \cdot D_g E_g \\
&\sim \prod_{k=i+1}^2 t_{x_k} t_{x_k^\prime} \cdot t_{c_2}t_{c_3} (t_{c_1} t_{c_2} t_{c_3})^{4(g-1-i)+1} (t_{c_3} t_{c_2} t_{c_1})^2 \cdot t_{x_1} t_{x_1^\prime} \cdot D_g E_g \cdot t_{c_1}
\end{align*}
where $D_g=t_{d_4} t_{d_5} \cdots t_{d_{2g+1}}$ and $E_g=t_{e_{2g+1}} \cdots t_{e_5} t_{e_4}$. 
Here, it is easy to check that $x_1^{\prime\prime}=(t_{d_4}t_{d_5})^{-1}(x_1^\prime)$ and $c_5=(t_{e_5}t_{e_4})(c_1)$. 
This gives 
\begin{align*}
&t_{x_1^\prime} \cdot t_{d_4}t_{d_5} \sim t_{d_4}t_{d_5} \cdot t_{x_1^{\prime\prime}} \\
&t_{e_5}t_{e_4} \cdot t_{c_1} \sim t_{c_5} \cdot t_{e_5}t_{e_4}. 
\end{align*}
This completes the proof.
\end{proof}

\begin{figure}[hbt]
 \centering
      \includegraphics[scale=.47]{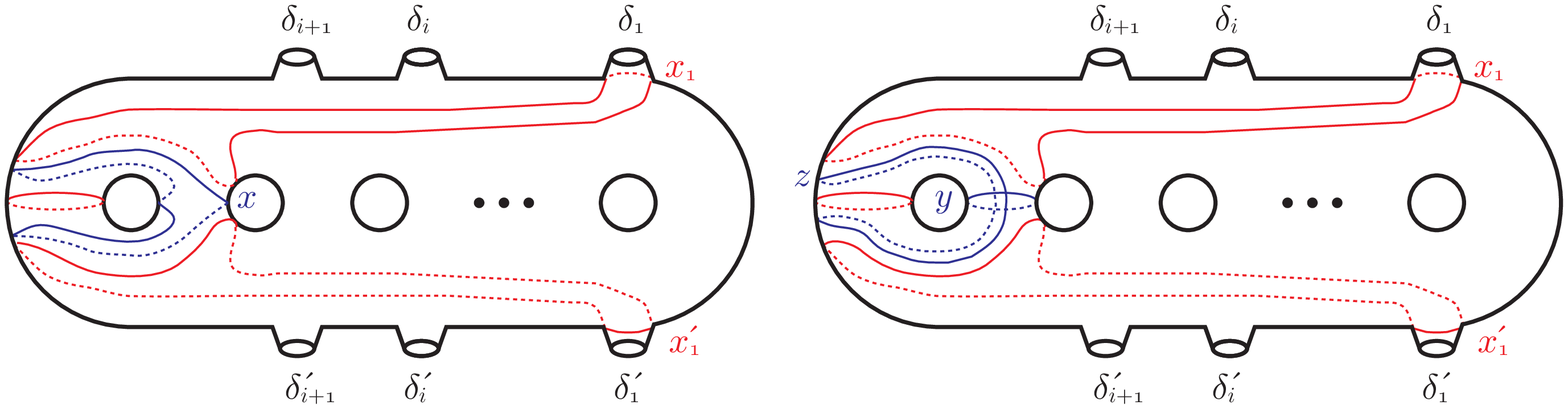}
      \caption{The curves $x,y,z$ on $\Sigma_g^{2(i+1)}$.}
      \label{curves4}
 \end{figure}

\medskip
\begin{proof}[Proof of Theorem~\ref{thm:5}] We will run our constructions for any $i=0,1,2,\ldots,g-1$. 

It is easy to see that, after Hurwitz moves, the monodromy factorization of $(X'_g(i), f_g(i))$ yields a positive factorization with the subword $t_{c_1}^2 t_{x_1}t_{x_1}^\prime$. 
Note that $x_1,x_1^\prime,\delta_1,\delta_1^\prime$ and two disjoint curves parallel to $c_1$ bound a sphere with six boundary components.  Applying the lantern substitution to this subword $t_{c_1}^2t_{x_1}t_{x_1^\prime}$ replaces it with the new subword $t_{x}t_{y}t_{z}$ in the factorization, where $x,y,z$ are the curves as in Figure~\ref{curves4}. Akin to  the proof of Theorem~\ref{T:counterex Stipsicz}, this then gives a genus--$g$ Lefschetz pencil $(Y^1_g(i), h^1_g(i))$. This pencil $h^1_g(i)$ has one reducible fiber corresponding to the Dehn twist along the separating curve $y$.

\begin{figure}[hbt]
 \centering
      \includegraphics[scale=.47]{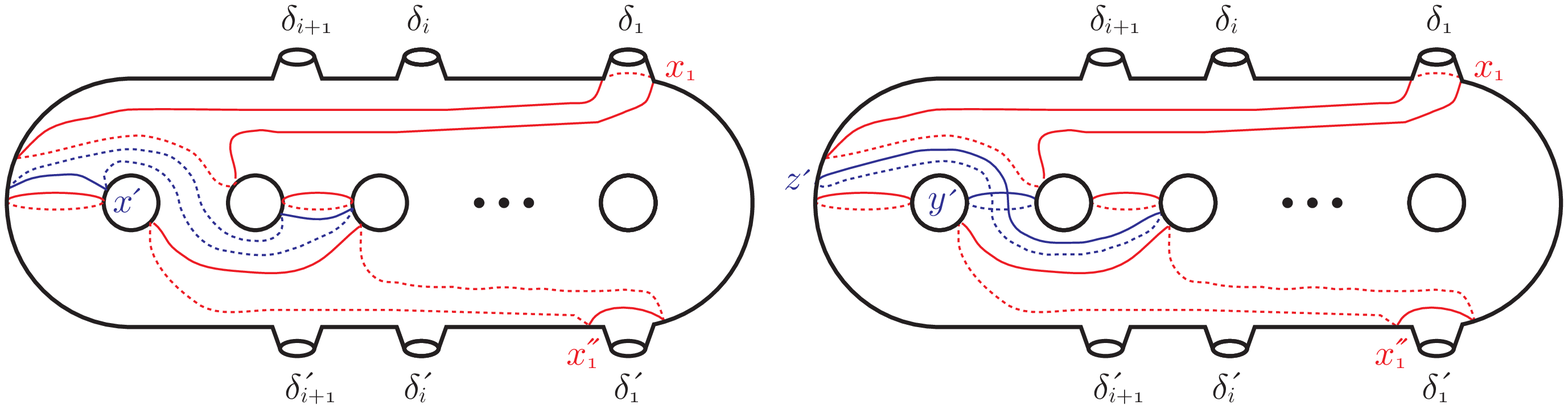}
      \caption{The curves $x^\prime,y^\prime,z^\prime$ on $\Sigma_g^{2(i+1)}$.}
      \label{curves5}
 \end{figure}

On the other hand, if we take the Hurwitz equivalent monodromy factorization of $(X'_g(i), f_g(i))$ given in Lemma~\ref{lemA}, it is again easy to see that, after further Hurwitz moves, we get a positive factorization with the subword $t_{c_1}t_{x_1}t_{x_1^{\prime\prime}}t_{c_5}$. 
Note that $c_1,x_1,x_1^{\prime\prime},c_5,\delta_1,\delta_1^\prime$ bound a sphere with six boundary components.  When we apply a braiding lantern substitution along this subword $t_{c_1} t_{x_1} t_{x_1^{\prime\prime}} t_{c_5}$, we replace it with the new subword $t_{x^\prime}t_{y^\prime}t_{z^\prime}$ in the factorization, where $x^\prime,y^\prime,z^\prime$ are the nonseparating curves in Figure~\ref{curves5}. This gives a genus--$g$ Lefschetz pencil $(Y^2_g(i), h^2_g(i))$ where  as well, where all singular fibers of $h^2_g(i)$ are irreducible.

By the arguments we employed in earlier proofs, since the lantern spheres of for these substitutions intersect an exceptional section once, we have $Y^j_g(i) \cong X_g^\prime(i) \sharp\CPb$, for each $j=1, 2$. Therefore, the two pencils $h^j_g(i)$ can be viewed on the same symplectic $4$--manifold, but have different number of reducible fibers (one versus none), which means they are not equivalent.

The examples in the last part of the theorem are derived in the particular case when $i=g-1$,  where $Y_g(g-1) \cong X_g^\prime(g-1) \sharp\CPb$, and $X'_g(g-1) \cong E(1)$ by Theorem~\ref{topXg}. 
\end{proof}

\medskip

\end{document}